\definecolor{db}{RGB}{0, 0, 130}
\definecolor{rp}{rgb}{0.25, 0, 0.75}
\definecolor{dg}{rgb}{0, 0.6, 0}
\newtheorem{theorem}{Theorem}[section]
\newtheorem{definition}{Definition}[section]
\newtheorem{proposition}[definition]{Proposition}
\newtheorem{assumption}[definition]{Assumption}
\newtheorem{lemma}[definition]{Lemma}
\newtheorem{remark}{Remark}[section]
\def\R{\mathbb{R}}
\def\E{\mathbb{E}}
\def\N{\mathbb{N}}
\def\S{\mathbb{S}}
\def\F{\mathbb{F}}
\def\P{\mathbb{P}}
\def\G{\mathbb{G}}
\def\taub{\pmb{\tau}}
\def\x{\times}
\def\o{\circ}
\def\Om{\Omega}
\def\om{\omega}
\def\Fc{\mathcal{F}}
\def\Xo{\overline{X}}
\def\eps{\varepsilon}
\def\Xh{\widehat{X}}
\def\Tc{\mathcal{T}}
\def\Cc{\mathcal{C}}
\def\Lh{\widehat{L}}
\def\mut{\widetilde{\mu}}
\def\Bc{\mathcal{B}}
\def\Gc{\mathcal{G}}
\def\Lc{\mathcal{L}}
\def\Wc{\mathcal{W}}
\def\Pc{\mathcal{P}}
\def\x {\times}
\def\Xt{\widetilde X}
\def\muh{\widehat{\mu}}
\def\muo{\overline{\mu}}
\def\taut{\tilde \tau}
\def\Gh{\widehat{G}}
\def \proof{{\noindent \bf Proof }}
\def \endproof{\hbox{ }\hfill$\Box$}
\title{On the limit theory of mean field optimal stopping with non-Markov dynamics and  common noise}
\author{
Xihao He
\footnote{Department of Mathematics, Univerisity of Michigan. hexihao@umich.edu.}
}
\date{\today}
\begin{document}

\maketitle

\begin{abstract}
    This paper focuses on a mean-field optimal stopping problem with non-Markov dynamics and common noise, inspired by Talbi, Touzi, and Zhang \cite{TalbiTouziZhang1,TalbiTouziZhang3}.
    The goal is to establish the limit theory and demonstrate the equivalence of the value functions between weak and strong formulations. The difference between the strong and weak formulations lies in the source of randomness determining the stopping time on a canonical space.
    In the strong formulation, the randomness of the stopping time originates from Brownian motions.
    In contrast, this may not necessarily be the case in the weak formulation. Additionally, a $(H)$-Hypothesis-type condition is introduced to guarantee the equivalence of the value functions.
    The limit theory encompasses the convergence of the value functions and solutions of the large population optimal stopping problem towards those of the mean-field limit, and it shows that every solution of the mean field optimal stopping problem can be approximated by solutions of the large population optimal stopping problem.
\end{abstract}

\noindent \textbf{Key words:} McKean-Vlasov SDE, optimal stopping, limit theory

\vspace{0.5em}

\noindent\textbf{MSC2010 subject classification:} 60G40, 60G44, 60G07, 93E15

\section{Introduction}

    We study a mean field optimal stopping problem with common noise, which can be described as follows:
    Let $T > 0$ be a finite horizon.
    The stopped state process $X^\tau$ is determined through a stopping time $\tau$ and a McKean-Vlasov stochastic differential equation(SDE)
    \begin{equation}\label{eq:intro_dynamics}
        X^\tau_t
        ~ = ~
        X_0 + \int_{0}^{t \wedge \tau}b(s, X^\tau_{s\wedge \cdot}, \mu_s) dt
        + \int_{0}^{t \wedge \tau}\sigma(s, X^\tau_{s\wedge \cdot}, \mu_s) dW_s
        + \int_{0}^{t \wedge \tau}\sigma_0(s, X^\tau_{s\wedge \cdot}, \mu_s) dB_s,
    \end{equation}
    where $W$, $B$ are two independent Brownion motions,
    for each $t \in [0,T]$,
    $\mu_t$ denotes the conditional law of $(X^\tau_{t\wedge \cdot}, W, \tau \wedge t)$ given the common noise $B$,
    which is also written as $\Lc(X^\tau_{t\wedge \cdot}, W, \tau \wedge t|B)$.
    Then the optimal stopping problem is formulated as
    \begin{equation*}
        \sup_{\tau}
        \E\bigg[
            \int_{0}^{T \wedge \tau}f(s,X^\tau_{s\wedge \cdot}, \mu_s) ds
            + g(\tau, X^\tau_{\cdot}, \mu_T)\bigg].
    \end{equation*}

    The McKean-Vlasov control problem has recently gained attention due to its close proximity to mean field games, as introduced in the pioneering work of Lasry and Lions \cite{LarsyLions} and Huang, Caines, and Malhamé \cite{HuangCainesMalhame}.
    For a more thorough discussion about the similarities and differences between these two theories, we refer to the work of Carmona, Delarue, and Lachapelle \cite{CarmonaDelarueLachapelle}.
    Controlled McKean-Vlasov dynamics are generally investigated by two methods: the Pontryagin maximum principle and the dynamic programming principle (DPP).
    In particular, we refer to the work of Carmona and Delarue \cite{CarmonaDelarue2015} for a comprehensive analysis of the former approach, and to Djete, Possamaï, and Tan \cite{DjetePossamaiTan} for a general DPP using abstract measurable selection arguments.

    This problem is typically justified by considering a large population optimal stopping problem that includes numerous dynamics.
    More specifically, for each positive integer $N$, we consider $N$ state processes $X^N = (X^{1,N,\taub^{N}}, \cdots, X^{N, N,\taub^{N}})$ that interact through their empirical measures. These are described by the following SDEs:
    for $i = 1, \cdots, N$, $t \in [0,T]$,
    \begin{align*}
        &
        \begin{split}
        X^{i,N,\taub^{N}}_t
        ~ = ~
        X^i_0 ~ + ~ \int_{0}^{t \wedge \tau^{i,N}}b\Big(s, X^{i,N,\taub^{N}}_{s\wedge \cdot}, \mu^{N,\taub^{N}}_s\Big) dt
        ~ & + ~ \int_{0}^{t \wedge \tau^{i,N}}\sigma\Big(s, X^{i,N,\taub^{N}}_{s\wedge \cdot}, \mu^{N,\taub^{N}}_s\Big) dW^i_s
        \\~ & + ~ \int_{0}^{t \wedge \tau^{i,N}}\sigma_0\Big(s, X^{i,N,\taub^{N}}_{s\wedge \cdot}, \mu^{N,\taub^{N}}_s\Big) dB_s,
        \end{split}
        \\
        & ~~~
        \mu^{N,\taub^{N}}_t
        ~ := ~
        \frac{1}{N}\sum_{i = 1}^{N}
        \delta_{\big(X^{i,N, \taub^N}_{t \wedge \cdot}, W^i, \tau^i \wedge t\big)},
    \end{align*}
    where the $N$ stopping times $\taub^{N}:= (\tau^{1,N}, \cdots, \tau^{N,N})$ are fixed or chosen, $W^1, \cdots, W^N$, $B$ are a family of independent Brownian motions and $X^1_0, \cdots, X^N_0$ are independent initial values.
    Then we consider the $N$-player optimal stopping problem
    \begin{equation*}
        \sup_{\taub^{N}}
        \frac{1}{N}\sum_{i = 1}^{N}
            \E
                \bigg[
                    \int_{0}^{T \wedge \tau^{i,N}}f\Big(s,X^{i,N,\taub^N}_{s\wedge \cdot}, \mu^{N,\taub^{N}}_s\Big) ds
                    + g\Big(\tau^{i,N}, X^{i,N,\taub^N}_{s\wedge \cdot}, \mu^{N,\taub^{N}}_T\Big)
                \bigg].
    \end{equation*}
    We refer to Kobylanski, Quenez and Rouy-Mironescu \cite{KobylanskiQuenezRouy} for a detailed discussion of multi optimal stopping problem in a more general setting.

    In this paper, our objective is to investigate the limit theory for the mean field optimal stopping problem in the presence of common noise.
    Without consideration of the stopping time, such limit theory is generally referred to as propagation of chaos. This concept was initially studied by Kac \cite{Kac} and McKean Jr. \cite{McKean}. For more insightful discussions on this topic, Sznitman's lecture notes \cite{Sznitman} are highly recommended.

    In the controlled case, the convergence of the value function has been studied by several researchers. Lacker \cite{Lacker} examined it in a general setting without common noise, while Djete, Possamaï, and Tan \cite{DjetePossamaiTan} extended the study to include common noise and the joint law of the state process and control in a more comprehensive setting. Both of these studies \cite{Lacker, DjetePossamaiTan} introduced suitable martingale problems.

    The convergence rate of the value function has been further investigated using PDE techniques. Cecchin \cite{Cecchin} examined it in the finite state space, while Baryaktar, Cecchin, and Chakraborty \cite{BaryaktarCecchinChakraborty} studied the continuous state space with regime switching in the state dynamics. Germain, Pham, and Warin \cite{GermainPhamWarin} conducted their study under the assumption that the limit value is smooth. Cardaliaguet, Daudin, Jackson, and Souganidis \cite{Cardaliaguet1}, and Cardaliaguet and Souganidis \cite{Cardaliaguet2} carried out their investigations under a decoupling assumption on the Hamiltonian.

    In the stopped case, the literature is quite limited. Talibi, Touzi, and Zhang \cite{TalbiTouziZhang1, TalbiTouziZhang2, TalbiTouziZhang3} established well-posedness for an obstacle equation in Wasserstein distance from the mean field optimal stopping problem, as well as the corresponding Dynamic Programming Principle (DPP) and limit theory for the problem.

    The primary goal of this paper is to demonstrate the convergence result of the value function and optimizers of the $N$-player optimal stopping problem towards those of the mean field optimal stopping problem with common noise as $N$ tends to $+\infty$. We also aim to establish the equivalence between the value functions of its strong and weak formulations

    We first formulate both a strong and a weak formulation of the mean field optimal stopping problem, inspired by Djete, Possamaï, and Tan \cite{DjetePossamaiTan}. Heuristically, the feature of its strong formulation is that there exists a measurable function $\phi$ such that $\tau = \phi(X_0, W, B)$. In contrast, the weak formulation does not necessarily require such a measurable function and instead introduces a $(H)$-Hypothesis-type condition, which is crucial for establishing our equivalence result of the value functions.

    For simplicity, both formulations are introduced in a canonical space, allowing the control term to be formulated as probability measures on this space. These are defined as strong and weak stopped rules, respectively. We refer to \cite{DjetePossamaiTan} for the equivalence between our strong formulation and the formulation with a fixed probability space equipped with two Brownian motions $W$, $B$, and their natural filtrations.
    To prove the equivalence between value functions, we demonstrate that each weak stopped rule can be approximated by a sequence of strong stopped rules.

    In the remainder of the paper, we demonstrate propagation of chaos for our stopped McKean-Vlasov dynamics, leading to an estimation between the value function of the stopped McKean-Vlasov dynamics and that of the $N$-player optimal stopping problem. By introducing a suitable martingale problem, we show the convergence of the $\eps_N$-solution to the $N$-player optimal stopping problem towards the corresponding stopped McKean-Vlasov dynamics. For the convergence of the martingale problem, we adopt a different proof from \cite{Lacker}, which is not valid with the presence of common noise.

    The paper is organized as follows. In Section 2, we introduce the mean field optimal stopping problem, as well as the large but finite population optimal stopping problem. In Section 3, we present the main results of the paper. Section 4 is devoted to proofs, where we divide it into three subsections for preliminary results and one subsection for the proof of the main results.

\section{Mean field optimal stopping problem: different formulations}

    Let the horizon $T < +\infty$.
    For any measurable space $(\Om,\Fc)$,
    let $\Pc(\Om)$ denote the space of all probability measures on $(\Om,\Fc)$.

    For any Polish space $(E,d)$,
    let $C([0,T],E)$ denote the space of all $E$-valued continuous functions on $[0,T]$ equipped with the uniform norm $\|\cdot\|$,
    $\Bc(E)$ denote the collection of all Borel sets on $E$,
    $C_b(E)$ denote the space of all $\R$-valued bounded continuous functions on $E$,
    $\Pc_p(E)$ denote the space of all probability measures $\rho$ on $E$ such that
    \begin{equation*}
        \int_E d(x,x_0)^p \rho(dx) < +\infty,
    \end{equation*}
    For any $\rho \in \Pc(E)$ and $\phi \in C_b(E)$, we define $\langle \phi,\rho \rangle := \int_E\phi(x)\rho(dx)$.

    For any $\P \in \Pc(\Om)$, $E$-valued random variable $Y$, and any sub $\sigma$ algebra $\Gc$ of $\Fc$, we define $\Lc^{\P}(Y) := \P \circ Y^{-1}$ and $\Lc^{\P}(Y|\Gc) := \P^{\Gc}_\om \circ Y^{-1}$, where $\{\P^{\Gc}_\om\}_{\om \in \Om}$ denote the conditional probability distributions of $\P$ knowing $\Gc$.

    For $p \ge 1$, the Wasserstein $p$-distance $\Wc_p$ between two probability measures $\rho_1$ and $\rho_2$ with $\rho_1, \rho_2 \in \Pc_p(E)$ is defined as
    \begin{equation*}
        \Wc_p(\rho_1,\rho_2)
        ~ := ~
        \bigg(
            \inf_{\rho \in \Gamma(\rho_1,\rho_2)}
            \int_{E \x E}d(e_1,e_2)^p \rho(de_1,de_2)
        \bigg)^{1/p},
    \end{equation*}
    where $\Gamma(\rho_1,\rho_2)$ is the collection of all probability measures such that $\rho(de,E) = \rho_1(de)$ and $\rho(E, de) = \rho_2(de)$.

    \vspace{0.5em}

    Now for any positive integer $m$, $m_1$ and $m_2$,
    let $\Wc^{m}$ denote $m$-dimensional Wiener measure,
    $\Cc^m := C([0,T],\R^m)$,
    and
    $\S^{m_1 \x m_2}$ the space of all $m_1 \x m_2$-dimensional matrices with real entries, equipped with the standard Euclidean norm $|\cdot|$.

    We will introduce a series of functions and their assumptions which are in force throughout the paper.
    Let $n,d \in \N_+$, $\ell \in \N$ be strictly positive integers and positive integer, respectively.
    The stopped diffusion process \eqref{eq:intro_dynamics} has the following coefficient functions:
    \begin{equation*}
        (b, \sigma, \sigma_0): [0,T] \x \Cc^n \x \Pc(\Cc^n \x \Cc^d \x [0,T]) \longrightarrow \R^n \x \S^{n \x d} \x \S^{n \x \ell},
    \end{equation*}
    and the objective function is composed of the running reward function $f$ and terminal reward function $g$ defined as:
    \begin{equation*}
        f: [0,T] \x \Cc^n \x \Pc(\Cc^n \x \Cc^d \x [0,T]) \longrightarrow \R
        ~\mbox{and}~
        g: [0,T] \x \Cc^n \x \Pc(\Cc^n \x \Cc^d \x [0,T]) \longrightarrow \R
    \end{equation*}

    \begin{assumption}\label{ass:coefficient_reward}
        The maps $(b,\sigma,\sigma_0,f,g)$ are Borel measurable,
        and for some $p \ge 2$:

        \noindent $\mathrm{(i)}$
        the maps $(b,\sigma,\sigma_0)$ are uniformly Lipschitz in $(x,m)$,
        i.e. there exists a constant $L > 0$
        such that for all for all $(t, x_1, x_2, m_1, m_2) \in [0,T] \x \Cc^n \x \Cc^n \x \Pc_p(\Cc^n \x \Cc^d \x [0,T]) \x \Pc_p(\Cc^n \x \Cc^d \x [0,T])$,
        \begin{equation*}
            |(b,\sigma,\sigma_0)(t, x_1, m_1) - (b,\sigma,\sigma_0)(t, x_2, m_2)|
            ~ \le ~
            L\big(\|x_1 - x_2\| + \Wc_p(m_1,m_2)\big),
        \end{equation*}

        \noindent $\mathrm{(ii)}$
        the function $g$ is bounded lower semi-continuous in $(t, x, m)$,
        and for all $t \in [0,T]$, the function $f$ is bounded, lower semi-continuous in $(x,m)$.
    \end{assumption}
    
    Consequently, one can obtain that there exists some constant $C >0$, such that for all $(t, x, m) \in [0,T] \x \Cc^n \x \Pc_p(\Cc^n \x \Cc^d \x [0,T])$,
    \begin{align*}
        |(b,\sigma,\sigma_0)(t, x, m)|^p
         ~ \le &~
        C\big(1 + \|x\|^p + \Wc^p_p(m,\delta_{(0,0,0)})\big)
        \\ ~ \le &~
        C\bigg(1 + \|x\|^p + \int_{\Cc^n \x \Cc^d \x [0,T]}(\|x\| + \|w\| + |t|)^p m(dx,dw,dt)\bigg)
        \\ ~ \le &~
        C\bigg(1 + \|x\|^p + \int_{\Cc^n \x \Cc^d \x [0,T]}(\|x\|^p + \|w\|^p) m(dx,dw,dt)\bigg).
    \end{align*}
    
    \vspace{0.5em}

    \subsection{Strong and weak formulations of the mean field optimal stopping problem on canonical space}
        Let us consider the canonical space
        $$
            \Om := \Cc^n \x \Cc^d \x \Cc^\ell \x \Pc_p(\Cc^n \x \Cc^d \x [0,T]) \x [0,T]
        $$
        and the canonical process $(X = (X_t)_{t\in [0,T]}, W =(W_t)_{t\in [0,T]}, B = (B_t)_{t\in [0,T]}, \tau, \mu)$ given by the natural projections, $(X, W, B): \Om \longrightarrow \Cc^{n} \x \Cc^d \x \Cc^\ell$, $\tau: \Om \longrightarrow [0,T]$ and $\mu: \Om \longrightarrow \Pc_p(\Cc^n \x \Cc^d \x [0,T])$, which are defined by
        $$
            (X_t, W_t, B_t)(\om) := (x_t,w_t,b_t),
            ~
            \tau(\om) := \theta,
            ~
            \mu(\om) := m,
            ~
            \mbox{for all}
            ~
            \om = (x, w, b, \theta, m) \in \Om.
        $$
        Each natural projection in the canonical space, as well as the combination of these projections, will generate different filtrations.
        We need to introduce the following notations which we shall use in the sequel. For the continuous processes $X$, $W$ and $B$, they generate filtrations in the natural way as follows
        \begin{equation*}
            \F^Y := (\Fc^Y_t)_{t \in [0,T]},
            ~
            \Fc^Y_t := \sigma(Y_s, s \in [0,t]),
            ~
            \mbox{for}~
            t \in [0,T],
            ~
            Y = X, W ~\mbox{or}~ B.
        \end{equation*}
        For the random time $\tau$, the filtration $\F^\tau$ generated by it is given as
        \begin{equation*}
            \F^\tau := (\Fc^\tau_t)_{t \in [0,T]},
            ~
            \Fc^\tau_t := \sigma(\tau \wedge s, s \in [0,t])
            ~
            \mbox{for}~
            t \in [0,T].
        \end{equation*}
        The filtration $\F^{\mu}$ generated by $\mu$ is defined as
        \begin{equation*}
            \F^{\mu} := (\Fc^{\mu}_t)_{t \in [0,T]},
            ~
            \Fc^{\mu}_t := \sigma({\mu} \o \pi^{-1}_t)
            ~
            \mbox{for}~
            t \in [0,T],
        \end{equation*}
        where the projection map $\pi_t$ is defined on $\Cc^n \x \Cc^d \x [0,T]$ by $\pi_t(x, w, \theta) = (x_{t\wedge \cdot}, w_{t\wedge \cdot}, \theta \wedge t)$.

        For any filtration generated by multiple projections on the canonical space $\Om$, for instance, $B$ and $\mu$, we may simply denote the filtration by $\F^{B,\mu} := \F^B \vee \F^\mu$.
        In particular, the full filtration $\F^{X,W,B,\tau,\mu}$ is hereinafter abbreviated as $\F$ for simplicity.
        In particular, we denote by $\G := (\Gc_t)_{t \in [0,T]}$, the filtration generated by $B$ and $\mu$, i.e., $\Gc_t := \Fc^{B,\mu}_t$, for all $t \in [0,T]$.
        
        For any filtration $\mathbb{H}$ on the canonical space $(\Om, \Fc_T)$, we write $\mathbb{H}_+$ for the right-continuous filtration $(\mathcal{H}_{t+})_{t \in [0,T]}$, where $\mathcal{H}_{t+} := \bigcap_{s > t}\mathcal{H}_s$ for $t \in [0,T)$.
        In particular, $\tau$ is a $\F_+$-stopping time.

        Given the aforementioned canonical space and its attributes, we can now introduce the weak stopped rule. This rule forms the admissible set for the mean field optimal stopping problem.
        \begin{definition}\label{def:strong_weak_stopping}
            Let $\nu \in \Pc_p(\R^n)$.
            A probability $\P$ on $(\Om,\Fc_T)$ is called a weak stopped rule associated with $\nu$ if
            \begin{enumerate}[(i)]
              \item $(B,W)$ is a $(\F, \P)$-Brownian motion, 
                            $\tau$ is a $\F_+$-stopping time.
              \item $(B,\mu)$ is independent of $(X_0, W)$.
              \item It holds $\P$-a.s. that $\Lc^\P(X_0) = \nu$ and, for all $t \in [0,T]$,
                    \begin{equation*}
                        X_t
                        ~ = ~
                        X_0 + \int_{0}^{t \wedge \tau}b(s, X_{s\wedge \cdot}, \mu_s) dt
                        + \int_{0}^{t \wedge \tau}\sigma(s, X_{s\wedge \cdot}, \mu_s) dW_s
                        + \int_{0}^{t \wedge \tau}\sigma_0(s, X_{s\wedge \cdot}, \mu_s) dB_s.
                    \end{equation*}
              \item It holds $\P$-a.s. that for all $t \in [0,T]$,
                    \begin{equation}\label{eq:consistency_weak}
                        \mu_t = \Lc^\P((X_{t\wedge \cdot}, W, \tau \wedge t)|\Gc_t).
                    \end{equation}
              \item It holds $\P$-a.s. that for all $t \in [0,T]$, and $D \in \Fc_t \vee \Fc^W_T$,
                    \begin{equation}\label{eq:H_hypothesis}
                        \E^\P[\mathds{1}_{D}|\Gc_T]
                        ~ = ~
                        \E^\P[\mathds{1}_{D}|\Gc_t].
                    \end{equation}
            \end{enumerate}
            A probability $\P$ on $(\Om,\Fc_T)$ is called a strong stopped rule associated with $\nu$ if it is a weak stopped rule and
            $\tau$ is a stopping time w.r.t. the $\P$-completion of $\F^{X_0,W,B}$.
        \end{definition}
        \begin{remark}
            $(i)$ For the term (iii), it is equivalent to state that it holds true $\mu_T$-a.e., $\P$-a.s.
    
            \noindent $(ii)$ For the term (v), it is usually called a $(H)$-hypothesis-type condition. 
            Technically speaking, the condition is used to obtain the equivalence between strong and weak value functions. 
            In particular, the properties \eqref{eq:consistency_weak_m} and \eqref{eq:consistency_m} in Lemma \ref{lemma:equivalence_approximation} are proved by this condition.
            These properties are later on used to prove Lemma \ref{lemma:equivalence_strong_weak_discrete}.
            Generally speaking, 
            this condition is to guarantee that there exists a sequence of $\{\mu^m\}_m$, which are conditional laws of $(X, W, \tau^m)$ w.r.t. $B$, that can converge in distribution to the $\mu$ in any weak stopped rule, where $\{\tau^m\}_m$ are a sequence of $\F^{X_0,W,B}$-stopping times.
            One may refer to Section 4.2 in \cite{ElKarouiTan2} for general intuition, and a detailed discussion.
            
            \noindent $(iii)$ For existence, uniqueness, and $L^p$ boundedness of the strong solutions to the Mckean-Vlasov SDE we consider in this definition, and those new Mckean-Vlasov SDEs we will consider in the following content, one may refer to Theorem 5.1.1 in \cite{StroockVaradhan} for general idea, or to Theorem A.3 in \cite{DjetePossamaiTan1} for the corresponding proof in control setting, which is almost the same. 
        \end{remark}
        We can now define the sets $\Pc_W(\nu)$ and $\Pc_S(\nu)$, which denote the collection of all weak stopped rules associated with $\nu$ and that of all strong stopped rules associated with $\nu$, respectively.
        With this notation established, we proceed to formally define the weak and strong McKean-Vlasov stopping time problem as follows:
        \begin{equation*}
            V_W(\nu)
            ~ := ~
            \sup_{\P \in \Pc_W(\nu)}
            J(\P),
            ~
            V_S(\nu)
            ~ := ~
            \sup_{\P \in \Pc_S(\nu)}
            J(\P)
        \end{equation*}
        with
        \begin{equation*}
            J(\P)
            ~ := ~
            \E^{\P}\bigg[
            \int_{0}^{T \wedge \tau}f(s,X_{s\wedge \cdot}, \mu_s) ds
            + g(\tau, X_{\cdot}, \mu_T)\bigg].
        \end{equation*}
        Next, we denote by $\Pc^*_W(\nu)$ the collection of all elements $\P^*$ in $\Pc_W(\nu)$ such that
        $
            V_W(\nu) ~ = ~ J(\P^*).
        $

    \subsection{A large population optimal stopping problem with common noise}\label{subsec:N_player}
        Let $(\Om^0,\Fc^0,\P^0)$ be an abstract probability space equipped with a sequence of independent $d$-dimensional Brownian motions $\{W^{i}\}_{i \in \N_+}$,
        a $\ell$-dimensional Brownian motion $B$ that is independent of $W^i$ for all $i \in \N_+$,
        a sequence of random variables $\{X^i_0\}_{i \in \N_+}$ independent of $\{B,W^1, \cdots, W^n, \cdots\}$, 
        and a sequence of probability measures $\{\nu_i\}_{i \in \N_+} \subset \Pc_p(\R^n)$.

        For each $N \in \N$, let $\F^N := \{\Fc^N_t\}_{t \in [0,T]}$ denote the augmented filtration generated by $\{X^i_0\}_{1 \le i \le N}$, $\{W^i_0\}_{1 \le i \le N}$ and $B$. Additionally, let $\Tc^N$ be the set of $\F^N$-stopping times taking values in $[0,T]$.

        Given any $\taub^{N}:= (\tau^{1,N}, \cdots, \tau^{N,N}) \in (\Tc^N)^N$, there exists a sequence of the continuous processes $\mathbf{X}^{N,\taub^{N}}:= (X^{1,N,\taub^{N}}, \cdots, X^{N,N,\taub^{N}})$, which are the unique strong solutions to the following a large population stopped SDEs:
        for $i = 1, \cdots, N$, $t \in [0,T]$,
        \begin{equation}\label{eq:N_SDE}
            \begin{split}
            X^{i,N,\taub^{N}}_t
            ~ = ~
            X^i_0 ~ + ~ \int_{0}^{t \wedge \tau^{i,N}}b(s, X^{i,N,\taub^{N}}_{s\wedge \cdot}, \mu^{N,\taub^{N}}_s) dt
            ~ & + ~ \int_{0}^{t \wedge \tau^{i,N}}\sigma(s, X^{i,N,\taub^{N}}_{s\wedge \cdot}, \mu^{N,\taub^{N}}_s) dW^i_s
            \\~ & + ~ \int_{0}^{t \wedge \tau^{i,N}}\sigma_0(s, X^{i,N,\taub^{N}}_{s\wedge \cdot}, \mu^{N,\taub^{N}}_s) dB_s,
            \end{split}
        \end{equation}
        respectively, where for each $t \in [0,T]$,
        \begin{equation}\label{eq:N_SDE_consistency}
            \mu^{N,\taub^{N}}_t
            ~ := ~
            \frac{1}{N}\sum_{i = 1}^{N}
            \delta_{\big(X^{i,N,\taub^{N}}_{t\wedge \cdot}, W^i, \tau^{i,N}\wedge t\big)}
            ~\mbox{for each}~
            t \in [0,T],
            ~\mbox{and}~
            \Lc^{\P^0} (X^i_0)
                ~ = ~
            \nu_i.
        \end{equation}
        The $N$-player optimal stopping problem is given by
        \begin{equation}
            V^N_S(\nu_1, \cdots, \nu_N)
            ~ := ~
            \sup_{\taub^{N} \in (\Tc^N)^N}
            J_N(\taub^{N})
        \end{equation}
        with for each $\taub^{N} \in (\Tc^N)^N$,
        \begin{equation}
            J_N(\taub^{N})
            :=  ~
            \frac{1}{N}\sum_{i = 1}^{N}
            \E^{\P^0}
                \bigg[
                    \int_{0}^{T \wedge \tau^{i,N}}f(s,X^{i,N,\taub^N}_{s\wedge \cdot}, \mu^{N,\taub^{N}}_s) ds
                    + g(\tau^{i,N}, X^{i,N,\taub^N}_{s\wedge \cdot}, \mu^{N,\taub^{N}}_T)
                \bigg].
        \end{equation}
        Hence, for each $\taub^{N} \in (\Tc^N)^N$
        we can construct a probability measure $\P_N$, defined as
        \begin{equation}\label{eq:P_N_construct}
            \P_N
            ~ := ~
            \frac{1}{N}\sum_{i = 1}^{N}
            \Lc^{\P^0}
            \Big(X^{i,N,\taub^N}, W^i, B, \frac{1}{N}\sum_{i = 1}^{N}\delta_{(X^{i,N,\taub^{N}}, W^i, \tau^{i,N})}, \tau^{i,N}\Big).
        \end{equation}
        and we may notice that
        $
            J_N(\taub^N) = J(\P_N).
        $

\section{Main results}

    \begin{theorem}\label{thm:equivalence_value_function}
        Let Assumption \ref{ass:coefficient_reward} holds true, $\nu \in \Pc_p(\R^n)$.
        Then the set $\Pc_S(\nu)$ is dense in the compact, convex set $\Pc_W(\nu)$ under the Wasserstein distance $\Wc_p$.
        Consequently, it holds that
        \begin{equation*}
            V_S(\nu) = V_W(\nu).
        \end{equation*}
    \end{theorem}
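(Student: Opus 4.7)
The inclusion $\Pc_S(\nu) \subset \Pc_W(\nu)$ is immediate from the definitions, yielding $V_S(\nu) \le V_W(\nu)$ directly. For the reverse inequality, I would use lower semi-continuity of $J$ on $(\Pc_W(\nu), \Wc_p)$: since $f, g$ are bounded and lower semi-continuous by Assumption \ref{ass:coefficient_reward}(ii), Portmanteau together with the uniform moment bounds established below gives $J(\P) \le \liminf_n J(\P_n)$ whenever $\P_n \to \P$ in $\Wc_p$. Once density of $\Pc_S(\nu)$ in $\Pc_W(\nu)$ is proved, any $\P^* \in \Pc_W(\nu)$ approximated by $\P_n \in \Pc_S(\nu)$ satisfies $J(\P^*) \le \liminf_n J(\P_n) \le V_S(\nu)$, and taking $\P^*$ to approach the supremum delivers $V_W(\nu) \le V_S(\nu)$. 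The task therefore splits into three pieces: compactness of $\Pc_W(\nu)$, its convexity, and density of $\Pc_S(\nu)$.

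For compactness I would rely on standard SDE estimates: Assumption \ref{ass:coefficient_reward}(i) together with Gr\"onwall's lemma and the Burkholder--Davis--Gundy inequality produce a uniform bound $\sup_{\P \in \Pc_W(\nu)} \E^\P[\|X\|^p + \|W\|^p + \|B\|^p] < \infty$, so Aldous's criterion yields tightness of $(X, W, B, \tau)$ in $\Wc_p$, and consistency (iv) propagates this tightness to $\mu$. For closedness along $\P_n \to \P^*$ in $\Wc_p$, conditions (i)--(iii) pass to the limit through a martingale problem characterization, while (iv) and (v) are preserved because they are equivalent to families of orthogonality relations testable against bounded continuous functionals on the canonical space. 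Convexity is then obtained from the fact that $\mu$ is a canonical coordinate: disintegrating $\alpha \P_1 + (1-\alpha) \P_2$ along $\mu$, each fiber inherits the SDE, the consistency (iv), and the $(H)$-hypothesis from exactly one of the $\P_i$, so all conditions survive the mixture.

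Density is the core of the argument. Given $\P \in \Pc_W(\nu)$, I would discretize the stopping time by setting $\tau^m := \sum_{k=1}^{2^m} (kT/2^m) \mathds{1}_{\{(k-1)T/2^m < \tau \le kT/2^m\}}$, which takes finitely many values and decreases to $\tau$. Applying the $(H)$-hypothesis \eqref{eq:H_hypothesis} to the events $D = \{\tau^m = kT/2^m\}$ yields that their conditional law given $\Gc_T$ coincides with their conditional law given $\Gc_{kT/2^m}$; combined with consistency (iv), this produces at each grid point a transition kernel from the $(X_0, W, B)$-history to the stopping decision. Realizing each kernel via an independent uniform randomization and absorbing that randomization into the strong filtration by the classical Brownian enlargement trick yields a strong stopping time $\tauh^m$. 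Re-solving \eqref{eq:intro_dynamics} with $\tauh^m$ defines a strong stopped rule $\P_m$, and stability of the McKean--Vlasov SDE in the stopping time, together with the uniform moment bounds from the compactness step, upgrades the weak convergence $\P_m \to \P$ to $\Wc_p$-convergence.

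The hardest point I anticipate is the self-consistency of this construction: replacing $\tau^m$ by $\tauh^m$ changes the stopped trajectory $X^{\tauh^m}$, which in turn alters the $\mu$ forced by condition (iv), so the new $\mu$ must still be the conditional law of the new $(X, W, \tau)$ given $\Gc$. This requires a simultaneous fixed-point construction of $(\tauh^m, X, \mu)$ at the discrete level, and the $(H)$-hypothesis is precisely the tool that keeps the law of $\mu$ invariant under the redistribution of the stopping-time information into the strong filtration. The finiteness of the range of $\tau^m$ is what keeps the fixed-point tractable; carrying this through while preserving $\Wc_p$-control as $m \to \infty$ is where the most delicate estimates will be concentrated.
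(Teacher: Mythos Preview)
Your proposal follows the same architecture as the paper---trivial inequality, density via discretization of $\tau$, lower semi-continuity of $J$ via Portmanteau, compactness and convexity handled separately---but two implementation points differ in ways worth knowing. First, your ``classical Brownian enlargement trick'' is the crux, and the paper's mechanism is quite specific: rather than enlarging the filtration, it introduces time-shifted Brownian motions $W^m := W_{\cdot \vee t_1^m} - W_{t_1^m}$ and $B^m$ analogously, re-poses the discretized SDE on $[t_1^m,T]$ driven by $(W^m,B^m)$, and then manufactures the auxiliary uniforms from the \emph{original} increments of $(W,B)$ on $[0,t_1^m]$; this keeps $\tilde\tau^m$ genuinely $\F^{X_0,W,B}$-measurable, whereas an abstract enlargement by an independent uniform would not produce a strong stopped rule in the sense of Definition~\ref{def:strong_weak_stopping}. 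Second, the self-consistency issue you anticipate as a fixed point is not resolved as one: the paper (Lemma~\ref{lemma:equivalence_strong_weak_discrete}) instead builds $(\tilde\mu^m_i,\tilde\tau^m_i)$ inductively along the grid via measurable-selection functions $G^\mu_i, G^\tau_i$ reproducing the relevant conditional laws (El Karoui--Tan), and then verifies equality of joint laws \eqref{eq:joint_law_inductive} by induction; the $(H)$-hypothesis is used exactly to decouple the $(X_0,W^m,\tau^m)$-block from the future of $(B^m,\mu^m)$ at each step so the induction closes. Both your route and the paper's arrive at the same place, but the paper's avoids any iteration and makes transparent where \eqref{eq:H_hypothesis} is consumed.
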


    \begin{theorem}\label{thm:main}
        Let Assumption \ref{ass:coefficient_reward} holds true, $\{\nu_i\}_{i \in \N_+} \subset \Pc_p(\R^n)$ be such that for some $\eps > 0$,
            $$
                \sup_{N \ge 1}\frac{1}{N}\sum_{i = 1}^{N}\int_{\R^n}|x|^{p + \eps}\nu_i(dx)
                ~ < ~
                +\infty.
            $$
        and let the function $g$ be continuous in $(t, x, m)$,
        and for all $t \in [0,T]$, the function $f$ be continuous in $(x,m)$.
        \begin{enumerate}[(i)]
          \item For any sequence of stopping times $\{\taub^N := (\tau^{1,N}, \cdots, \tau^{N,N}) \in (\Tc^N)^N\}_{N \in \N_+}$
              together with a sequence of positive numbers $(\eps_N)_{N \in \N_+} \subset \R_+$ with
                $
                    \lim_{N \to \infty}
                    \eps_N
                    ~ = ~
                    0,
                $
              such that
              \begin{equation*}
                J_N(\taub^{N}) \ge V^N_S(\nu_1, \cdots, \nu_N) - \eps_N,
                ~\mbox{for each}~
                N \in \N_+,
              \end{equation*}
              it holds that the sequence of probability measures $\{\P_N\}_{N \in \N_+}$, given by
                \begin{equation*}
                    \P_N
                    ~ := ~
                    \frac{1}{N}\sum_{i = 1}^{N}
                    \Lc^{\P^0}
                    \Big(X^{i,N,\taub^N}, W^i, B, \frac{1}{N}\sum_{i = 1}^{N}\delta_{(X^{i,N,\taub^{N}}, W^i, \tau^i)}, \tau^i\Big),
                \end{equation*}
               constructed as \eqref{eq:P_N_construct}, is relatively compact under $\Wc_p$ and every limit point of $\{\P_N\}_{N \in \N_+}$ belongs to $\Pc^*_W(\nu)$ for some $\nu \in \Pc_p(\R^n)$ with
                \begin{equation*}
                    \lim_{m \to \infty}\frac{1}{N_m}\sum_{i = 1}^{N_m}\nu_i ~ = ~\nu.
                \end{equation*}
          \item If in addition we assume that
                \begin{equation*}
                    \lim_{N \to \infty}\frac{1}{N}\sum_{i = 1}^{N}\nu_i ~ = ~\nu,
                    ~\mbox{for some}~
                    \nu \in \Pc_p(\R^n).
                \end{equation*}
                Then it holds that
                \begin{equation*}
                    \lim_{N \to \infty}V^N_S(\nu_1, \cdots, \nu_N)
                    ~ = ~
                    V_S(\nu).
                \end{equation*}
                Moreover, for any $\P^* \in \Pc^*_W(\nu)$, there exists a sequence of stopping times $\{\taub^{N}\}_{N \in \N_+}$, where $\taub^{N} \in (\Tc^N)^N$, for all $N \in \N_+$, and a sequence of positive numbers $\{\eps_N\}_{N \in \N_+} \subset \R_+$ with
                $
                    \lim_{N \to \infty}
                    \eps_N
                    ~ = ~
                    0,
                $
                such that
                \begin{equation*}
                    J_N(\taub^{N}) \ge V^N_S(\nu_1, \cdots, \nu_N) - \eps_N,
                    ~\mbox{for each}~
                    N \in \N_+,
                \end{equation*}
                and
                \begin{equation*}
                    \lim_{N \to \infty}\Wc_p(\P_N, \P^*) ~ = ~ 0.
                \end{equation*}
          \item It holds that
                \begin{equation*}
                    \lim_{N \to \infty}
                    \bigg|V^N_S(\nu_1, \cdots, \nu_N) - V_S\Big(\frac{1}{N}\sum_{i = 1}^{N}\nu_i\Big)\bigg|
                    ~ = ~ 0.
                \end{equation*}
        \end{enumerate}
    \end{theorem}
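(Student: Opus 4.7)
My plan is to treat the three parts in sequence, with the compactness/characterization in (i) doing the heavy lifting and (ii)--(iii) following as consequences, leveraging the strong/weak equivalence from Theorem \ref{thm:equivalence_value_function}.

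For part (i), I first establish relative compactness of $\{\P_N\}$ under $\Wc_p$. Standard moment estimates for the SDE system \eqref{eq:N_SDE}, together with the $(p+\eps)$-moment bound $\sup_N\frac{1}{N}\sum_i\int|x|^{p+\eps}\nu_i(dx)<\infty$, yield $\sup_N \frac{1}{N}\sum_i\E\|X^{i,N,\taub^N}\|^{p+\eps}<\infty$, which gives tightness of each marginal in $\Wc_p$ and the uniform integrability needed to upgrade weak convergence to $\Wc_p$-convergence. The empirical-measure marginal is tight by the same bound. Along a convergent subsequence the averages $\bar\nu_{N_m}:=\frac{1}{N_m}\sum_{i=1}^{N_m}\nu_i$ converge in $\Wc_p$ to some $\nu\in\Pc_p(\R^n)$. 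The main work is identifying any limit point $\P$ as a weak stopped rule: I would check each of the five items of Definition \ref{def:strong_weak_stopping} in turn. Items (i)--(iii) pass to the limit via a martingale-problem reformulation of \eqref{eq:N_SDE} against smooth test functions, using the Lipschitz/continuity hypotheses to identify the coefficients in the limit. Item (iv), $\mu_t=\Lc^\P((X_{t\wedge\cdot},W,\tau\wedge t)|\Gc_t)$, is the main obstacle: I would exploit conditional exchangeability of the particle system given $B$ together with a conditional law-of-large-numbers to identify the limit of the empirical measure with the $\Gc_T$-conditional law of $(X,W,\tau)$ under $\P$, then descend to time-$t$ marginals via the $\pi_t$-projection. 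The $(H)$-hypothesis (v) is preserved because, at each finite $N$, $\mu^{N,\taub^N}$ is $\F^N$-measurable and the independence of $B$ from the $W^i$ is intrinsic to the construction; conditional independence relations of this form are stable under $\Wc_p$-convergence.

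Optimality of the limit then follows from matching bounds. For $\limsup_N V_S^N(\nu_1,\ldots,\nu_N)\le V_W(\nu)$ I use $J_N(\taub^N)=J(\P_N)\to J(\P)\le V_W(\nu)$ along any convergent subsequence, combined with $\eps_N$-optimality. For the reverse inequality I take an arbitrary strong stopped rule $\P^S\in\Pc_S(\nu)$, extract its representation $\tau=\phi(X_0,W,B)$ for some Borel $\phi$, transplant it to the $N$-player setting by setting $\tau^{i,N}:=\phi(X^i_0,W^i,B)$, and invoke standard propagation of chaos for strong McKean--Vlasov SDEs (whose wellposedness is referenced after Definition \ref{def:strong_weak_stopping}) together with continuity of $f,g$ to deduce $J_N(\taub^N)\to J(\P^S)$; passing to the supremum and using $V_S(\nu)=V_W(\nu)$ from Theorem \ref{thm:equivalence_value_function} closes the loop.

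Part (ii) is essentially immediate from part (i) once $\frac{1}{N}\sum_i\nu_i\to\nu$: every limit point in part (i) is then associated with this $\nu$ and achieves $V_W(\nu)=V_S(\nu)$, so $V_S^N\to V_S(\nu)$. For the approximation of a prescribed $\P^*\in\Pc^*_W(\nu)$, I would invoke Theorem \ref{thm:equivalence_value_function} to find strong rules $\P^{S,m}\to\P^*$ in $\Wc_p$, approximate each $\P^{S,m}$ by the $N$-player empirical law along $\tau^{i,N}:=\phi^m(X^i_0,W^i,B)$, and extract a diagonal subsequence; the $\eps_N$-gap is forced to zero by the value convergence. For part (iii) I argue by contradiction: if $|V_S^N(\nu_1,\ldots,\nu_N)-V_S(\bar\nu_N)|$ failed to vanish along some subsequence, writing $\bar\nu_N:=\frac{1}{N}\sum_i\nu_i$, the uniform $(p+\eps)$-moment bound gives $\Wc_p$-tightness of $\{\bar\nu_N\}$; extracting a further subsequence with $\bar\nu_{N_k}\to\nu$, part (ii) gives $V_S^{N_k}\to V_S(\nu)$, while continuity of $V_S$ at $\nu$ (itself a corollary of (ii) applied with i.i.d.~samples from $\nu$) gives $V_S(\bar\nu_{N_k})\to V_S(\nu)$, yielding the desired contradiction.
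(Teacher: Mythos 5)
Your overall architecture matches the paper's: establish $\Wc_p$-relative compactness of $\{\P_N\}$ and identify limits as weak stopped rules (the paper's Proposition \ref{prop:weak_main_theorem}), then prove the matching bounds $\limsup_N V^N_S \le V_W(\nu)$ and $V_S(\nu) \le \liminf_N V^N_S$ (Proposition \ref{prop:limit_value_function}), and finally invoke $V_S(\nu)=V_W(\nu)$ from Theorem \ref{thm:equivalence_value_function}. Parts (ii) and (iii) are treated as consequences in both. However, there are genuine gaps.

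First, for the reverse inequality $V_S(\nu)\le\liminf_N V^N_S$, you propose taking an arbitrary $\P^S\in\Pc_S(\nu)$ with a Borel representation $\tau=\phi(X_0,W,B)$, transplanting to $\tau^{i,N}:=\phi(X^i_0,W^i,B)$, and invoking "standard" propagation of chaos. But the paper's Proposition \ref{prop:propagation_of_chaos} explicitly requires $\varphi$ to be \emph{continuous}, because the final step of its proof (identifying the a.s.\ weak limit of $\Lc^\P(\Xo^{\tau^N_N},W,\tau^N_N\mid B)$ via stable-topology arguments) relies on continuity of the map $(\Xo_{0,N},W,B)\mapsto\phi(\Xo^{\tau^N_N},W,\tau^N_N)$. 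A Borel $\phi$ is not enough. The paper fills this gap via a non-trivial construction at the start of Proposition \ref{prop:limit_value_function}: approximate the stopping indicator process $L=\mathds{1}_{\{\tau<\cdot\}}$ in probability by monotone step processes built from continuous functions $\psi^n_i$, perturb by a strictly increasing drift $\cdot/3T$ to make the first-passage map continuous, and thereby produce $\F^{X_0,W,B}$-stopping times $\tau^{\widehat{L}^n+\cdot/3T}$ represented by continuous $\varphi_n$ converging to $\tau$ a.s. Your proposal skips this layer entirely.

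Second, your justification for item (v) of Definition \ref{def:strong_weak_stopping} — "conditional independence relations of this form are stable under $\Wc_p$-convergence" — is not valid as stated; conditional independence is in general \emph{not} preserved under weak convergence. The paper's verification is structural: it rewrites the conditional expectation $\E^{\P_N}[\phi(X,W,\tau\wedge t)\mid\Gc_T]$ using the fact that under $\P_N$ the canonical triple is a uniformly random particle, so this conditional expectation equals $\int\phi\,d\mu^N_t$, which is $\Gc_t$-measurable; passing to the limit and then using the already-established consistency condition (iv) closes the argument. Some substitute for this explicit computation is necessary; the abstract "stability" claim does not deliver it.

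Third, in part (iii) you need $V_S(\bar\nu_{N_k})\to V_S(\nu)$ and assert this is "a corollary of (ii) applied with i.i.d.\ samples from $\nu$." Applying (ii) with $\nu_i\equiv\nu$ only yields $V^N_S(\nu,\dots,\nu)\to V_S(\nu)$, which says nothing about $V_S(\bar\nu_{N_k})$. The paper instead proves this directly: take $\eps_N$-optimal weak rules $\P^N_W\in\Pc_W(\bar\nu_N)$, establish relative compactness of $\{\P^N_W\}$ by the same moment arguments, and sandwich $V_W(\nu)\ge J(\P^\infty_W)\ge\liminf_N V_W(\bar\nu_N)=\liminf_N V_S(\bar\nu_N)\ge V_S(\nu)$, invoking Proposition \ref{prop:limit_value_function} for the last step. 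You would need an argument of this type; the continuity of $V_S$ is a conclusion here, not an available hypothesis.
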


    \begin{remark}
        $\mathrm{(i)}$
        When $\ell = 0$, all of our problems and arguments reduce to the context without common noise. However, in the case where $\ell \neq 0$ and $\sigma_0 = 0$, the situation differs from the scenario without common noise. This is because the Brownian motion $B$ can be viewed as an external noise, and it may not be independent from $\tau$ in the weak formulation. Then the conditional distribution $\mu$ in Definition \ref{def:strong_weak_stopping} may not be reduced to a deterministic distribution. 
        This new problem is also a problem with common noise, but a very special one, i.e. the common noise $B$ doesn't appear in the dynamics directly (no $d B_t$ term), but the strategy (stopping time) can rely on the common noise. In other words, in a large population optimal stopping problem, the admissible stopping strategies of all players can observe some common noise together, and this common noise can only affect the dynamics through the stopping strategies taken.
        
        \noindent $\mathrm{(ii)}$
        Unlike the controlled case, there is no relaxed formulation for the mean field optimal stopping problem. The primary reason is that the weak limit of a sequence of stopping times will always be a randomized stopping time, which aligns with the weak formulation.
        
        \noindent $\mathrm{(iii)}$
        Inspired by \cite{TalbiTouziZhang3}, we can view a stopping time as a control with nonincreasing paths and taking values in $\{0,1\}$. In other words, it's a control with excellent path regularity. This insight led me to realize that the techniques in \cite{DjetePossamaiTan} can be used to solve the mean field optimal stopping problem with non-Markov dynamics and common noise.

        \noindent $\mathrm{(iv)}$
        Compared to the results in \cite{TalbiTouziZhang3} without common noise, the results in this paper are almost the same but required weaker regularity on the coefficients.
        For assumptions, they require additionally continuity in time of the drift and volatility $(b,\sigma)$, differentiability in space and probability measure variables of $\sigma$, and that $(b,\sigma, f, g)$ can be extended to $\Pc_1$ space with corresponding continuity.
        For main results, they consider the value functions starting from different time, and obtained the convergence of the value functions in the $N$-population problem involving different initial times for different numbers of population.
        For the method, they consider the corresponding obstacle problem and use comparison principle to prove the convergence, while this paper adopts purely probabilistic approach, mainly compactness of admissible sets, and equivalence of value functions between weak and strong formulations.

        \noindent $\mathrm{(v)}$
        The convergence rate of the limit theory is usually proved by PDE methods. The obstacle problem, corresponding to the mean field optimal stopping problem we consider in this paper, is a second order PDE on Wasserstein space, which is infinite dimensional.
        Its comparison principle, if proved, can be used to prove the convergence rate of our problem.
    \end{remark}

\section{Proofs}

\subsection{Equivalence between different formulations}
    In this subsection, we present technical proofs demonstrating the equivalence between different formulations of the mean field optimal stopping problem. The proof process is organized into three steps, where the main idea of the proof is inspired by \cite{DjetePossamaiTan}.
    In the first step, we prove that any weak stopped rule can be approximated by a sequence of weak stopped rules with finite state.
    In the second step, an equivalence between the weak and strong stopped rule with finite state is proved.
    In the last step, we show that any limit of strong stopped rules in the topology of weak convergence is a weak stopped rule.
    Furthermore, a weak (or strong) stopped rule with finite state generally refers to a weak (or strong) stopped rule such that the corresponding stopping time takes values in a finite state. The exact differences will become more apparent as we delve into the details.

    In our first step, recall that a weak stopped rule $\P \in \Pc_W(\nu)$ is defined in the filtered canonical space $(\Om, \Fc_T, \F)$.
    Then in probability space $(\Om, \F, \P)$ for any sequence of partitions $\pi_m: (t_i^m)_{i = 0}^m$ of $[0,T]$ with $0 = t_0^m < t_1^m < \cdots < t_m^m = T$ such that
    \begin{equation*}
        \lim_{m \to \infty}
            |\pi_m|
        ~ = ~
            0,
    \end{equation*}
    where $\sup_{1 \le i \le m}|t_{i}^m - t_{i - 1}^m|$.
    due to some technical reasons, we introduce a sequence of Brownian motions $(B^m, W^m)$  to prove Lemma \ref{lemma:equivalence_strong_weak_discrete} (the detailed reason will be explained in Remark \ref{rem:equivalence_strong_weak_discrete}), starting from time $t_1^m$ by
    \begin{equation*}
        B^m := B_{\cdot \vee t_1^m} - B_{t_1^m},
        ~
        W^m := W_{\cdot \vee t_1^m} - W_{t_1^m},
    \end{equation*}
    and filtrations $\F^m = (\Fc^m_t)_{t \in [0,T]}$, $\G^m = (\Gc^m_t)_{t \in [0,T]}$ by
    \begin{equation*}
        \Fc^m_t
            :=
        \sigma\big(X_{t \wedge \cdot}, W^m_{t \wedge \cdot}, B^m_{t \wedge \cdot}, \mu_t, \tau \wedge t\big),
        ~
        \Gc^m_t
            :=
        \sigma(B^m_{t \wedge \cdot}, \mu_t).
    \end{equation*} 

    \begin{lemma}\label{lemma:equivalence_approximation}
        Let Assumption \ref{ass:coefficient_reward} holds true.
        For any $\P \in \Pc_W(\nu)$,
        there exists a sequence of $\F$-stopping times $(\tau^m)_{m \in \N_+}$
        with
        \begin{equation*}
            \tau^m \wedge t = \tau^m \wedge t^m_i,
            ~\mbox{for}~
            t \in [t^m_i, t^m_{i + 1}),
            ~
            i = 0, 1, \cdots, m - 1,
        \end{equation*}
        such that
        \begin{equation}\label{eq:lemma_approximation}
            \lim_{m \to \infty}\sup_{\om \in \Om}|\tau^m(\om) - \tau(\om)| =
            0,
            ~
            \lim_{m \to \infty} \E^\P\bigg[\sup_{s \in [0,T]}|X^m_s - X_s|^p\bigg]
            =
            0,
        \end{equation}
        where $X^m$ is the unique strong solution of the following McKean-Vlasov SDE
        \begin{equation}\label{eq:SDE_weak_discrete}
            \begin{split}
            X^m_t
            ~ = ~
            X_0 + \int_{t^m_1}^{\tau^m \wedge t\vee t^m_1}b(s, X^m_{s\wedge \cdot}, \mu^m_s) dt
            & ~ + \int_{t^m_1}^{\tau^m \wedge t\vee t^m_1}\sigma(s, X^m_{s\wedge \cdot}, \mu^m_s) dW^m_s
            \\ & ~ + \int_{t^m_1}^{\tau^m \wedge t \vee t^m_1}\sigma_0(s, X_{s\wedge \cdot}, \mu^m_s) dB^m_s,
            ~
            t \in [0,T],
            \end{split}
        \end{equation}
        with
        \begin{equation*}
            \mu^m_t
            ~ := ~
            \Lc^\P((X^m_{t \wedge \cdot}, W^m, \tau^m \wedge t)|\Gc^m_t),
            ~
            t\in [0,T].
        \end{equation*}
        Then it follows that
        \begin{equation}\label{eq:convergence_law_m}
            \lim_{m \to \infty}
            \Wc_p\big(\Lc^\P(X^m, W^m, B^m, \mu^m_T, \tau^m),
                  \Lc^\P(X, W, B, \mu_T, \tau)\big)
            ~ = ~
            0.
        \end{equation}
        Moreover,
        it holds that that $(X_0, W^m)$ is $\P$-independent of $(B^m, \mu^m_T)$ and,
        that 
        \begin{align}\label{eq:H_hypothesis_m}
            & ~
            \E^\P[\mathds{1}_D|\Gc^m_T]
            ~ = ~
            \E^\P[\mathds{1}_D|\Gc^m_t],
            ~\mbox{for all}~
            t \in [0,T],
            D \in \Fc^m_t \vee \sigma(W^m),
        \\
        \label{eq:consistency_weak_m}
            & ~
            \mu_t
            =
            \Lc^\P((X_{t \wedge \cdot}, W, \tau \wedge t)|\Gc^m_t)
            =
            \Lc^\P((X_{t \wedge \cdot}, W, \tau \wedge t)|\Gc^m_T),
            ~\mbox{for all}~
            t \in [0,T],
            \\ \label{eq:consistency_m}
            & ~
            \mu^m_t
            = \Lc^\P((X^m_{t \wedge \cdot}, W^m, \tau^m \wedge t)|B^m_{t \wedge \cdot}, \mu^m_t)
            =
            \Lc^\P((X^m_{t \wedge \cdot}, W^m, \tau^m \wedge t)|B^m, \mu^m_T),
            ~\mbox{for all}~
            t \in [0,T].
        \end{align}
    \end{lemma}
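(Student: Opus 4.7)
The plan is to discretize $\tau$ along the partition $\pi_m$ to obtain $\tau^m$, solve the shifted McKean--Vlasov SDE to obtain $X^m$, and then transfer all the structural properties from $\P$ to the new objects using a Gronwall estimate together with the original $(H)$-hypothesis.

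First, I would define $\tau^m$ by rounding $\tau$ up to the next partition point, e.g., $\tau^m := \sum_{i=0}^{m-1} t^m_{i+1}\mathds{1}_{\{\tau \in (t^m_i, t^m_{i+1}]\}}$ (with $\tau^m := 0$ on $\{\tau = 0\}$), so that $\tau^m$ takes values in $\{t^m_0, \dots, t^m_m\}$, satisfies the prescribed piecewise-constancy property, and obeys $\sup_{\om} |\tau^m(\om) - \tau(\om)| \le |\pi_m| \to 0$. Right-continuity of the relevant filtration and measurability imply that $\tau^m$ is again an $\F_+$-stopping time. Given $\tau^m$, together with the shifted Brownian motions $(W^m, B^m)$ and the consistency constraint for $\mu^m$, I would invoke the standard strong well-posedness theory for McKean--Vlasov SDEs (e.g., Theorem A.3 in \cite{DjetePossamaiTan1}) to obtain a unique strong solution $X^m$ of \eqref{eq:SDE_weak_discrete}; the Lipschitz and growth conditions from Assumption \ref{ass:coefficient_reward} transfer verbatim to the shifted problem.

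For the $L^p$ convergence \eqref{eq:lemma_approximation}, I would decompose $X^m_t - X_t$ into three groups of error terms: the integrals missing on $[0, t^m_1]$ (vanishing because $t^m_1 \to 0$ and the coefficients have linear growth); the integrals over $[\tau \wedge t, \tau^m \wedge t]$ arising from the discrepancy of stopping times (vanishing because $\sup_\om|\tau^m - \tau| \le |\pi_m|$, combined with the $L^p$ bound on the integrands); and the Lipschitz differences between coefficients evaluated at $(X^m, \mu^m)$ versus $(X, \mu)$, controlled by $\|X^m - X\|$ and $\Wc_p(\mu^m_s, \mu_s)$. Applying Burkholder--Davis--Gundy, H\"older, and Gronwall inequalities, and using the fact that $\mu^m_s$ and $\mu_s$ are conditional laws of the very same processes whose $L^p$ proximity we are estimating, yields $\E^\P[\sup_{s \le T}|X^m_s - X_s|^p] \to 0$. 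The Wasserstein convergence \eqref{eq:convergence_law_m} then follows from the joint $L^p$ convergence of $(X^m, W^m, B^m, \tau^m)$ to $(X, W, B, \tau)$ together with continuity of conditional laws.

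The remaining structural properties follow from the original weak-stopped-rule structure. Independence of $(X_0, W^m)$ from $(B^m, \mu^m_T)$ is inherited from Definition \ref{def:strong_weak_stopping}(ii), since $W^m$ is measurable through $W$ while $B^m$ and $\mu^m_T$ are measurable functions of $(B, \mu, \tau^m, X, W)$. The shifted $(H)$-hypothesis \eqref{eq:H_hypothesis_m} is derived from \eqref{eq:H_hypothesis} by noting the inclusions $\Fc^m_t \vee \sigma(W^m) \subseteq \Fc_t \vee \Fc^W_T$ and $\Gc^m_s \subseteq \Gc_s$, then applying the tower property to reduce to the original $(H)$-hypothesis. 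Finally, \eqref{eq:consistency_weak_m} and \eqref{eq:consistency_m} are obtained by combining the original consistency \eqref{eq:consistency_weak} with the $(H)$-hypothesis: the first equality in each uses that $\mu_t$ (respectively $\mu^m_t$) is already measurable with respect to the smaller $\sigma$-algebra, while the second equality propagates the conditioning from $\Gc^m_t$ to $\Gc^m_T$ via the $(H)$-hypothesis. The main obstacle is precisely this propagation step: checking that enlarging the conditioning $\sigma$-algebra from $\Gc^m_t$ to $\Gc^m_T$ does not alter the conditional law of $(X_{t\wedge\cdot}, W, \tau \wedge t)$. This is the reason for introducing the shifted Brownian motions and starting the SDE at $t^m_1$, since it ensures $\sigma(W^m) \subseteq \Fc^W_T$, which is what makes the original $(H)$-hypothesis applicable to the events arising in the enlarged filtration argument.
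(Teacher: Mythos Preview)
Your overall strategy---discretize $\tau$, run a Gronwall/BDG argument for $X^m\to X$ in $L^p$, and then transfer the structural properties---is exactly what the paper does, and your treatment of the convergence claims \eqref{eq:lemma_approximation} and \eqref{eq:convergence_law_m} is essentially correct.

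There is, however, a genuine gap in your derivation of \eqref{eq:H_hypothesis_m}. The inclusions $\Fc^m_t\vee\sigma(W^m)\subseteq\Fc_t\vee\Fc^W_T$ and $\Gc^m_s\subseteq\Gc_s$ together with the tower property are \emph{not} enough to pass the $(H)$-hypothesis from $\G$ to $\G^m$. From $\E^\P[\mathds{1}_D|\Gc_T]=\E^\P[\mathds{1}_D|\Gc_t]$ and $\Gc^m_T\subseteq\Gc_T$ you get
\[
\E^\P[\mathds{1}_D|\Gc^m_T]=\E^\P\big[\E^\P[\mathds{1}_D|\Gc_t]\,\big|\,\Gc^m_T\big],
\]
but there is no general reason the right-hand side equals $\E^\P[\mathds{1}_D|\Gc^m_t]$: the inner random variable is only $\Gc_t$-measurable, and $\Gc_t=\Gc^m_t\vee\sigma(B_{t^m_1\wedge\cdot})$ is strictly larger than $\Gc^m_t$. (Toy counterexample: take $\Gc^m_t$ trivial and $\Gc^m_T=\Gc_t=\Gc_T=\sigma(Z)$.) The paper closes this gap by \emph{reversing the order}: it first proves the first equality of \eqref{eq:consistency_weak_m}, using exactly the observation you mention---that $\langle\phi,\mu_t\rangle$ is $\Gc^m_t$-measurable---together with tower to obtain
\[
\E^\P[\phi(X_{t\wedge\cdot},W,\tau\wedge t)\,|\,\Gc^m_t]
=\langle\phi,\mu_t\rangle
=\E^\P[\phi(X_{t\wedge\cdot},W,\tau\wedge t)\,|\,\Gc_t].
\]
This identity (extended to $D\in\Fc^m_t\vee\sigma(W^m)$ by pulling out the $\Gc^m_t$-measurable factors $B^m_{t\wedge\cdot}$ and $\mu_t$) is precisely what makes the final step $\E^\P[\E^\P[\mathds{1}_D|\Gc_t]\mathds{1}_A]=\E^\P[\E^\P[\mathds{1}_D|\Gc^m_t]\mathds{1}_A]$, $A\in\Gc^m_T$, go through. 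So the logical order matters: the first half of \eqref{eq:consistency_weak_m} must precede \eqref{eq:H_hypothesis_m}, not follow it.

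Two smaller points. Your independence argument is not right as written: saying $\mu^m_T$ is a function of $(B,\mu,\tau^m,X,W)$ does not help, since that list contains $W$. The correct observation is that $\mu^m_T$, being a conditional law given $\Gc^m_T=\sigma(B^m,\mu_T)$, is $\sigma(B,\mu)$-measurable, and $(B,\mu)\perp(X_0,W)$ by Definition~\ref{def:strong_weak_stopping}(ii). Finally, the stated rationale for the shifted Brownian motions is off (the inclusion $\sigma(W^m)\subseteq\Fc^W_T$ is trivial); as Remark~\ref{rem:equivalence_strong_weak_discrete} explains, the shift is introduced so that in the \emph{next} lemma the increments of $(W,B)$ on $[0,t^m_1]$ can serve as external randomization independent of $(X_0,W^m,B^m)$.
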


    \begin{proof}
        Let us define a sequence of stopping times $(\tau^m)_{m \in \N_+}$ directly by
        \begin{equation*}
            \tau^m
            ~ := ~
            \sum_{i = 0}^{m - 1}t^m_i \mathds{1}_{[t^m_i,t^m_{i + 1})}( (\tau + 1/m) \wedge T),
            ~
            m \in \N_+.
        \end{equation*}
        In the rest part of the proof, we verify that $(\tau^m)_{m \in \N_+}$ is our desired sequence of stopping times in the lemma.

        First, it is obvious to notice that
        \begin{equation*}
            \tau^m \wedge t = \tau^m \wedge t^m_i,
            ~\mbox{for}~
            t \in [t^m_i, t^m_{i + 1}),
            ~
            i = 0, 1, \cdots, m - 1,
        \end{equation*}
        and
        \begin{equation*}
            \lim_{m \to \infty}
                \sup_{\om \in \Om}|\tau^m(\om) - \tau(\om)|
            ~ \le ~
            \lim_{m \to \infty}
                \sup_{0 \le i \le m - 1}|t^m_{i + 1} - t^m_i| + \frac{1}{m}
            ~ = ~
            0.
        \end{equation*}
        Next, we will prove the convergence of $X^m$ to $X$ in the sense of
        \begin{equation*}
             \lim_{m \to \infty} \E^\P\bigg[\sup_{s \in [0,T]}|X^m_s - X_s|^p\bigg]
            =
            0,
        \end{equation*}
        by standard arguments.
        In the following part of the proof, we introduce the function $I^\theta_t := \mathds{1}_{[0,\theta]}(t)$,
        for all $\theta, t \in [0,T]$.
        There exists a constant $C > 0$, which may vary from line to line, such that
        \begin{align*}
            & ~
                \E^\P\bigg[\sup_{s \in [0,T]}|X^m_s - X_s|^p\bigg]
            \\ ~ \le & ~
                \E^\P\bigg[\sup_{s \in [0,t^m_1]}|X^m_s - X_s|^p
                      +  \sup_{s \in [t^m_1,T]}|X^m_s - X_s|^p\bigg]
            \\ ~ \le & ~
                C\E^\P\bigg[\sup_{s \in [0,t^m_1]}|X_0 - X_s|^p
                      +  \int_{t^m_1}^{T}|
                                (b, \sigma, \sigma_0)(s, X_{s\wedge\cdot}, \mu_s)I^\tau_s
                                - (b, \sigma, \sigma_0)(s, X^m_{s\wedge\cdot}, \mu^m_s)I^{\tau^m}_s |^p ds\bigg]
            \\ ~ \le & ~
                C\E^\P\bigg[\sup_{s \in [0,t^m_1]}|X_0 - X_s|^p
                + \int_{t^m_1}^{T}\sup_{t^m_1 \le r \le s}|X^m_r - X_r|^p + \Wc^p_p(\mu_s, \mu^m_s) ds
                \\ & ~~~~~~~~~~~~~~~~~~~~~~~~~~~~~~~~
                + \int_{t^m_1}^{T}
                    |(b, \sigma, \sigma_0)(s, X_{s\wedge\cdot}, \mu_s)|^p|I^\tau_s - I^{\tau^m}_s| ds\bigg]
            \\ ~ \le & ~
                C\E^\P\bigg[\sup_{s \in [0,t^m_1]}|X_0 - X_s|^p
                + \sup_{0 \le s \le T}|W^m_s - W_s|^p 
                + \int_{t^m_1}^{T} \sup_{0 \le r \le s}|X^m_r - X_r|^p ds  
                \\& \quad\quad\qquad\quad
                + \bigg(1 + \sup_{s \in [0,t^m_1]}|X_s|^p + 
                \int_{\Cc^n \x \Cc^d \x [0,T]}\big(\|x\|^p + \|w\|^p \big) \mu_T(dx,dw,dt)\bigg)|\tau^m - \tau|\bigg]
            \\ ~ \le & ~
                C\E^\P\bigg[\sup_{s \in [0,t^m_1]}|X_0 - X_s|^p
                + \sup_{0 \le s \le T}|W^m_s - W_s|^p 
                + \int_{t^m_1}^{T} \sup_{0 \le r \le s}|X^m_r - X_r|^p ds  
                \\& \qquad \qquad\qquad\qquad\qquad\quad
                + \bigg(1 + \sup_{s \in [0,t^m_1]}|X_s|^p + 
                \E[\|X\|^p + \|W\|^p|\Gc_T]\bigg)(|\pi_m| + \frac{1}{m})\bigg]
            \\ ~ = & ~
                C\E^\P\bigg[\sup_{s \in [0,t^m_1]}|X_0 - X_s|^p
                + \sup_{0 \le s \le T}|W^m_s - W_s|^p 
                + \int_{t^m_1}^{T} \sup_{0 \le r \le s}|X^m_r - X_r|^p ds  
                \\& \qquad \qquad\qquad\qquad\qquad\quad
                + \bigg(1 + \E\bigg[\sup_{s \in [0,t^m_1]}|X_s|^p\bigg] + 
                \E[\|X\|^p + \|W\|^p]\bigg)(|\pi_m| + \frac{1}{m})\bigg]
            \\ ~ \le & ~
                C\E^\P\bigg[\sup_{s \in [0,t^m_1]}|X_0 - X_s|^p
                + (|\pi_m| + \frac{1}{m}) + \int_{0}^{T} \sup_{0 \le r \le s}|X^m_r - X_r|^p ds\bigg],
        \end{align*}
        where
        the second inequality follows by Burkholder-Davis-Gundy inequality, Hölder's inequality, 
        the third one by Lipschitz propetry of $(b, \sigma, \sigma_0)$,
         the fourth one by the definition of Wasserstein distance, $\mu_t = \Lc^\P(X_{t\wedge \cdot}, W, \tau\wedge t| \Gc^m_t)$ $\P$-a.s. for all $t \in [0,T]$, the Lipschitz of $(b, \sigma, \sigma_0)$,
         and the last one by $L^p$ boundedness of the $X$ under $\P$.

        Since the above estimation still holds true if we replace $T$ with any time $t \in [t^m_1,T]$, one has by Grownwall's lemma that
        \begin{equation*}
            \E^\P\bigg[\sup_{s \in [0,T]}|X^m_s - X_s|^p\bigg]
            ~ \le ~
            C\E^\P\bigg[\sup_{s \in [0,t^m_1]}|X_0 - X_s|^p
                +  (|\pi_m| + \frac{1}{m})\bigg]e^{CT}.
        \end{equation*}
        Then we can conclude our proof of \eqref{eq:convergence_law_m}, since it follows that
        \begin{align*}
            & ~
            \Wc^p_p\big(\Lc^\P(X^m, W^m, B^m, \mu^m_T, \tau^m),
                  \Lc^\P(X, W, B, \mu_T, \tau)\big)
            \\ ~ \le & ~
            2\E^{\P}\bigg[\sup_{s \in [0,T]}|X^m_s - X_s|^p + \sup_{s \in [0,T]}|W^m_s - W_s|^p + \sup_{s \in [0,T]}|B^m_s - B_s|^p + |\tau^m - \tau|^p\bigg].
        \end{align*}

        \vspace{0.5em}

        As for the second part, we first verify the equalities in \eqref{eq:consistency_weak_m}, \eqref{eq:H_hypothesis_m} and then \eqref{eq:consistency_m}.

        As for \eqref{eq:consistency_weak_m}, for any $\phi \in C_b(\Cc^n \x \Cc^d \x [0,T])$ and $\varphi \in C_b(\Cc^\ell \x \Pc_p(\Cc^n \x \Cc^d \x [0,T]))$, one has that, for each $t \in [0,T]$
        \begin{align*}
            ~ 
            \E^\P[
            \langle \phi,\mu_t \rangle
            \varphi(B,\mu_t)]
             = ~ &
            \E^\P[\E^\P[
            \langle \phi,\mu_t \rangle
            \varphi(B,\mu_t)|\Gc^m_t]]
             = ~ 
            \E^\P[
            \langle \phi,\mu_t \rangle
            \E^\P[\varphi(B,\mu_t)|\Gc^m_t]]
            \\ = ~ &
            \E^\P[
            \E^\P[\phi(X_{t \wedge \cdot}, W, \tau \wedge t)|\Gc_t]
            \E^\P[\varphi(B,\mu_t)|\Gc^m_t]
            ]
            \\ = ~ & 
            \E^\P[
            \phi(X_{t \wedge \cdot}, W, \tau \wedge t)
            \E^\P[\varphi(B,\mu_t)|\Gc^m_t]
            ]
            \\ = ~ &
            \E^\P[
            \E^\P[\phi(X_{t \wedge \cdot}, W, \tau \wedge t)|\Gc^m_t]
            \varphi(B,\mu_t)].
        \end{align*}
        Since the bounded continuous functions $\phi$, $\varphi$ are arbitrary and that the equality \eqref{eq:H_hypothesis} holds true, the equalities in \eqref{eq:consistency_weak_m} follow.

        For \eqref{eq:H_hypothesis_m}, it is direct to observe that $(X_0, W^m)$ is $\P$-independent of $(B^m, \mu^m_T)$, and \eqref{eq:H_hypothesis_m} follows by the argument below.
        For any $D \in \Fc^m_t \vee \sigma(W^m)$, $A \in \Gc^m_T$,
        \begin{align*}
            \E^\P[
            \mathds{1}_D\mathds{1}_A
            ]
            ~ = ~ 
            \E^\P[
                \E^\P[\mathds{1}_D|\Gc_T]\mathds{1}_A
                ]
            ~ = ~
            \E^\P[
                \E^\P[\mathds{1}_D|\Gc_t]\mathds{1}_A
                ]
            ~ = ~
            \E^\P[
                \E^\P[\mathds{1}_D|\Gc^m_t]\mathds{1}_A
                ],
        \end{align*}
        where the last equality holds by a similar argument for \eqref{eq:consistency_weak_m}.

        As for \eqref{eq:consistency_m}, by \eqref{eq:H_hypothesis_m}, it holds that, for each $t \in [0,T]$,
        \begin{equation*}
            \mu^m_t
            = \Lc^\P((X^m_{t \wedge \cdot}, W^m, \tau^m \wedge t)|\Gc^m_T).
        \end{equation*}
        Then the \eqref{eq:consistency_m} hold by a similar argument for \eqref{eq:consistency_weak_m}.
    \end{proof}

    Then in our second step, we prove the equivalence between weak stopped rule, which is rigorously defined in the statement of Lemma \ref{lemma:equivalence_approximation} and strong stopped rule with finite state.
    \begin{lemma}\label{lemma:equivalence_strong_weak_discrete}
        Let Assumption \ref{ass:coefficient_reward} holds true.
        For any $\P \in \Pc_W(\nu)$, any partition $\pi^m$ of $[0,T]$,
        and $\ell > 0$, there exists a $\F^{X_0, W, B}$-stopping time $\taut^m$
        such that
        \begin{equation*}
            \Lc^\P(\Xt^m, W^m, B^m, \mut^m, \taut^m)
            ~ = ~
            \Lc^\P(X^m, W^m, B^m, \mu^m, \tau^m),
        \end{equation*}
        where $\Xt^m$ is the unique solution of the following McKean-Vlasov SDE
        \begin{equation}\label{eq:SDE_strong_discrete}
            \begin{split}
            \Xt^m_t
            ~ = ~
            X_0 + \int_{t^m_1}^{t \vee t^m_1}b(s, \Xt^m_{s\wedge \cdot}, \mut^m_s)I^{\taut^m}_s dt
            & ~ + \int_{t^m_1}^{t \vee t^m_1}\sigma(s, \Xt^m_{s\wedge \cdot}, \mut^m_s)I^{\taut^m}_s dW^m_s
            \\ & ~ + \int_{t^m_1}^{t \vee t^m_1}\sigma_0(s, \Xt_{s\wedge \cdot}, \mut^m_s)I^{\taut^m}_s dB^m_s,
            ~
            t \in [0,T],
            \end{split}
        \end{equation}
        with for each $t \in [0,T]$, $\mut^m_t$ is defined by
        \begin{equation}\label{eq:consistency_m_strong}
            \mut^m_t
            ~ := ~
            \Lc^\P((\Xt^m_{t \wedge \cdot}, W^m, \taut^m \wedge t)|B),
            ~
            t\in [0,T].
        \end{equation}
    \end{lemma}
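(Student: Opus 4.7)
The plan is to construct $\taut^m$ on the same canonical space by exploiting the fact that the initial Brownian segment $W|_{[0, t^m_1]}$ does not enter the SDE \eqref{eq:SDE_weak_discrete} (whose integrals all start at $t^m_1$) and can therefore serve as an auxiliary randomization device while remaining measurable with respect to $\F^{X_0, W, B}$. Since the stopping time $\tau^m$ produced by Lemma \ref{lemma:equivalence_approximation} takes values in the finite grid $\{t^m_0, \ldots, t^m_m\}$, its ``weak'' randomness is carried by only finitely many conditional probabilities at the grid points, which can be resimulated using a finite family of uniform variables extracted from $W|_{[0, t^m_1]}$.

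First, I would observe that $W|_{[0, t^m_1]}$ is $\P$-independent of $(X_0, W^m, B, \mu)$: independence from $W^m$ follows from the independence of Brownian increments on disjoint intervals, and independence from $(X_0, B, \mu)$ follows from the weak stopped rule requirement that $(B, \mu)$ is independent of $(X_0, W)$. Because $W|_{[0, t^m_1]}$ takes values in a standard Borel space with a non-atomic distribution, a Borel isomorphism provides an i.i.d.\ family $U_1, \ldots, U_m$ of $\mathrm{Unif}[0,1]$ random variables, each a Borel function of $W|_{[0, t^m_1]}$ and hence $\Fc^{X_0, W, B}_{t^m_1}$-measurable and independent of $(X_0, W^m, B, \mu)$.

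Next, I would identify the regular conditional distribution of $\tau^m$ given $(X_0, W^m, B)$ under $\P$. Because $\tau^m$ is piecewise constant on the grid, this distribution is a probability kernel into $\{t^m_0, \ldots, t^m_m\}$; by a standard randomization lemma for probability kernels on Polish spaces there exists a Borel map $\Psi: \R^n \times \Cc^d \times \Cc^\ell \times [0,1]^m \to \{t^m_0, \ldots, t^m_m\}$ such that $\taut^m := \Psi(X_0, W^m, B, U_1, \ldots, U_m)$ has the same conditional law given $(X_0, W^m, B)$ as $\tau^m$ does under $\P$. Arranging $\Psi$ so that its $i$-th stage depends only on data measurable with respect to $\Fc^{X_0, W, B}_{t^m_i}$ makes $\taut^m$ an $\F^{X_0, W, B}$-stopping time. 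Defining $\Xt^m$ as the unique strong solution of \eqref{eq:SDE_strong_discrete} with $\mut^m$ given by \eqref{eq:consistency_m_strong} completes the construction; well-posedness here follows from the Lipschitz assumptions in Assumption \ref{ass:coefficient_reward}.

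The main obstacle is verifying the full joint-law identity $\Lc^\P(\Xt^m, W^m, B^m, \mut^m, \taut^m) = \Lc^\P(X^m, W^m, B^m, \mu^m, \tau^m)$: by construction the marginal $(X_0, W^m, B^m, \taut^m)$ already has the correct joint distribution, but one must also show that the conditional law of $(\Xt^m, W^m, \taut^m \wedge \cdot)$ given $B$, namely $\mut^m$, matches the conditional law of $(X^m, W^m, \tau^m \wedge \cdot)$ given $\Gc^m_T$, namely $\mu^m$. This is precisely where the $(H)$-hypothesis-type condition \eqref{eq:H_hypothesis_m} together with \eqref{eq:consistency_weak_m}--\eqref{eq:consistency_m} enters: it ensures that the ``extra'' $\mu_T$-dependence present in the weak formulation can be collapsed onto $B$-dependence once $\tau^m$ is replaced by a strong stopping time with the same joint law against $(X_0, W^m, B)$. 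Once this reduction is carried out, uniqueness of the strong solution to the McKean--Vlasov SDE propagates the law equality to the entire solution process, closing the argument.
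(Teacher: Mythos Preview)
There is a genuine gap. Your construction only randomizes $\tau^m$ (via $W|_{[0,t^m_1]}$) so as to match the conditional law of $\tau^m$ given $(X_0,W^m,B)$; you then \emph{define} $\mut^m_t$ as the conditional law given $B$. But in the weak object, $\mu^m_t$ is the conditional law given $\Gc^m_t=\sigma(B^m_{t\wedge\cdot},\mu_t)$, and this random measure carries genuine randomness coming from $\mu_T$ that is \emph{not} $\sigma(B)$-measurable in general. If, conditionally on $B^m$, the weak rule allows $\mu_T$ to take several values, then $\mu^m_T$ also takes several values, whereas your $\mut^m_T=\Lc^\P(\cdot\,|\,B)$ is a single (averaged) value; the joint laws $(B^m,\mu^m_T)$ and $(B^m,\mut^m_T)$ then differ, and the full joint-law identity fails. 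The $(H)$-hypothesis \eqref{eq:H_hypothesis_m} and the identities \eqref{eq:consistency_weak_m}--\eqref{eq:consistency_m} do \emph{not} ``collapse'' $\mu_T$-dependence onto $B$-dependence: they only say that conditioning on $\Gc^m_T$ and on $\Gc^m_t$ agree on the relevant $\sigma$-field, not that $\Gc^m_T\subset\sigma(B)$.

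The paper's proof closes this gap by using a \emph{second} source of auxiliary randomness, namely the increments of $B$ on $[0,t^m_1]$ (encoded as i.i.d.\ uniforms $\gamma^B_i$), to build $\mut^m_i$ recursively through kernels $G^\mu_i(B^m_{t^m_i\wedge\cdot},\mut^m_{i-1},\gamma^B_i)$ that reproduce the law of $\mu^m_{t^m_i}$ given $(B^m_{t^m_i\wedge\cdot},\mu^m_{t^m_{i-1}})$. Only after $\mut^m_i$ is in place does the paper construct $\taut^m_i$ via kernels $G^\tau_i$ that condition on $(X_0,W^m_{t^m_i\wedge\cdot},B^m_{t^m_i\wedge\cdot},\mut^m_i,(\taut^m_j)_{j<i})$, using the uniforms $\gamma^W_i$ from $W|_{[0,t^m_1]}$. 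The joint-law identity \eqref{eq:joint_law_inductive} is then verified by induction, where \eqref{eq:H_hypothesis_m} is used to decouple the future increments of $(W^m,B^m)$ from the conditional law at time $t^m_i$. Finally, because $\mut^m$ is by construction $\sigma(B)$-measurable, conditioning on $(B^m,\mut^m_T)$ coincides with conditioning on $B$, which yields \eqref{eq:consistency_m_strong}. Your outline is missing the entire $\mut^m$-construction step and the corresponding inductive verification.
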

    \begin{remark}\label{rem:equivalence_strong_weak_discrete}
        Without introducing the new Brownian motions $(B^m, W^m)$, we have to replace every $(B^m,W^m)$ in 
        Lemma \ref{lemma:equivalence_strong_weak_discrete} with $(B,W)$.
        However, in this case, the corresponding Lemma \ref{lemma:equivalence_strong_weak_discrete} is not true. 
        The reason is that, in the following proof we use Lemma 4.7 in El Karoui and Tan \cite{ElKarouiTan2}, which required extra randomness $U$, which is independent from $(X_0, W^m, B^m)$, in $\taut^m$. In this case if $(W^m, B^m)$ is replaced by $(W,B)$, $\taut^m$ is not a $\F^{X_0,W,B}$-stopping time. 
        With introducing $(W^m, B^m)$, our strategy is take $U$ to be a function of $(W - W^m, B - B^m)$.
    \end{remark}

    \begin{proof}
        \textbf{Step.1}
        We first construct $\mut^m$, and $\taut^m$ at each discrete time points $\{t^m_0, \cdots, t^m_m\}$, which are given by the partition $\pi_m$, such that for all $i = 1, \cdots, m$,
        \begin{equation}\label{eq:joint_law_inductive}
            \Lc^\P(X_0, W^m_{t^m_i \wedge \cdot}, B^m_{t^m_i \wedge \cdot}, \mut^m_i, (\taut^m_j)_{j = 1}^{i})
            ~ = ~
            \Lc^\P(X_0, W^m_{t^m_i \wedge \cdot}, B^m_{t^m_i \wedge \cdot}, \mu^m_{t^m_{i}}, (\tau^m_j)_{j = 1}^{i}).
        \end{equation}
        By similar arguments of Lemma 4.7 in El Karoui and Tan \cite{ElKarouiTan2}, for $i = 1, \cdots, m$,there exists a sequence of Borel measurable functions
        $$
            G^\mu_i: \Cc^\ell \x \Pc_p(\Cc^n \x \Cc^d \x [0,T]) \x [0,1] \longrightarrow \Pc_p(\Cc^n \x \Cc^d \x [0,T]),
        $$
        such that, for any sequence of i.i.d uniform random variables $U^m = (U^m_i)_{1 \le i \le m}$ on $[0,1]$ independent of $B^m$, $W$, $X_0$ and $\mu^m$, one has that
        \begin{equation}\label{eq:G_i}
            \Lc^\P(B^m_{t^m_i \wedge \cdot}, \mu^m_{t^m_{i - 1}}, \widehat{G}^\mu_i)
            ~ = ~
            \Lc^\P(B^m_{t^m_i \wedge \cdot}, \mu^m_{t^m_{i - 1}}, \mu^m_{t^m_i}),
        \end{equation}
        where $\widehat{G}^\mu_i = G^\mu_i(B^m_{t^m_i \wedge \cdot}, \mu^m_{t^m_{i - 1}}, U^m_i)$.

        Similarly, for $i = 1, \cdots, m$, there exists a sequence of Borel measurable functions
        $$
            G^\tau_i: \R^n \x \Cc^d \x \Cc^\ell \x \Pc(\Cc^n \x [0,T]) \x [0,T]^{i - 1} \x [0,1] \longrightarrow [0,T],
        $$
        such that, for any sequence of i.i.d uniform random variables $V^m = (V^m_i)_{i = 1}^m$ on $[0,1]$ independent of $B^m$, $W^m$, $\mu^m$, $X_0$ and $\tau^m$, one has that
        \begin{equation}\label{eq:H_i}
            \Lc^\P(X_0, W^m_{t^m_i \wedge \cdot}, B^m_{t^m_i \wedge \cdot}, \mu^m_{t^m_{i}}, (\tau^m_j)_{j = 1}^{i - 1}, \widehat{G}^\tau_i)
            ~ = ~
            \Lc^\P(X_0, W^m_{t^m_i \wedge \cdot}, B^m_{t^m_i \wedge \cdot}, \mu^m_{t^m_{i}}, (\tau^m_j)_{j = 1}^{i - 1}, \tau^m_i),
        \end{equation}
        where we take $\tau^m_j := t^m_j \mathds{1}_{\{t^m_j\}}(\tau^m)$, for $j = 1, \cdots, m$, define for convention that $(\tau^m_j)_{j = 1}^0 := 0$ ,
        and $\widehat{G}^\tau_i := G^\tau_i(X_0, W^m_{t^m_i \wedge \cdot}, B^m_{t^m_i \wedge \cdot}, \mu^m_{t^m_{i}}, (\tau^m_j)_{j = 1}^{i - 1}, V^m_i)$.

        Moreover, there exists some Borel measurable function $\gamma : \R^n \longrightarrow [0,1]$ such that
        $\gamma^W_i := \gamma(W_{it^m_1/m} - W_{(i - 1)t^m_1/m})$,
        $\gamma^B_i := \gamma(B_{it^m_1/m} - B_{(i - 1)t^m_1/m})$
        are  uniform distributed random variables for all $i = 1, 2, \cdots, m$.

        Based on the above preparations, we first define that $\mut^m_0 := \mu^m_0 = \mu_0$,
        $\taut^m_{0} := 0$,
        and then define inductively, for all $i = 1, \cdots, m$, that
        \begin{align*}
            \mut^m_i
             :=
            G^\mu_i(B^m_{t^m_i \wedge \cdot}, \mut^m_{i - 1}, \gamma^B_i),
            ~
            \taut^m_i
             :=
            G^\tau_i(X_0, W^m_{t^m_i \wedge \cdot}, B^m_{t^m_i \wedge \cdot}, \mut^m_{t^m_{i}}, (\taut^m_j)_{j = 1}^{i - 1}, \gamma^W_i).
        \end{align*}
        We may notice that
        $\mut^m_i$ is $\sigma(\gamma^B_1, \cdots, \gamma^B_i, B^m_{t^m_i \wedge \cdot})$-measurable
        and $\taut^m_i$ is $\sigma(X_0, \mut^m_i, W_{t^m_i \wedge \cdot}, B^m_{t^m_i\wedge\cdot})$-measurable
        for all $i = 1, \cdots, m$.

        \noindent\textbf{Step.2} 
        we can inductively verify \eqref{eq:joint_law_inductive}.

        In fact, in the case that $i = 1$, one has that
        \begin{align*}
            & ~ \Lc^\P(X_0, W^m_{t^m_1 \wedge \cdot}, B^m_{t^m_1 \wedge \cdot}, \mut^m_1, \taut^m_1)
            \\ ~ = & ~
            \Lc^\P(X_0, W^m_{t^m_1 \wedge \cdot}, B^m_{t^m_1 \wedge \cdot}, \mut^m_1,
            G^\tau_1(X_0, W^m_{t^m_1 \wedge \cdot}, B^m_{t^m_1 \wedge \cdot}, \mut^m_1, 0, \gamma^W_1))
            \\ ~ = & ~
            \Lc^\P(X_0, W^m_{t^m_1 \wedge \cdot}, B^m_{t^m_1 \wedge \cdot}, G^\mu_1(B^m_{t^m_1\wedge \cdot}, \mu^m_0, U^m_1),
                \\ & ~~~~~~~~~~~~~~~
                 G^\tau_1(X_0, W^m_{t^m_1 \wedge \cdot}, B^m_{t^m_1 \wedge \cdot}, G^\mu_1(B^m_{t^m_1\wedge \cdot}, \mu^m_0, U^m_1), 0, \gamma^W_1))
            \\ ~ = & ~
            \Lc^\P(X_0, W^m_{t^m_1 \wedge \cdot}, B^m_{t^m_1 \wedge \cdot}, \mu^m_{t^m_1},
            H_1(X_0, W^m_{t^m_1 \wedge \cdot}, B^m_{t^m_1 \wedge \cdot}, \mu^m_{t^m_1}, 0, \gamma^W_1))
            \\ ~ = & ~
            \Lc^\P(X_0, W^m_{t^m_1 \wedge \cdot}, B^m_{t^m_1 \wedge \cdot}, \mu^m_{t^m_{1}}, \tau^m_1),
        \end{align*}
        where the first two equalities follow by the definitions of $\taut^m_1$ and $\mut^m_1$
        and the last two equalities follow by \eqref{eq:G_i} and \eqref{eq:H_i}.

        Next, we assume that \eqref{eq:joint_law_inductive} holds true for some $i = 1, \cdots, m-1$.

        We may first claim that
        \begin{equation*}
            \Lc^\P(X_0, W^m_{t^m_{i + 1} \wedge \cdot}, B^m_{t^m_{i + 1} \wedge \cdot},
                \mu^m_{t^m_{i + 1}},
                (\tau^m_{j})_{j = 1}^i)
            ~ = ~
            \Lc^\P(X_0, W^m_{t^m_{i + 1} \wedge \cdot}, B^m_{t^m_{i + 1} \wedge \cdot},
                \mut^m_{i + 1},
                (\taut^m_{j})_{j = 1}^{i}).
        \end{equation*}
        The difference of this equality and \eqref{eq:joint_law_inductive} is that if we replace $i$ in $(\tau^m_{j})_{j = 1}^i$, and $(\taut^m_{j})_{j = 1}^{i}$ with $i + 1$, then this equality becomes \eqref{eq:joint_law_inductive}.

        In fact, for any
        $\phi_1 \in C_b(\R^n \x \Cc^d \x \Cc^\ell \x \Pc_p(\Cc^n \x \Cc^d \x [0,T]) \x [0,T]^i),
        \phi_2 \in C_b(\Pc_p(\Cc^n \x \Cc^d \x [0,T])),
        \psi_1 \in C_b(\Cc^d),
        \psi_2 \in C_b(\Cc^\ell),$
        one has that
        \begin{align*}
            & ~
                \E^\P\Big[
                \phi_1\big(X_0,W^m_{t^m_{i} \wedge \cdot}, B^m_{t^m_{i} \wedge \cdot}, \mu^m_{t^m_i}, (\tau^m_j)_{j = 1}^i \big)
                \phi_2\big(\mu^m_{t^m_{i + 1}}\big)
                \psi_1\big(W^m_{t^m_{i + 1} \wedge \cdot} - W^m_{t^m_{i} \wedge \cdot}\big)
                \psi_2\big(B^m_{t^m_{i + 1} \wedge \cdot} - B^m_{t^m_{i} \wedge \cdot}\big)\Big]
            \\ = & ~
                \E^\P\Big[\E^\P\Big[
                \phi_1\big(X_0,W^m_{t^m_{i} \wedge \cdot}, B^m_{t^m_{i} \wedge \cdot}, \mu^m_{t^m_i}, (\tau^m_j)_{j = 1}^i \big)
                \psi_1\Big|B^m, \mu^m\Big]
                \phi_2\big(\mu^m_{t^m_{i + 1}}\big)
                \psi_2\Big]
            \\  = & ~
                \E^\P\Big[\E^\P\Big[
                \phi_1\big(X_0,W^m_{t^m_{i} \wedge \cdot}, B^m_{t^m_{i} \wedge \cdot}, \mu^m_{t^m_i}, (\tau^m_j)_{j = 1}^i \big)
                \psi_1\Big|B^m_{t^m_{i} \wedge \cdot}, \mu^m_{t^m_{i}}\Big]
                \phi_2\big(\mu^m_{t^m_{i + 1}}\big)
                \psi_2\Big]
            \\ = & ~
                \E^\P\Big[\E^\P\Big[
                \phi_1\big(X_0,W^m_{t^m_{i} \wedge \cdot}, B^m_{t^m_{i} \wedge \cdot}, \mu^m_{t^m_i}, (\tau^m_j)_{j = 1}^i \big)
                \psi_1\Big|B^m_{t^m_{i} \wedge \cdot}, \mu^m_{t^m_{i}}\Big]
                \phi_2\big(\Gh^\mu_{i + 1}\big)
                \psi_2\Big]
            \\ = & ~
                \E^\P\Big[
                \phi_1\big(X_0,W^m_{t^m_{i} \wedge \cdot}, B^m_{t^m_{i} \wedge \cdot}, \mut^m_{t^m_i}, (\taut^m_j)_{j = 1}^i \big)
                \phi_2\big(\mut^m_{t^m_{i + 1}}\big)
                \psi_1
                \psi_2\Big],
        \end{align*}
        where the first equality follows by tower property of expectation, the second one by \eqref{eq:H_hypothesis_m}, the third one by
        \eqref{eq:G_i} and the last one by the following argument:

        For any $\varphi_1 \in C_b(\Cc^\ell \x \Pc_p(\Cc^n \x \Cc^d \x [0,T])), \varphi_2 \in C_b([0,1])$
        \begin{align*}
           & ~
                \E^\P\Big[\E^\P\Big[
                \phi_1\big(X_0,W^m_{t^m_{i} \wedge \cdot}, B^m_{t^m_{i} \wedge \cdot}, \mu^m_{t^m_i}, (\tau^m_j)_{j = 1}^i \big)
                \psi_1\Big|B^m_{t^m_{i} \wedge \cdot}, \mu^m_{t^m_{i}}\Big]
                \varphi_1(B^m_{t^m_i \wedge \cdot}, \mu^m_{t^m_i})
                \varphi_2(U^m_{i + 1})
                \psi_2\Big]
           \\ = & ~
                \E^\P\Big[\E^\P\Big[
                \phi_1\big(X_0,W^m_{t^m_{i} \wedge \cdot}, B^m_{t^m_{i} \wedge \cdot}, \mu^m_{t^m_i}, (\tau^m_j)_{j = 1}^i \big)
                \psi_1
                \varphi_1(B^m_{t^m_i \wedge \cdot}, \mu^m_{t^m_i})
                \Big|B^m_{t^m_{i} \wedge \cdot}, \mu^m_{t^m_{i}}\Big]\psi_2\Big]
                \E^\P\Big[\varphi_2(U^m_{i + 1})\Big]
           \\ = & ~
                \E^\P\Big[\E^\P\Big[
                \phi_1\big(X_0,W^m_{t^m_{i} \wedge \cdot}, B^m_{t^m_{i} \wedge \cdot}, \mu^m_{t^m_i}, (\tau^m_j)_{j = 1}^i \big)
                \psi_1
                \varphi_1(B^m_{t^m_i \wedge \cdot}, \mu^m_{t^m_i})
                \Big|B^m, \mu^m\Big]\psi_2\Big]
                \E^\P\Big[\varphi_2(U^m_{i + 1})\Big]
           \\ = & ~
                \E^\P\Big[
                \phi_1\big(X_0,W^m_{t^m_{i} \wedge \cdot}, B^m_{t^m_{i} \wedge \cdot}, \mu^m_{t^m_i}, (\tau^m_j)_{j = 1}^i \big)
                \psi_1
                \varphi_1(B^m_{t^m_i \wedge \cdot}, \mu^m_{t^m_i})
                \psi_2\Big]
                \E^\P\Big[\varphi_2(U^m_{i + 1})\Big]
           \\ = & ~
                \E^\P\Big[
                \phi_1\big(X_0,W^m_{t^m_{i} \wedge \cdot}, B^m_{t^m_{i} \wedge \cdot}, \mut^m_{_i}, (\taut^m_j)_{j = 1}^i \big)
                \psi_1
                \varphi_1(B^m_{t^m_i \wedge \cdot}, \mut^m_{t^m_i})
                \psi_2\Big]
                \E^\P\Big[\varphi_2(\gamma^B_{i + 1})\Big]
           \\ = & ~
                \E^\P\Big[
                \phi_1\big(X_0,W^m_{t^m_{i} \wedge \cdot}, B^m_{t^m_{i} \wedge \cdot}, \mut^m_{_i}, (\taut^m_j)_{j = 1}^i \big)
                \psi_1
                \varphi_1(B^m_{t^m_i \wedge \cdot}, \mut^m_{t^m_i})
                \varphi_2(\gamma^B_{i + 1})
                \psi_2\Big],
        \end{align*}
        where the first equality follows by $U^m_{i + 1}$ independent of $(B^m_{t^m_{i + 1}\wedge \cdot}, \mu^)$,
        the second one by \eqref{eq:H_hypothesis_m},
        the fourth one by the inductive assumption
        and the last one by $\gamma^B_{i + 1}$ independent of $(X_0, W, B^m)$.

        Now, we can turn to prove \eqref{eq:joint_law_inductive} holds true for $i = m$.
        The arbitrariness of $\varphi_1$ and $\varphi_2$ allow us to replace
        $\varphi_1(B^m_{t^m_i \wedge \cdot}, \mu^m_{t^m_i})
                \varphi_2(U^m_{i + 1})$
        and
        $\varphi_1(B^m_{t^m_i \wedge \cdot}, \mut^m_{t^m_i})
                \varphi_2(\gamma^B_{i + 1})$
        with $\phi_2(\Gh^\mu_{i + 1})$ and $\phi_2\big(\mut^m_{t^m_{i + 1}}\big)$, respectively.
        Then it follows that
        \begin{align*}
            & ~ \Lc^\P(X_0, W^m_{t^m_{i + 1} \wedge \cdot}, B^m_{t^m_{i + 1} \wedge \cdot},
                \mut^m_{i + 1},
                \taut^m_{i + 1})
            \\ ~ = & ~
            \Lc^\P
            (X_0, W^m_{t^m_{i + 1} \wedge \cdot}, B^m_{t^m_{i + 1} \wedge \cdot},
                \mut^m_{i + 1},
                G^\tau_{i + 1}(X_0, W^m_{t^m_{i + 1} \wedge \cdot},
                B^m_{t^m_{i + 1} \wedge \cdot},
                    \mut^m_{i + 1}, (\taut^m_j)_{j = 1}^{i}, \gamma^W_{i + 1}))
            \\ ~ = & ~
            \Lc^\P
            (X_0, W^m_{t^m_{i + 1} \wedge \cdot}, B^m_{t^m_{i + 1} \wedge \cdot},
                \mu^m_{i + 1},
                G^\tau_{i + 1}(X_0, W^m_{t^m_{i + 1} \wedge \cdot},
                B^m_{t^m_{i + 1} \wedge \cdot},
                    \mu^m_{i + 1}, (\tau^m_j)_{j = 1}^{i}, V^m_{i+1}))
            \\ ~ = & ~
            \Lc^\P(X_0, W^m_{t^m_{i + 1} \wedge \cdot}, B^m_{t^m_{i + 1} \wedge \cdot},
                \mu^m_{t^m_{i + 1}},
                \tau^m_{i + 1}).
        \end{align*}
        Now we complete the proof of \eqref{eq:joint_law_inductive}.
        Further, we define $\taut^m := \sum_{i = 1}^{m}\taut^m_i$, and rewrite \eqref{eq:joint_law_inductive} as
        \begin{equation}\label{eq:joint_law}
            \Lc^\P(X_0, W^m_{t^m_i \wedge \cdot}, B^m_{t^m_i \wedge \cdot}, \mut^m_i, \taut^m \wedge t^m_i)
            ~ = ~
            \Lc^\P(X_0, W^m_{t^m_i \wedge \cdot}, B^m_{t^m_i \wedge \cdot}, \mu^m_{t^m_{i}}, \tau^m \wedge t^m_i).
        \end{equation}
        It is easy to verify that $\taut^m$ is a $\F^{X_0,W, B}$-stopping time.

        \noindent \textbf{Step.3}
        Let us define $\mut^m_t := \mut^m_m \circ (X_{t \wedge \cdot},W,\tau \wedge t)^{-1}$,
        and claim that 
        \begin{equation}\label{eq:law_equivalence_m}
            \Lc^\P(\Xt^m, W^m, B^m, \mut^m_T, \taut^m)
            ~ = ~
            \Lc^\P(X^m, W^m, B^m, \mu^m_T, \tau^m).
        \end{equation}
        Since $\Xt^m$ and $X^m$ are the solutions of SDE \eqref{eq:SDE_strong_discrete} and \eqref{eq:SDE_weak_discrete}, respectively,
        Assumption \ref{ass:coefficient_reward} implies that
        there exists a Borel measurable function
        $H: [0,T] \x \R^n \x \Cc^d \x \Cc^\ell \x \Pc_p(\Cc^n \x \Cc^d \x [0,T]) \x [0,T] \longrightarrow \Cc^n$ such that
        \begin{equation*}
            X^m_{t \wedge \cdot}
            ~ = ~
            H(t, X_0, W^m_{t\wedge \cdot}, B^m_{t\wedge \cdot}, \mu^m_t, \tau^m \wedge t),
        \end{equation*}
        and by \eqref{eq:joint_law_inductive}, one has that
        \begin{equation*}
            \Xt^m_{t \wedge \cdot}
            ~ = ~
            H(t, X_0, W^m_{t\wedge \cdot}, B^m_{t\wedge \cdot}, \mut^m_t, \taut^m \wedge t).
        \end{equation*}
        Then we can conclude that
        \begin{equation*}
            \Lc^\P(\Xt^m, W^m, B^m, \mut^m_T, \taut^m)
            ~ = ~
            \Lc^\P(X^m, W^m, B^m, \mu^m_T, \tau^m).
        \end{equation*}
        \textbf{Step.4}
        It remains to verify that for all $t \in [0,T]$,
        \begin{equation*}
            \mut^m_t = \Lc^\P(\Xt^m_{t\wedge\cdot}, W^m, \taut^m \wedge t|B).
        \end{equation*}
        In fact, for all $\phi \in C_b(\Cc^n \x [0,T])$ and $\varphi \in C_b(\Cc^\ell \x \Pc(\Cc^n \x [0,T]))$,
        it follows that
        \begin{align*}
            & ~
                \E^\P[\langle\phi,\mut^m_t\rangle\varphi(B^m, \mut^m_T)]
            ~ = ~
                \E^\P[\langle\phi,\mu^m_t\rangle\varphi(B^m, \mu^m_T)]
            \\ = & ~
                \E^\P[\phi(X^m_{t\wedge \cdot}, W^m, \tau^m \wedge t)\varphi(B^m, \mu^m_T)]
            ~ = ~
                \E^\P[\phi(\Xt^m_{t\wedge \cdot}, W^m, \taut^m \wedge t)\varphi(B^m, \mut^m_T)]
            \\ = & ~
                \E^\P[\E^\P[
                \phi(\Xt^m_{t\wedge \cdot}, W^m, \taut^m \wedge t)|B^m, \mut^m_T]
                \varphi(B^m, \mut^m_T)],
        \end{align*}
        where the first and third inequalities follow by \eqref{eq:law_equivalence_m}, the second one by \eqref{eq:consistency_m},
        i.e.
        \begin{equation*}
            \mut^m_t
            ~ = ~
            \Lc^\P(\Xt^m_{t\wedge\cdot}, W^m, \taut^m \wedge t|B^m, \mut^m_T).
        \end{equation*}
        Recall that 
        $\mut^m_i$ is $\sigma(\gamma^B_1, \cdots, \gamma^B_i, B^m_{t^m_i \wedge \cdot})$-measurable,
        $\taut^m_i$ is $\sigma(X_0, \mut^m_i, W_{t^m_i \wedge \cdot}, B^m_{t^m_i\wedge\cdot})$-measurable,
        for all $i = 1, \cdots, m$.
        More importantly, for each $i = 1, \cdots, m$,
        there exists some Borel measurable function 
        $ 
        G^B_i : \Cc^\ell \x \Pc_p(\Cc^n \x \Cc^d \x [0,T]) \x \Pc_p(\Cc^n \x \Cc^d \x [0,T]) \longrightarrow \Cc^\ell
        $,
        such that
        $$
        B_{it^m_1/m} - B_{(i - 1)t^m_1/m}
        :=
        G^B_i(B^m_{t^m_i \wedge \cdot}, \mut^m_{i - 1}, \mut^m_{i}).
        $$
        That means conditioning on $B^m, \mut^m_T$ is equivalent to conditioning on $B$.
        We can then conclude that \eqref{eq:consistency_m_strong} holds true.
    \end{proof}

\subsection{Propagation of chaos for stopped McKean-Vlasov SDE}

    In this subsection, we aim to prove that under proper conditions, it follows that
    \begin{equation*}
        V_S(\nu) \le \varliminf_{N \to \infty}V^N_S(\nu_1, \cdots, \nu_N).
    \end{equation*}
    Our main tool is a strong propagation of chaos result for stopped McKean-Vlasove SDE in the sense that for any strong solution of the stopped McKean-Vlasov SDE, there exists a sequence of empirical law of strong solutions of the large population stopped SDEs given by \eqref{eq:N_SDE} that converges to its joint law in $\Wc_p$.

    \begin{proposition}\label{prop:propagation_of_chaos}
        Let Assumption \ref{ass:coefficient_reward} holds true.
        For any $\P \in \Pc_S(\nu)$ with a sequence of $(\nu_i)_{i \in \N} \subset \Pc_{p}(\R^n)$ such that for some $\eps > 0$,
        \begin{equation*}
            \sup_{N}
            \frac{1}{N}\sum_{i = 1}^{N}
            \int_{\R^n}
                |x|^{p + \eps}
            v_i(dx)
            < +\infty,
            \quad
            \lim_{N \to \infty}
            \Wc_p\Big(\frac{1}{N}\sum_{i = 1}^{N}\nu_i, \nu\Big)
            ~ = ~
            0,
        \end{equation*}
        if there exists some continuous function $\varphi: \R^n \x \Cc^d \x \Cc^\ell \longrightarrow [0,T]$, such that $\tau = \varphi(X_0, W, B)$ $\P$-a.s.,
        then
        \begin{equation*}
            \lim_{N \to \infty}
            \E^{\P^0}\bigg[
                \Wc_p\Big(\frac{1}{N}\sum_{i = 1}^{N}\delta_{(X^{i, N, \taub^{N}}, W^i, \tau^i)}, \Lc^\P((X, W, \tau)|B)\Big)
            \bigg]
            ~ = ~
            0,
        \end{equation*}
        where $\tau^i := \varphi(X^i_0, W^i, B)$, $\taub^{N} := (\tau^1, \cdots, \tau^N)$ and $X^{N, \taub^{N}} = (X^{1, N, \taub^{N}}, \cdots, X^{N, N, \taub^{N}})$ is the unique strong solution to the large population stopped SDE given by \eqref{eq:N_SDE}.

        Consequently, it holds that
        \begin{equation*}
            \lim_{N \to \infty}\Wc_p(\P_N, \P) ~ = ~ 0,
        \end{equation*}
        where $\{\P_N\}_{N \in \N_+}$ are given as in \eqref{eq:P_N_construct} with $\taub^{N} = (\tau^1, \cdots, \tau^N)$ above.
    \end{proposition}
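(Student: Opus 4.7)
The plan is to follow the classical coupling strategy for propagation of chaos, carefully adapted to handle the conditional law with common noise $B$ and the stopping. Because $\P \in \Pc_S(\nu)$ with $\tau = \varphi(X_0, W, B)$, uniqueness of strong solutions ensures that $X$ is a measurable functional of $(X_0, W, B)$, so the target random measure $\tilde\mu := \Lc^\P((X,W,\tau)|B)$ is a deterministic Borel function of the path of $B$ and can be evaluated on $(\Om^0, \Fc^0, \P^0)$.

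First, I would introduce auxiliary iid-type processes. On $(\Om^0, \Fc^0, \P^0)$, for each $i$ let $\Xh^i$ be the unique strong solution of the SDE \eqref{eq:N_SDE} in which the empirical measure $\mu^{N,\taub^N}$ is replaced by the frozen conditional law $\tilde\mu_s$ and the stopping time is $\tau^i = \varphi(X^i_0, W^i, B)$. Conditionally on $B$, the triples $(\Xh^i, W^i, \tau^i)$ are independent, and each is the image of $\nu_i \otimes \Wc^d$ under the $B$-indexed solution map. Since the same construction with $\nu$ in place of $\nu_i$ reproduces exactly $\tilde\mu$, denoting by $\tilde\mu^i$ the conditional law of $(\Xh^i, W^i, \tau^i)$ given $B$, the assumed convergence $\frac{1}{N}\sum \nu_i \to \nu$ in $\Wc_p$ propagates through the Lipschitz solution map to give $\frac{1}{N}\sum_{i=1}^N \tilde\mu^i \longrightarrow \tilde\mu$ in $\Wc_p$, a.s. in $B$.

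Next, I would invoke a conditional law of large numbers in Wasserstein distance. Setting $\muh^N := \frac{1}{N}\sum_{i=1}^N \delta_{(\Xh^i, W^i, \tau^i)}$, conditional on $B$ the summands are independent with average law $\frac{1}{N}\sum \tilde\mu^i$. The uniform moment bound $\sup_N \frac{1}{N}\sum_i \int |x|^{p+\eps}\nu_i(dx) < \infty$, combined with the $L^p$ estimates for the SDE, yields a uniform $L^{p+\eps}$ bound on $(\Xh^i, W^i, \tau^i)$, which is what upgrades a conditional version of Varadarajan's theorem to $\Wc_p$-convergence:
\begin{equation*}
    \E^{\P^0}\bigl[\Wc_p^p(\muh^N, \tilde\mu)\bigr] \longrightarrow 0.
\end{equation*}
Then I would run a standard Gronwall comparison: using Assumption \ref{ass:coefficient_reward}(i), BDG, and the triangle inequality $\Wc_p^p(\mu^{N,\taub^N}_s, \tilde\mu_s) \le C\,\Wc_p^p(\mu^{N,\taub^N}_s, \muh^N_s) + C\,\Wc_p^p(\muh^N_s, \tilde\mu_s)$ together with the particle-level bound $\Wc_p^p(\mu^{N,\taub^N}_s, \muh^N_s) \le \frac{1}{N}\sum_j \|X^{j,N}_{s\wedge\cdot} - \Xh^j_{s\wedge\cdot}\|^p$, one obtains after averaging in $i$ and applying Gronwall:
\begin{equation*}
    \frac{1}{N}\sum_{j=1}^N \E^{\P^0}\Bigl[\sup_{t \le T}|X^{j,N}_t - \Xh^j_t|^p\Bigr] \le C\, e^{CT}\, \E^{\P^0}\Bigl[\int_0^T \Wc_p^p(\muh^N_s, \tilde\mu_s)\, ds\Bigr] \longrightarrow 0.
\end{equation*}
The first assertion of the proposition then follows from the triangle inequality
\begin{equation*}
    \Wc_p\Bigl(\tfrac{1}{N}\textstyle\sum_i \delta_{(X^{i,N},W^i,\tau^i)},\, \tilde\mu\Bigr) \le \Bigl(\tfrac{1}{N}\textstyle\sum_i \|X^{i,N} - \Xh^i\|^p\Bigr)^{1/p} + \Wc_p(\muh^N, \tilde\mu).
\end{equation*}

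For the concluding claim $\Wc_p(\P_N, \P) \to 0$, I would note that $\P_N$ can be rewritten by tower property as $\E^{\P^0}\bigl[\frac{1}{N}\sum_i \delta_{(X^{i,N}, W^i, B, \mu^{N,\taub^N}, \tau^i)}\bigr]$, so a joint coupling of $(X^{i,N}, W^i, \tau^i)$ with its iid surrogate $(\Xh^i, W^i, \tau^i)$ together with $\Wc_p(\mu^{N,\taub^N}, \tilde\mu) \to 0$ in $L^1$ identifies the limit as $\E^{\P^0}[\tilde\mu \otimes \delta_B \otimes \delta_{\tilde\mu}]$, which is exactly $\P$ by definition of a strong stopped rule. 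The main obstacle is the Wasserstein conditional LLN under common noise: the target measure is genuinely random in $B$ and the $(\Xh^i, W^i, \tau^i)$ are not identically distributed because of the varying $\nu_i$, so one cannot directly cite the classical iid Sznitman-type result and must instead combine independence conditional on $B$ with the uniform $L^{p+\eps}$ moments to pass from weak to $\Wc_p$ convergence.
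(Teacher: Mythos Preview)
Your overall architecture matches the paper exactly: introduce auxiliary particles $\Xh^i$ (the paper writes $\Xo^{i,\tau^i}$) solving the SDE with the frozen target measure $\tilde\mu$, prove that their empirical measure converges to $\tilde\mu$ in expected $\Wc_p$, and then close the gap to the interacting system $X^{i,N}$ by a Gronwall comparison and the triangle inequality. The Gronwall step and the final deduction of $\Wc_p(\P_N,\P)\to 0$ are carried out in the paper essentially as you describe.

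The one place where the paper proceeds differently is the step you flag as ``the main obstacle'', namely $\E^{\P^0}[\Wc_p^p(\muh^N,\tilde\mu)]\to 0$. You propose a direct argument: propagate $\frac1N\sum\nu_i\to\nu$ through the $B$-indexed solution map to get $\frac1N\sum\tilde\mu^i\to\tilde\mu$ a.s.\ in $B$, and then invoke a conditional, non-iid Varadarajan/LLN together with the $L^{p+\eps}$ bound. The paper instead avoids any conditional LLN statement and argues by compactness: it shows tightness and $\Wc_p$-uniform integrability of the joint laws $\{\Lc^{\P^0}(\muh^N,\tilde\mu)\}_N$ on $\Pc_p(\Pc_p^2(\Cc^n\times\Cc^d\times[0,T]))$, and then identifies any limit point with $\Lc^{\P^0}(\tilde\mu,\tilde\mu)$ via a moment computation (testing against products $\prod_j\langle h_j,\cdot\rangle$), using conditional independence given $B$ and the representation through a single auxiliary process with initial law $\frac1N\sum\nu_i$. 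Your route is closer to the classical Sznitman picture and is conceptually cleaner once the conditional non-iid LLN is in hand; the paper's compactness-plus-identification route is heavier but sidesteps the need to formulate and prove that LLN under common noise, which is precisely the point where a direct citation fails.
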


    \begin{proof}
        For $\tau^i = \varphi(X^i_0, W^i, B)$, we define a sequence of stopped SDEs $(\Xo^{i,\tau^{i}})_{i \in \N}$ as follows:
        for each $i \in \N$,
        \begin{equation}
            \begin{split}
            \Xo^{i,\tau^i}_t
            ~ = ~
            X^i_0 ~ + ~ \int_{0}^{t \wedge \tau^i}b(s, \Xo^{i,\tau^i}_{s\wedge \cdot}, \mu_s) dt
            ~ & + ~ \int_{0}^{t \wedge \tau^i}\sigma(s, \Xo^{i,\tau^i}_{s\wedge \cdot}, \mu_s) dW^i_s
            \\~ & + ~ \int_{0}^{t \wedge \tau^i}\sigma_0(s, \Xo^{i,\tau^i}_{s\wedge \cdot}, \mu_s) dB_s,
            ~ t \in [0,T],
            \end{split}
        \end{equation}
        where $\mu_s := \Lc^\P((X_{s \wedge \cdot}, W, \tau \wedge s)|B_{s \wedge \cdot})$.

        For simplicity,
        we may write $\muo^N := \frac{1}{N}\sum_{i = 1}^{N}\delta_{(\Xo^{i,\tau^i}, W^i, \tau^i)}$,
        then we claim that
        \begin{align}\label{eq:approximation_copy}
            & ~
            \lim_{N \to \infty}
            \E^{\P^0}\bigg[
                \Wc^p_p\Big(\muo^N, \Lc^\P(X, W, \tau|B)\Big)
            \bigg]
            ~ = ~
            0,
            \\ \label{eq:approximation_N_copy}
            & ~
            \lim_{N \to \infty}
            \E^{\P^0}\bigg[
                \Wc^p_p\Big(\mu^{N, \taub^N}_T, \Lc^\P(X, W, \tau|B) \Big)
            \bigg]
            ~ = ~
            0,
        \end{align}
        where 
        \begin{align*}
            \mu^{N, \taub^N}_T
            ~ := ~
            \frac{1}{N}\sum_{i = 1}^{N}\delta_{(X^{i, N, \taub^{N}}, W^i, \tau^i)}
        \end{align*}

        For the proof of \eqref{eq:approximation_copy},
        it is sufficient to prove that in the space $\Pc_p(\Pc^2_p(\Cc^n \x \Cc^d \x [0,T]))$ equipped with Wasserstein-$p$ distance,
        where $\Pc^2_p(\Cc^n \x \Cc^d \x [0,T])$ is equipped with $1$-product metric.
        \begin{equation}\label{eq:local_approximation_copy}
            \lim_{N \to \infty}
            \Wc_p
            \Big(\Lc^{\P^0}(\muo^N,\mu_T),\Lc^{\P^0}(\mu_T,\mu_T)\Big) ~ = ~ 0.
        \end{equation}
        In fact, we can observe that
        \begin{align*}
            \E^{\P^0}\bigg[
                \Wc^p_p\Big(\muo^N, \Lc^\P(X, W, \tau|B)\Big)
            \bigg]
            ~ = ~
            \int_{\Pc^2_p(\Cc^n \x \Cc^d \x [0,T])}
                \Wc^p_p(\mu,\nu)
            \Lc^{\P^0}(\muo^N,\mu_T)
            (d\mu,d\nu),
        \end{align*}
        and that 
        \begin{align*}
            \Wc^p_p(\mu,\nu) 
            \le 
            \big(\Wc_p(\mu,\delta_{(0,0,0)}) + \Wc_p(\nu,\delta_{(0,0,0)})\big)^p
            ~ \le ~
            1 + \big(\Wc_p(\mu,\delta_{(0,0,0)}) + \Wc_p(\nu,\delta_{(0,0,0)})\big)^p.
        \end{align*}
        Here we let $\varphi = \Wc^p_p$ to apply $(iv)$ in Theorem 7.12 of \cite{Villani2021}.
        
        The proof of \eqref{eq:local_approximation_copy} is divided into two steps, 
        where the first one is the relatively compactness of $\{\Lc^{\P^0}(\muo^N,\mu_T)\}_{N \in \N_+}$,
        and the second one is that any limit point of the sequence $\{\Lc^{\P^0}(\muo^N,\mu_T)\}_{N \in \N_+}$ is identical to $\Lc^{\P^0}(\mu_T,\mu_T)$.

        For the relatively compactness of $\{\Lc^{\P^0}(\muo^N,\mu_T)\}_{N \in \N_+}$, 
        we first prove its tightness,which is equivalent to the tightness of all its marginal measures, and consider their mean measures $\{m\Lc^{\P^0}(\muo^N)\}_{N \in \N_+} \subset \Pc_p(\Cc^n \x \Cc^d \x [0,T])$,
        where for each $\P \in \Pc_p(\Pc^2_p(\Cc^n \x \Cc^d \x [0,T]))$,
        its mean measure $m\P \in \Pc_p(\Cc^n \x \Cc^d \x [0,T])$ is defined by
        \begin{equation*}
            m\P(A)
            ~ := ~
            \int_{\Pc_p(\Cc^n \x \Cc^d \x [0,T])}
                \mu(A)
            \P(d\mu),
            ~\mbox{for all}~
            A \in \Bc(\Cc^n \x \Cc^d \x [0,T]).
        \end{equation*}
        Then again, we consider all the marginal measures of $\{m\Lc^{\P^0}(\muo^N)\}_{N \in \N_+}$, and get that
        \begin{align*}
            m\Lc^{\P^0}(\muo^N)
            ~ = ~ \frac{1}{N}\sum_{i = 1}^{N}
                \Lc^{\P^0}(\Xo^{i,\tau^i}, W^i, \tau^i).
        \end{align*}
        Thus, the tightness of $\{m\Lc^{\P^0}(\muo^N)(\Cc^n, \cdot, [0,T])\}_{N \in \N_+}$ and $\{m\Lc^{\P^0}(\muo^N)(\Cc^n, \Cc^d, \cdot)\}_{N \in \N_+}$ hold true 
        since we have that 
        \begin{align*}
            m\Lc^{\P^0}(\muo^N)(\Cc^n, \cdot, [0,T])
            ~ = ~
            \frac{1}{N}\sum_{i = 1}^N\Lc^{\P^0}(W^i),
            \quad
            m\Lc^{\P^0}(\muo^N)(\Cc^n, \Cc^d, \cdot)
            ~ = ~
            \frac{1}{N}\sum_{i = 1}^N\Lc^{\P^0}(\tau^i).
        \end{align*}

        For the tightness of $\{m\Lc^{\P^0}(\muo^N)(\cdot, \Cc^d, [0,T])\}_{N \in \N_+}$ in $\Pc_p(\Cc^n)$,
        one has the standard estimations, for some constant $C > 0$,
        \begin{align*}
            & ~
                \sup_{N \in \N_+}
                m\Lc^{\P^0}(\muo^N)(\cdot, \Cc^d, [0,T])
                \bigg[\sup_{t\in [0,T]}|X_t| \ge a\bigg]
                =
                \sup_{N \in \N_+}
                \frac{1}{N}\sum_{i = 1}^{N}\P^0
                \bigg[
                \sup_{t\in [0,T]}|\Xo^i_t| \ge a\bigg]
            \\ \le &   ~
                \sup_{N \in \N_+}
                    \frac{1}{Na^p}\sum_{i = 1}^{N}\E^{\P^0}\bigg[\sup_{t\in [0,T]}|\Xo^i_t|^p\bigg]
                \le
                \sup_{N \in \N_+}
                    \frac{1}{Na^p}\sum_{i = 1}^{N}C\E^{\P^0}[1 + |X^{i}_0|^p]
            \\ = &   ~    
                \frac{C}{a^p}\bigg(1 + \sup_{N \in \N_+}
                    \frac{1}{N}\sum_{i = 1}^{N}\int_{\R^n}|x|^p\nu_i(dx)\bigg)
        \end{align*}
        and
        \begin{align*}
            & ~
                \sup_{\tau \in \Tc^N}
                m\Lc^{\P^0}(\muo^N)(\cdot, \Cc^d, [0,T])[|X_{(\tau + \delta)\wedge T} - X_\tau| \ge a]
            \\ \le &  ~
                \sup_{\tau \in \Tc^N}
                    \frac{1}{Na^p}\sum_{i = 1}^{N}E^{\P^0}[|\Xo^i_{(\tau + \delta)\wedge T} - \Xo^i_\tau|^p]
            \\ \le &  ~
            \frac{C}{a^p}\bigg(1 + \sup_{N \in \N_+}
            \frac{1}{N}\sum_{i = 1}^{N}\int_{\R^n}|x|^p\nu_i(dx)\bigg)\delta.
        \end{align*}
        Then by Aldous' criterion (see Theorem 16.10 in Billingsley \cite{Billingsley}),  we prove the tightness of $\{m\Lc^{\P^0}(\muo^N)(\cdot, \Cc^d, [0,T])\}_{N \in \N_+}$ in $\Pc_p(\Cc^n)$.

        Therefore, we have the tightness of $\{\Lc^{\P^0}(\muo^N)\}_{N \in \N_+}$, 
        thus tightness of $\{\Lc^{\P^0}(\muo^N,\mu_T)\}_{N \in \N_+}$, whose relatively compactness in $\Pc_p(\Pc^2_p(\Cc^n \x \Cc^d \x [0,T]))$ will immediately hold true if we have the uniformly integrability of $\{\Lc^{\P^0}(\muo^N,\mu_T)\}_{N \in \N_+}$ in the sense that
        \begin{align*}
            & \lim_{a \to \infty}
                \sup_{N \in \N_+}
                \E^{\Lc^{\P^0}(\muo^N,\mu_T)}\bigg[\big(\Wc_p(\mu_1,\delta_0) + \Wc_p(\mu_2,\delta_0)\big)^p
                \mathds{1}_{\{(\Wc_p(\mu_1,\delta_0) + \Wc_p(\mu_2,\delta_0)) \ge a\}}\bigg]
            \\ = & 
            \lim_{a \to \infty}
                \sup_{N \in \N_+}
                \E^{\P^0}
                \bigg[\big(\Wc_p(\muo^N,\delta_0) + \Wc_p(\mu_T,\delta_0)\big)^p
                \mathds{1}_{\{(\Wc_p(\muo^N,\delta_0) + \Wc_p(\mu_T,\delta_0)) \ge a\}}\bigg]
            ~ = ~ 0.
        \end{align*}
        In fact, one has the estimation for some constant $C>0$,
        \begin{align*}
            & ~
            \sup_{N \in \N_+}
                \E^{\P^0}
                \bigg[\big(\Wc_p(\muo^N,\delta_0) + \Wc_p(\mu_T,\delta_0)\big)^p
                \mathds{1}_{\{(\Wc_p(\muo^N,\delta_0) + \Wc_p(\mu_T,\delta_0)) \ge a\}}\bigg]
            \\ \le & ~
            \sup_{N \in \N_+}
                \frac{C}{a^\eps}\E^{\P^0}
                [\Wc^{p + \eps}_p(\muo^N,\delta_0)] 
                + 
                C\E^{\P^0}[\Wc^{p}_p(\mu_T,\delta_0)\mathds{1}_{\{\Wc_p(\mu_T,\delta_0) \ge a\}}]
            \\ \le & ~
            \sup_{N \in \N_+}
                \frac{C}{a^\eps}\E^{\P^0}
                \Big[\|\Xo^i\|^{p + \eps} + \|W^i\|^{p + \eps} + |\tau^i|^{p + \eps}\Big]
                +
                C\E^{\P^0}[\Wc^{p}_p(\mu_T,\delta_0)\mathds{1}_{\{\Wc_p(\mu_T,\delta_0) \ge a\}}]
            \\ \le & ~
                \frac{C}{a^\eps}\bigg(1 + \sup_{N \in \N_+}
                \frac{1}{N}\sum_{i = 1}^{N}\int_{\R^n}|x|^{p + \eps}\nu_i(dx)\bigg)
                + C\E^{\P^0}[\Wc^{p}_p(\mu_T,\delta_0)\mathds{1}_{\{\Wc_p(\mu_T,\delta_0) \ge a\}}].
        \end{align*}
        Thus, we can assume WLOG that there exists some $\Lc^{\P^0}(\muo^\infty,\mu_T) \in \Pc_p(\Pc^2_p(\Cc^n \x \Cc^d \x [0,T]))$ such that
        \begin{equation*}
            \lim_{N \to \infty}
            \Wc_p(\Lc^{\P^0}(\muo^N,\mu_T),\Lc^{\P^0}(\muo^\infty,\mu_T))
            ~ = ~
            0,
        \end{equation*}
        and it remains to verify that $\Lc^{\P^0}(\muo^\infty,\mu_T) = \Lc^{\P^0}(\mu_T,\mu_T)$,
        which, by Proposition A.3 in \cite{DjetePossamaiTan}, is equivalent to that for any $k \in \N_+$, and $f, h_1, \cdots, h_k \in C_b(\Cc^n \x \Cc^d \x [0,T])$, one has that
        \begin{align*}
            &\int_{\Pc^2(\Cc^n \x \Cc^d \x [0,T])}
                \prod_{i = 1}^k \langle h_i, \mu_1 \rangle
                \langle f, \mu_2 \rangle
                \Lc^{\P^0}(\muo^\infty,\mu_T)(d\mu_1,d\mu_2)
            \\~ = &~
            \int_{\Pc^2(\Cc^n \x \Cc^d \x [0,T])}
            \prod_{i = 1}^k 
                \langle h_i, \mu \rangle
                \langle f, \mu_2 \rangle
                \Lc^{\P^0}(\mu_T,\mu_T)(d\mu_1,d\mu_2).
        \end{align*}
        In fact, it is sufficient to prove the claim for the case $k = 2$, since the proof for arbitrary $k$ is identical, then it follows that
        \begin{align*}
            & ~
            \int_{\Pc^2(\Cc^n \x \Cc^d \x [0,T])}
                \prod_{i = 1}^2 
                \langle h_i, \mu_1 \rangle
                \langle f, \mu_2 \rangle
                \Lc^{\P^0}(\muo^\infty,\mu_T)(d\mu_1,d\mu_2)
            \\ = & ~
            \lim_{N \to \infty}
            \int_{\Pc^2(\Cc^n \x \Cc^d \x [0,T])}
                \prod_{i = 1}^2 
                \langle h_i, \mu_1 \rangle
                \langle f, \mu_2 \rangle
                \Lc^{\P^0}(\muo^N,\mu_T)(d\mu_1,d\mu_2)
            \\ = & ~
            \lim_{N \to \infty}
                \frac{1}{N^2}\sum_{i,j = 1}^{N}
                \E^{\P^0}\Big[
                    h_1(\Xo^{i,\tau^i}, W^i, \tau^i)
                    h_2(\Xo^{j,\tau^j}, W^j, \tau^j)
                    \E^\P[f(X,W,\tau)|B]\Big]
            \\ = & ~
            \lim_{N \to \infty}
            \frac{1}{N^2}\sum_{i,j = 1}^{N}
            \E^{\P^0}\Big[
                \E^{\P^0}\Big[h_1(\Xo^{i,\tau^i}, W^i, \tau^i)|B\Big]
                \E^{\P^0}\Big[h_2(\Xo^{j,\tau^j}, W^j, \tau^j)|B\Big]
                \E^\P[f(X,W,\tau)|B]\Big]
            \\ = & ~
            \lim_{N \to \infty}
            \int_{\Pc^2(\Cc^n \x \Cc^d \x [0,T])}
                \prod_{j = 1}^2 
                \langle h_j, \mu_1 \rangle
                \langle f, \mu_2 \rangle
                \Lc^{\P^0}
                    \bigg(\frac{1}{N}\sum_{i = 1}^{N}\Lc^{\P^0}(\Xo^{i,\tau^i}, W^i, \tau^i|B),
                    \mu_T\bigg)(d\mu_1,\mu_2).
        \end{align*}
        Let $\Xo_{0,N}$ be a $\R^n$-valued random variable on $(\Om, \Fc_0, \P)$ with $\Lc^{\P}(\Xo_{0,N}) = \frac{1}{N}\sum_{i = 1}^{N}\nu_i$, independent of $(W, B)$. 
        Then if we denote by $\tau^N_N := \varphi(\Xo_{0,N}, W^i, B)$, one has that
        \begin{align*}
            \Lc^{\P^0}
                \bigg(\frac{1}{N}\sum_{i = 1}^{N}\Lc^{\P^0}(\Xo^{i,\tau^i}, W^i, \tau^i|B),
                \mu_T\bigg)
            ~ = ~
            \Lc^{\P}
                \big(\Lc^{\P}(\Xo^{\tau^N_N}, W, \tau^N_N|B),
                \mu_T\big),
        \end{align*}
        where $\Xo^{\tau^N_N}$ is the unique strong solution to the following SDE:
        \begin{equation*}
            \begin{split}
            \Xo^{\tau^N_N}_t
            ~ = ~
            \Xo_{0,N} ~ + ~ \int_{0}^{t \wedge \tau^N_N}b(s, \Xo^{\tau^N_N}_{s\wedge \cdot}, \mu_s) dt
            ~ & + ~ \int_{0}^{t \wedge \tau^N_N}\sigma(s, \Xo^{\tau^N_N}_{s\wedge \cdot}, \mu_s) dW_s
            \\~ & + ~ \int_{0}^{t \wedge \tau^N_N}\sigma_0(s, \Xo^{\tau^N_N}_{s\wedge \cdot}, \mu_s) dB_s,
            ~ t \in [0,T].
            \end{split}
        \end{equation*}
        Thus, it follows that
        \begin{align*}
            & ~
            \int_{\Pc^2(\Cc^n \x \Cc^d \x [0,T])}
                \prod_{j = 1}^2 
                \langle h_j, \mu_1 \rangle
                \langle f, \mu_2 \rangle
                \Lc^{\P^0}(\muo^\infty,\mu_T)(d\mu_1,d\mu_2)
            \\ = & ~
            \lim_{N \to \infty}
            \int_{\Pc^2(\Cc^n \x \Cc^d \x [0,T])}
                \prod_{j = 1}^2 
                \langle h_j, \mu_1 \rangle
                \langle f, \mu_2 \rangle
                \Lc^{\P}
                    \big(\Lc^{\P}(\Xo^{\tau^N_N}, W, \tau^N_N|B),
                    \mu_T\big)
                (d\mu_1,d\mu_2)
            \\ = & ~
            \lim_{N \to \infty}
                \E^{\P}\bigg[\prod_{j = 1}^2 \E^{\P}[h_j(\Xo^{\tau^N_N}, W, \tau^N_N)|B]
                \E^{\P}[f(X,W,\tau)|B]\bigg]
            \\ = & ~
            \E^{\P}\bigg[\prod_{j = 1}^2 \E^{\P}[h_j(X, W, \tau)|B]\E^{\P}[f(X,W,\tau)|B]\bigg]
            \\ = & ~
            \int_{\Pc^2(\Cc^n \x \Cc^d \x [0,T])}
                \prod_{j = 1}^2 
                \langle h_j, \mu_1 \rangle
                \langle f, \mu_2 \rangle
                \Lc^{\P^0}(\mu_T,\mu_T)(d\mu_1,d\mu_2),
        \end{align*}
        where the third equality follows from the dominated convergence theorem and that the sequence $\{\Lc^{\P}(\Xo^{\tau^N_N}, W, \tau^N_N|B)\}_{N \in \N_+}$ of probability measures converges weakly to $\Lc^{\P}(X,W,\tau|B)$, $\P$-a.s. as $N$ tends to $\infty$, which is a consequence of stable topology techniques if for all $\phi \in C_b(\Cc^n \x \Cc^d \x [0,T])$, we rewrite $\phi(\Xo^{\tau^N_N}, W, \tau^N_N)$ as a function of $\Xo_{0,N}$, $W$ and $B$ and observe that the function is continuous w.r.t. $\Xo_{0,N}$.
        
        For the proof of \eqref{eq:approximation_N_copy}, by a classic argument of SDE (very similar to the argument in Lemma \ref{lemma:equivalence_approximation} but easier), it follows that, for some constant $C > 0$ and each $i \in \N_+$,
        \begin{align*}
            & ~
            \E^{\P^0}\bigg[
                \sup_{t \in [0,T]}\big|\Xo^i_t - X^{i,N,\taub^{N}}_t\big|^p
                \bigg]
            ~ \le ~
            C\E^{\P^0}\bigg[\int_{0}^{T}\Wc^p_p(\mu^{N,\taub^{N}}_s, \mu_s)ds\bigg],
        \end{align*}
        where recall that for each $t \in [0,T]$,
        $$
            \mu^{N,\taub^{N}}_t
            ~ := ~
            \frac{1}{N}\sum_{i = 1}^{N}
            \delta_{\big(X^{i,N,\taub^{N}}_{t\wedge \cdot}, W^i, \tau^i\wedge t\big)},
        $$
        Again, recall that
        $
            \muo^N := \frac{1}{N}\sum_{i = 1}^{N}\delta_{(\Xo^{i,\tau^i}, W^i, \tau^i)},
        $
        so the difference of $\muo^N$ and $\mu^{N,\taub^{N}}_T$ lies only in the first marginal.
        Thus, we have the estimation that
        \begin{align*}
            & ~
                \E^{\P^0}\bigg[
                    \Wc^p_p\Big(\mu^{N,\taub^N}_T, \Lc^\P((X, W, \tau)|B) \Big)
                \bigg]
            \\ \le & ~
                C\E^{\P^0}\bigg[
                    \Wc^p_p\Big(\muo^N, \Lc^\P((X, W, \tau)|B)\Big)
                \bigg]
                +
                C\E^{\P^0}\bigg[
                    \Wc^p_p\Big(\mu^{N,\taub^N}_T, \muo^N\Big)
                \bigg]
            \\ \le & ~
                C\E^{\P^0}\bigg[
                    \Wc^p_p\Big(\muo^N, \Lc^\P((X, W, \tau)|B)\Big)
                \bigg]
                +
                C\E^{\P^0}\bigg[
                    \sup_{t \in [0,T]}\big|\Xo^i_t - X^{i,N,\taub^{N}}_t\big|^p
                    \bigg]
            \\ \le & ~
                C\E^{\P^0}\bigg[
                    \Wc^p_p\Big(\muo^N, \Lc^\P((X, W, \tau)|B)\Big)
                \bigg]
                +
                C\E^{\P^0}\bigg[\int_{0}^{T}\Wc^p_p(\mu^{N,\taub^{N}}_s, \mu_s)ds\bigg].
        \end{align*}
        Thus, by Grownwall's lemma, it follows that for some constant $C > 0$,
        \begin{equation*}
            \E^{\P^0}\bigg[
                    \Wc^p_p\Big(\mu^{N,\taub^N}_T, \Lc^\P((X, W, \tau)|B) \Big)
                \bigg]
            \le
            C\E^{\P^0}\bigg[
                    \Wc^p_p\Big(\muo^N, \Lc^\P((X, W, \tau)|B)\Big)
                \bigg].
        \end{equation*}
        Letting $N$ tends to $\infty$ on both sides, one can conclude the proof of \eqref{eq:approximation_N_copy} by \eqref{eq:approximation_copy}.
    \end{proof}

    \begin{proposition}\label{prop:limit_value_function}
        Let Assumption \ref{ass:coefficient_reward} holds true.
        For any $\nu \in \Pc_p(\R^n)$ with a sequence of $(\nu_i)_{i \in \N} \subset \Pc_p(\R^n)$ such thatfor some $\eps > 0$,
        \begin{equation*}
            \sup_{N}
            \frac{1}{N}\sum_{i = 1}^{N}
            \int_{\R^n}
                |x|^{p + \eps}
            v_i(dx)
            < +\infty,
            \quad
            \lim_{N \to \infty}
            \Wc_p\Big(\frac{1}{N}\sum_{i = 1}^{N}\nu_i, \nu\Big)
            ~ = ~
            0.
        \end{equation*}
        Then, it holds that
        \begin{equation*}
            V_S(\nu) \le \varliminf_{N \to \infty}V^N_S(\nu_1, \cdots, \nu_N),
            ~
            V_S(\nu) \le \varliminf_{N \to \infty}V_S\bigg(\frac{1}{N}\sum_{i = 1}^{N}\nu_i\bigg).
        \end{equation*}
    \end{proposition}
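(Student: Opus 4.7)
The plan is to derive both inequalities from Proposition \ref{prop:propagation_of_chaos} via the lower semi-continuity of $\P \mapsto J(\P)$ under $\Wc_p$-convergence. Since $f, g$ are bounded and lower semi-continuous by Assumption \ref{ass:coefficient_reward}, any sequence with $\Wc_p(\P_N, \P) \to 0$ satisfies $\varliminf_N J(\P_N) \ge J(\P)$, by the Portmanteau-type characterization for bounded lower semi-continuous functionals under Wasserstein convergence. For the first inequality I fix $\P \in \Pc_S(\nu)$ and aim to construct admissible $N$-player strategies $\taub^N \in (\Tc^N)^N$ whose associated probabilities $\P_N$ from \eqref{eq:P_N_construct} satisfy $\Wc_p(\P_N, \P) \to 0$; then $V^N_S(\nu_1, \ldots, \nu_N) \ge J_N(\taub^N) = J(\P_N)$, and taking the supremum over $\P \in \Pc_S(\nu)$ after passing to the limit yields the first inequality. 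The second follows by the analogous construction, with $\P_N \in \Pc_S\big(\tfrac{1}{N}\sum_{i=1}^N \nu_i\big)$ playing the role of the approximation.

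Proposition \ref{prop:propagation_of_chaos} applies only when $\tau = \varphi(X_0, W, B)$ with $\varphi$ continuous, so the technical heart of the argument is a density result: every $\P \in \Pc_S(\nu)$ is a $\Wc_p$-limit of strong stopped rules $\P^{(k)} \in \Pc_S(\nu)$ with $\tau^{(k)} = \varphi_k(X_0, W, B)$ for continuous $\varphi_k: \R^n \x \Cc^d \x \Cc^\ell \to [0,T]$. Granted this, applying Proposition \ref{prop:propagation_of_chaos} to each $\P^{(k)}$ delivers $\taub^{N,k} := (\varphi_k(X^i_0, W^i, B))_{i=1}^N \in (\Tc^N)^N$ and probabilities $\P_{N,k}$ with $\Wc_p(\P_{N,k}, \P^{(k)}) \to 0$; lower semi-continuity then gives $J(\P^{(k)}) \le \varliminf_N V^N_S(\nu_1, \ldots, \nu_N)$, and a diagonal argument in $k$ concludes the first inequality. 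For the second inequality, repeat the construction with $X_{0,N}$ of law $\tfrac{1}{N}\sum_{i=1}^N \nu_i$ independent of $(W, B)$, set $\tau_N := \varphi_k(X_{0,N}, W, B)$, and solve the stopped McKean-Vlasov SDE to obtain $\P_N \in \Pc_S\big(\tfrac{1}{N}\sum_{i=1}^N \nu_i\big)$ with $J(\P_N) \to J(\P^{(k)})$, by the stability of strong solutions under convergence of the initial law exactly as in the final step of the proof of Proposition \ref{prop:propagation_of_chaos}.

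The main obstacle is the density claim itself, because a typical $\F^{X_0,W,B}$-stopping time admits a measurable representation $\tau = \phi(X_0, W, B)$ but need not admit a continuous one. I would handle this in two stages. First, discretize $\tau$ along a partition $\pi^m$ of $[0,T]$ in the spirit of Lemma \ref{lemma:equivalence_approximation}, producing a stopping time $\tau^m$ taking values in $\{t^m_0, \ldots, t^m_m\}$; the associated strong stopped rule $\P^m$ satisfies $\Wc_p(\P^m, \P) \to 0$ by the $L^p$-stability estimates developed there. Second, write $\tau^m = \sum_i t^m_i \mathds{1}_{\{\tau^m = t^m_i\}}$, approximate each indicator $\mathds{1}_{\{\tau^m = t^m_i\}}$ in $L^1(\P)$ by a bounded continuous function of $X_0$ and finitely many increments of $(W, B)$ up to $t^m_i$ via a cylinder-set and Lusin-type argument, and combine the resulting pieces into a single continuous $\varphi_k$. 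The associated random time remains an $\F^{X_0,W,B}$-stopping time by the time-local nature of the smoothing, and the $L^p$-stability of the stopped McKean-Vlasov SDE upgrades the approximation to $\Wc_p$-convergence of the joint laws, completing the density argument.
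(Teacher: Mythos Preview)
Your overall strategy matches the paper's: reduce to Proposition~\ref{prop:propagation_of_chaos} via a density argument showing that strong stopped rules with $\tau = \varphi(X_0,W,B)$ for \emph{continuous} $\varphi$ are $\Wc_p$-dense in $\Pc_S(\nu)$, then pass to the limit using lower semi-continuity of $J$. The second inequality is handled exactly as you describe, by changing the initial law to $\tfrac{1}{N}\sum_i \nu_i$ and invoking stability of the stopped McKean--Vlasov SDE. The gap lies in the second stage of your density construction.

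You propose to write $\tau^m = \sum_i t^m_i \mathds{1}_{\{\tau^m = t^m_i\}}$, replace each indicator by a continuous $\Fc^{X_0,W,B}_{t^m_i}$-measurable function $\psi_i$, and ``combine the resulting pieces into a single continuous $\varphi_k$.'' But the natural combination $\sum_i t^m_i \psi_i$ is \emph{not} a stopping time: deciding whether $\sum_i t^m_i \psi_i \le t^m_j$ requires the values of all the $\psi_i$, including those with $i > j$, which are only $\Fc^{X_0,W,B}_{t^m_i}$-measurable. Your assertion that ``the associated random time remains an $\F^{X_0,W,B}$-stopping time by the time-local nature of the smoothing'' is precisely the step that fails, and it is where the real difficulty sits: one must produce a function of $(X_0,W,B)$ that is simultaneously continuous \emph{and} non-anticipating.

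The paper resolves this with a different construction. Rather than approximating the indicators $\mathds{1}_{\{\tau^m = t^m_i\}}$, it approximates the nondecreasing adapted process $L_t = \mathds{1}_{\{\tau < t\}}$ by piecewise-constant adapted processes $\Lh^n$, built from continuous $\Fc^{X_0,W,B}_{t^n_i}$-measurable approximants $\psi^n_i$ of $L_{t^n_i}$ (rendered monotone in $i$ by taking running maxima). The approximating stopping time is then the first hitting time of level $\tfrac12$ by the \emph{perturbed} process $\Lh^n_t + \tfrac{t}{3T}$. The linear perturbation forces strict increase, so the map $v \mapsto \inf\{t : v_t \ge \tfrac12\}$ is continuous at $\Lh^n + \tfrac{\cdot}{3T}$; this yields continuity of the resulting $\varphi_n$ in $(X_0,W,B)$. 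At the same time, a hitting time of an adapted process is automatically an $\F^{X_0,W,B}$-stopping time. This hitting-time-of-a-perturbed-process device is the missing ingredient in your sketch.
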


    \begin{proof}
        For any $\P \in \Pc_S(\nu)$, there exists a Borel measurable function $\varphi: \R^n \x \Cc^d \x \Cc^\ell \longrightarrow [0,T]$, such that $\tau = \varphi(X_0, W, B)$ $\P$-a.s.

        Then there exists a sequence of continuous functions $\{\varphi_m: \R^n \x \Cc^d \x \Cc^\ell \longrightarrow [0,T]\}_{m \in \N_+}$, such that
        for each $m \in \N_+$, $\tau_m = \varphi_m(X_0, W, B)$ is a $\F^{X_0, W, B}$-stopping time and
        \begin{equation*}
            \lim_{m \to \infty} \tau_m = \tau,
            ~
            \P-
            \mbox{a.s.}
        \end{equation*}
        In fact, we consider a $\F^{X_0, W, B}$-adapted, nondecreasing process $L := \mathds{1}_{\{\tau < \cdot\}}$, and a sequence of partitions $\{\pi_n\}_{n \in \N}$ of $[0,T]$ with $\pi_n : 0 = t^n_0 < t^n_1 < \cdots < t^n_{n} = T$, $|\pi_n| \le 1/n$, as well as their corresponding $\F^{X_0, W, B}$-adapted, nondecreasing processes $\{L^n\}_{n \in \N_+}$, where for each $n \in \N_+$,
        \begin{equation*}
            L^n_t 
            ~ := ~
            \sum_{i = 0}^{n - 1}
                L_{t^n_i}\mathds{1}_{(t^n_i,t^n_{i + 1}]}(t),
            ~\mbox{for all}~
            t \in [0,T].
        \end{equation*}
        Then there exists a family of continuous functions $\{\psi^n_i: \R^n \x \Cc^d \x \Cc^\ell \longrightarrow [0,1]\}_{n \in \N_+, i \in \{0, \cdots, n\}}$ such that 
        \begin{equation*}
            \E\Big[\Big|\psi^n_i(X_0,W_{t^n_i \wedge \cdot}, B_{t^n_i \wedge \cdot}) 
            -  
            L_{t^n_i}\Big|\Big] < \frac{1}{n^2}.
        \end{equation*}
        WLOG, we may assume that for each $n \in \N_+$,
        $
            \psi^n_0 \le \psi^n_1 \le \cdots, \le \psi^n_n,
        $
        otherwise we can replace $\psi^n_i$ by $\max_{j \in \{0, \cdots, i\}}\psi^n_j$.
        Then one may construct a sequence of $\F^{X_0, W, B}$-adapted, nondecreasing processes $\{\Lh^n\}_{n \in \N_+}$ by
        \begin{equation*}
            \Lh^n_t 
            ~ := ~
            \sum_{i = 0}^{n - 1}
                \psi^n_i(X_0,W_{t^n_i \wedge \cdot}, B_{t^n_i \wedge \cdot})\mathds{1}_{(t^n_i,t^n_{i + 1}]}(t),
            ~\mbox{for all}~
            t \in [0,T].
        \end{equation*}
        Thus, one obtains the estimation for each $\eps > 0$,
        \begin{align*}
            ~
            \P(d_L(L^n,\Lh^n) \ge \eps)
            ~
            \le &~
            \frac{1}{\eps}\E\Big[\max_{i \in \{0, \cdots, n\}}\Big|\psi^n_i(X_0,W_{t^n_i \wedge \cdot}, B_{t^n_i \wedge \cdot}) 
                -  
                L_{t^n_i}\Big|\Big]
            \\  ~
            \le &~
            \frac{1}{\eps}\sum_{i = 0}^{n}\E\Big[\Big|\psi^n_i(X_0,W_{t^n_i \wedge \cdot}, B_{t^n_i \wedge \cdot}) 
                -  
                L_{t^n_i}\Big|\Big]
              ~
            = ~ 
            \frac{n+ 1}{\eps n^2},
        \end{align*}
        where $d_L$ is the L\'evy metric on $[0,T]$, and concludes that
        \begin{equation*}
            \lim_{n \to \infty}\P(d_L(L,\Lh^n) \ge \eps)
            \le
            \lim_{n \to \infty}\P(d_L(L,L^n) \ge \eps/2)
            +
            \lim_{n \to \infty}\P(d_L(L^n,\Lh^n) \ge \eps/2)
            =
            0.
        \end{equation*}
        Therefore, if we define $\tau^v := \inf\{t\ge 0: L \ge \frac{1}{2}\}$ for nondecreasing functions $v$ on $[0,T]$ with $v_0 = 0$,
        and note that $v \longmapsto \tau^v$ is continuous at those $v$ that are strictly increasing,
        then there exists a subsequence $\{n_k\}_{k \in \N_+}$ of $\{n\}_{n \in \N_+}$,
        such that 
        \begin{equation*}
            \lim_{k \to \infty}\tau^{\Lh^{n_k}+\frac{\cdot}{3T}}
            ~ = ~
            \tau^{L+\frac{\cdot}{3T}}
            ~ = ~
            \tau,
            ~\P-\mbox{a.s.,}
        \end{equation*}
        Finally, we can conclude the claim for $\tau$, since for each $n \in \N_+$, 
        $\tau^{\Lh^n+\frac{\cdot}{3T}}$ is a $\F^{X_0,W,B}$-stopping time and there exists a continuous function $\varphi_n: \R^n \x \Cc^d \x \Cc^\ell \longrightarrow [0,T]$ such that $\tau^{\Lh^n+\frac{\cdot}{3T}} = \varphi_n(X_0, W, B)$. 

        Then by Assumption \ref{ass:coefficient_reward}, there exists a unique solution $X^{\tau_m}$ of the McKean-Vlasov SDE
        \begin{equation*}
            \begin{split}
            X^{\tau_m}_t
            ~ = ~
            X_0 + \int_{0}^{t \wedge \tau_m}b(s, X^{\tau_m}_{s\wedge \cdot}, \mu^{\tau_m}_s) dt
            & ~ + \int_{0}^{t \wedge \tau_m}\sigma(s, X^{\tau_m}_{s\wedge \cdot}, \mu^{\tau_m}_s) dW_s
            \\ & ~ + \int_{0}^{t \wedge \tau_m}\sigma_0(s, X^{\tau_m}_{s\wedge \cdot}, \mu^{\tau_m}_s) dB_s,
            ~
            t \in [0,T],
            ~
            \P-\mbox{a.s.}
            \end{split}
        \end{equation*}
        where for each $s \in [0,T]$, $\mu^{\tau_m}_s := \Lc^{\P}((X^{\tau_m}_{s\wedge \cdot}, W, \tau_m \wedge s)|B)$.
        By a similar argument in Lemma \ref{lemma:equivalence_approximation}, it follows that
        \begin{equation}\label{eq:approximation_tau_prop}
            \lim_{m \to \infty}
            \E^{\P}\bigg[\sup_{t \in [0,T]}|X^{\tau_m}_t - X_t|^p + \Wc_p(\mu^{\tau_m}_T,\mu_T)\bigg]
            ~ = ~ 0.
        \end{equation}
        Then it is straightforward to verify that $\P_m := \Lc^\P (X^{\tau_m}, W, B, \mu^{\tau_m}_T, \tau_m) \in \Pc_S(\nu)$.
        By Proposition \ref{prop:propagation_of_chaos}, for each $m \in \N_+$, it holds that
        \begin{equation}\label{eq:approximation_P_m_prop}
            \lim_{N \to \infty}
            \E^{\P^0}\bigg[
                \Wc_p\Big(\frac{1}{N}\sum_{i = 1}^{N}\delta_{(X^{i, N, \taub^{N, m}}, W^i, \tau^i_m)}, \Lc^{\P_m}((X, W,  \tau)|B)\Big)
            \bigg]
            ~ = ~
            0,
        \end{equation}
        where $\tau^i_m := \varphi_m(X^i_0, W^i, B)$, $\taub^{N, m} = (\tau^1_m, \cdots, \tau^N_m)$
        and $X^{N, \taub^{N, m}} = (X^{1, N, \taub^{N, m}}, \cdots, X^{N, N, \taub^{N, m}})$ is the unique strong solution to the large population stopped SDE given by \eqref{eq:N_SDE} with $\taub^{N, m}$.
        For simplicity, we write $\mu^{N, \taub^{N,m}}_T$ for $\frac{1}{N}\sum_{i = 1}^{N}\delta_{(X^{i, N, \taub^{N, m}}, \tau^i_m)}$.
        Therefore,
        \begin{align*}
             ~
             J(\P)
             ~ = & ~
                \E^{\P}\bigg[
                \int_{0}^{t \wedge \tau}f(s,X_{s\wedge \cdot}, \mu_s) ds
                + g(\tau, X_{\cdot}, \mu_T)\bigg]
            \\ ~ = & ~
                \E^{\P}\bigg[
                \int_{0}^{T}
                    \langle f(s, x, \mu_s)\mathds{1}_{[0,\theta)}(s), \mu_s(dx, dw, d\theta) \rangle
                    ds
                    +
                \langle g(\cdot, \mu_T), \mu_T \rangle
                \bigg]
            \\ ~ \le & ~
                \varliminf_{m \to \infty}
                \E^{\P_m}\bigg[
                \int_{0}^{T}
                    \langle f(s, x, \mu_s)\mathds{1}_{[0,\theta]}(s), \mu_s(dx, dw, d\theta) \rangle
                    ds
                    +
                \langle g(\cdot, \mu_T), \mu_T \rangle
                \bigg]
            \\ ~ \le & ~
                \varliminf_{N \to \infty}
                \varliminf_{m \to \infty}
                \E^{\P^0}\bigg[
                \int_{0}^{T}
                    \langle f(s, x, \mu^{N, \taub^{N,m}}_s)\mathds{1}_{[0,\theta]}(s), \mu^{N, \taub^{N,m}}_s(dx, dw, d\theta) \rangle
                    ds
                    \\
                    & ~~~~~~~~~~~~~~~~~~~~~~~~~~~~~~~~~~~~~~~~~~~~~~~~~~~~~~~~+
                \langle g(\cdot, \mu^{N, \taub^{N,m}}_T), \mu^{N, \taub^{N,m}}_T \rangle
                \bigg]
            \\ ~ = & ~
                \varliminf_{N \to \infty}
                \varliminf_{m \to \infty}
                \frac{1}{N}\sum_{i = 1}^{N}
                \E^{\P^0}\bigg[
                \int_{0}^{\tau^i_m}
                    f(s, X^{i,N,\taub^{N,m}}_{s\wedge \cdot}, \mu^{N, \taub^{N,m}}_s)\mathds{1}_{[0,\tau^i_m]}(s)
                    ds
                    \\
                    & ~~~~~~~~~~~~~~~~~~~~~~~~~~~~~~~~~~~~~~~~~~~~~~~~~~~~~+
                g(\tau^i_m, X^{i,N,\taub^{N,m}}, \mu^{N, \taub^{N,m}}_T)
                \bigg]
            \\ ~ \le & ~
                \varliminf_{N \to \infty}V_S^N(\nu_1, \cdots, \nu_N),
        \end{align*}
        where the first inequality holds by Assumption \ref{ass:coefficient_reward}, \eqref{eq:approximation_tau_prop}, Portmanteau Theorem (see Theorem 3.1 in \cite{FittePR})
        and the second one by \eqref{eq:approximation_P_m_prop}, Portmanteau Theorem (see Theorem 3.1 in \cite{FittePR}).
        Then we obtain the first inequality by arbitrariness of $\P$.

        The second inequality holds true similarly if we consider the following McKean-Vlasov SDE:
        \begin{align*}
            \Xh^{\tau}_t
            ~ = ~
            \Xh_0 + \int_{0}^{t \wedge \tau_m}b(s, \Xh^{\tau}_{s\wedge \cdot}, \muh^{\tau}_s) dt
            & ~ + \int_{0}^{t \wedge \tau_m}\sigma(s, \Xh^{\tau}_{s\wedge \cdot}, \muh^{\tau}_s) dW_s
            \\ & ~ + \int_{0}^{t \wedge \tau_m}\sigma_0(s, \Xh^{\tau}_{s\wedge \cdot}, \muh^{\tau}_s) dB_s,
            ~
            t \in [0,T],
            ~
            \P-\mbox{a.s.}
        \end{align*}
        where $\Xh_0$ is a $\R^n$-valued random variable indepedent of $(W,B)$ such that $\Lc(\Xh_0) = \frac{1}{N}\sum_{i = 1}^{N}\nu_i$, 
        and for each $s \in [0,T]$, $\muh^{\tau}_s := \Lc^{\P}((\Xh^{\tau}_{s\wedge \cdot}, W, \tau \wedge s)|B)$.
    \end{proof}

\subsection{Limit behavior of stopped rules with finite population}

In the subsection, we prove the relative compactness of finite population stopped rules and that the limit of its any convergent subsequence lies in $\Pc_W(\nu)$.

\begin{proposition}\label{prop:weak_main_theorem}
    Let Assumption \ref{ass:coefficient_reward} holds true, 
    and $\{\nu_i\}_{i \in \N_+} \subset \Pc_p(\R^n)$ be such that for some $\eps > 0$,
    $$
        \sup_{N \ge 1}\frac{1}{N}\sum_{i = 1}^{N}\int_{\R^n}|x|^{p + \eps}\nu_i(dx)
        ~ < ~
        +\infty,
    $$
    and $\{\P_N\}_{N \in \N_+}$ be given as in \eqref{eq:P_N_construct}.

    Then it holds that $\{\P_N\}_{N \in \N_+}$ is relatively compact under $\Wc_p$ and the limit of any convergent subsequence $\{\P_{N_m}\}_{m \in \N_+}$ of $\{\P_N\}_{N \in \N_+}$ belongs to $\Pc_W(\nu)$ for some $\nu \in \Pc_p(\R^n)$ with
    \begin{equation*}
        \lim_{m \to \infty}\frac{1}{N_m}\sum_{i = 1}^{N_m}\nu_i ~ = ~\nu.
    \end{equation*}
\end{proposition}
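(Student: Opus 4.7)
The plan is to establish $\Wc_p$-relative compactness of $\{\P_N\}$ by combining tightness with a uniform $(p+\eps)$-integrability bound, and then to verify each condition of Definition \ref{def:strong_weak_stopping} along an arbitrary $\Wc_p$-convergent subsequence $\P_{N_m} \to \P_\infty$. For tightness, I would first derive uniform $(p+\eps)$-moment bounds on $X^{i,N,\taub^N}$ using the linear-growth consequence of Assumption \ref{ass:coefficient_reward}(i), Burkholder--Davis--Gundy, and Gronwall applied to \eqref{eq:N_SDE}; the $(p+\eps)$-moment hypothesis on $\{\nu_i\}$ is exactly what is needed to propagate through. Weak tightness of the $X$-, $W$-, and $B$-marginals then follows from Aldous' criterion and standard Brownian tightness, the $\tau$-marginal is automatically tight on $[0,T]$, and tightness of the $\mu$-marginal in $\Pc_p(\Cc^n \times \Cc^d \times [0,T])$ reduces to tightness of its mean measure exactly as in the proof of Proposition \ref{prop:propagation_of_chaos}. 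Uniform $(p+\eps)$-integrability upgrades weak to $\Wc_p$-convergence, and a further extraction ensures $\frac{1}{N_m}\sum_{i=1}^{N_m} \nu_i \to \nu$ in $\Wc_p$ for some $\nu \in \Pc_p(\R^n)$, identifying the limiting initial law.

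Identifying $\P_\infty$ as a weak stopped rule proceeds by passing each item of Definition \ref{def:strong_weak_stopping} through the limit. For (i) and (iii), I would use the martingale-problem characterization of the stopped McKean--Vlasov SDE and of $(B,W)$: the relevant martingale identities hold under $\P_N$ by construction of the strong system \eqref{eq:N_SDE}, and Lipschitz continuity of $(b,\sigma,\sigma_0)$ together with the uniform $(p+\eps)$-integrability provides the equi-integrability needed to pass them through the $\Wc_p$-limit against bounded continuous $\F_s$-cylindrical test functionals. Condition (ii) follows from the prelimit independence of $B$ from $(X_0^i, W^i)_{i \le N}$ and continuity of the projection maps. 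For (iv), the key observation is that under $\P_N$ the variable $\mu$ \emph{is} the empirical measure of $(X^{j,N,\taub^N}, W^j, \tau^{j,N})_{j=1,\ldots,N}$, so these coordinates are conditionally exchangeable given $(B, \mu^{N,\taub^N})$; a standard exchangeability and de Finetti-type argument (cf.\ \cite{DjetePossamaiTan}) then yields $\mu_t = \Lc^{\P_\infty}((X_{t\wedge\cdot}, W, \tau\wedge t) \mid \Gc_t)$ in the limit.

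The principal obstacle I anticipate is the $(H)$-hypothesis (v). At the finite-$N$ level each $\tau^{i,N}$ is measurable with respect to $\F^N$, which is generated by $\{X_0^j, W^j\}_{j \le N}$ and $B$; combined with the independence of these Brownian motions, a direct computation shows that conditioning on $\Gc_T$ and on $\Gc_t$ agree on any event in $\Fc_t \vee \Fc^W_T$ at the prelimit level. The delicate step is to preserve \eqref{eq:H_hypothesis} across the $\Wc_p$-limit, because the filtration $\F^\mu$ depends on $\mu$ through the projection maps $\pi_t$ introduced in Section 2 and the stopping time enters the dynamics discontinuously via $\mathds{1}_{[0,\tau]}$. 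I would dispatch this by testing the integrated form of \eqref{eq:H_hypothesis} against bounded continuous cylindrical functions evaluated at times in $\P_\infty$-continuity sets, in the spirit of the Portmanteau-type arguments used repeatedly in Proposition \ref{prop:propagation_of_chaos}.
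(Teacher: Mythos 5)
Your outline for relative compactness matches the paper's approach closely: moment bounds via Gronwall and BDG, tightness via Aldous' criterion and the mean-measure device for the $\mu$-coordinate, and uniform $(p+\eps)$-integrability to upgrade to $\Wc_p$. Conditions (i) and (iii) via the martingale-problem formulation are also handled the same way.

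The gaps are in (ii), (iv), and especially (v). For (ii), "prelimit independence of $B$ from $(X_0^i,W^i)$" is not enough: under $\P_N$ the canonical $\mu$ is the empirical measure $\mu^{N,\taub^N}_T$, which \emph{does} depend on all of $(X_0^j, W^j)_j$ at finite $N$, so $(B,\mu)$ is not prelimit-independent of $(X_0,W)$. The correct route, which the paper uses, is to show the covariance vanishes in the limit via the (conditional) law of large numbers applied to $\frac{1}{N}\sum_i\varphi(X_0^i,W^i)$. For (iv), invoking exchangeability and a de Finetti-type argument is both unnecessary and potentially inapplicable here: the particles $X^{i,N,\taub^N}$ are \emph{not} exchangeable in general because the initial laws $\nu_i$ can differ. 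What the paper actually uses is the elementary algebraic identity that averaging a functional of the $i$-th coordinate over $i=1,\dots,N$ equals integrating that functional against $\mu^N_t$; since $\mu^N_t$ is itself a coordinate under $\P_N$, this identity is a statement about bounded continuous functions of the canonical process and passes through the $\Wc_p$-limit directly, giving \eqref{eq:consistency_weak} without any exchangeability.

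The main gap is your plan for (v). Your claim that the $(H)$-hypothesis holds "at the prelimit level" for the mixture $\P_N$ is not self-evident — conditional expectation under a mixture does not reduce to the mixture of conditional expectations — and even granting it, conditional-expectation identities do not pass through weak limits in the way that ordinary moments do, so "testing against cylindrical test functions in $\P_\infty$-continuity sets" is not a concrete argument. The paper sidesteps this entirely: it writes the target identity $\E^{\P_\infty}[\E^{\P_\infty}[\phi\varphi_1\mid\Gc_T]\varphi_2] = \E^{\P_\infty}[\E^{\P_\infty}[\phi\varphi_1\mid\Gc_t]\varphi_2]$ as an equality of \emph{unconditional} expectations, uses the same $i$-averaging identity to rewrite the left-hand side as $\lim_N\E^{\P^0}[\langle\phi,\mu^N_t\rangle\varphi_1\varphi_2]$, passes to the limit, and then unwinds the right-hand side via the tower property and the already-established consistency \eqref{eq:consistency_weak} (so $\langle\phi,\mu_t\rangle = \E^{\P_\infty}[\phi\mid\Gc_t]$). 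No prelimit $(H)$-hypothesis and no continuity-set bookkeeping is needed. You should adopt this route; as written, your step (v) does not close.
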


\begin{remark}
    The difference between Proposition \ref{prop:weak_main_theorem} and Theorem \ref{thm:main} lies in that the limit of any convergent subsequence $\{\P_{N_m}\}_{m \in \N_+}$ of $\{\P_N\}_{N \in \N_+}$ belongs to $\Pc_W(\nu)$ instead of $\Pc^*_W(\nu)$.
    Thus, it is a weaker version of our main theorem.
\end{remark}

\begin{proof}
    In the first part of the proof, we focus on the relatively compactness of $\{\P_{N}\}_{N \in \N_+} \subset \Pc_p(\Om)$. We will prove its tightness at first, then its relatively compactness. Since the tightness of a family of joint measures is equivalent to the tightness of all their marginal measures, we will consider all the marginal measures of $\{\P_{N}\}_{N \in \N_+}$.

    Recall that 
    \begin{equation*}
        \P_N
        ~ := ~
        \frac{1}{N}\sum_{i = 1}^{N}
        \Lc^{\P^0}
        \Big(X^{i,N,\taub^N}, W^i, B, \frac{1}{N}\sum_{i = 1}^{N}\delta_{(X^{i,N,\taub^{N}}, W^i, \tau^{i,N})}, \tau^{i,N}\Big).
    \end{equation*}

    Clearly, $\{\frac{1}{N}\sum_{i = 1}^{N}\Lc^{\P^0}(W^i, B)\}_{N \in \N_+}$ and $\{\frac{1}{N}\sum_{i = 1}^{N}\Lc^{\P^0}(\tau^{i,N})\}_{N \in \N_+}$ is tight.

    For the tightness of $\{\frac{1}{N}\sum_{i = 1}^{N}\Lc^{\P^0}(X^{i,N,\taub^N})\}_{N \in \N_+}$ in $\Pc_p(\Cc^n)$,
    one has the standard estimations, for some constant $C > 0$,
    \begin{align*}
        & ~
            \sup_{N \in \N_+}
                \P_N[\sup_{t\in [0,T]}|X_t| \ge a]
        \\ \le &   ~
            \sup_{N \in \N_+}
                \frac{1}{a^p}E^{\P_{N}}[\sup_{t\in [0,T]}|X_t|^p]
            =
            \sup_{N \in \N_+}
                \frac{1}{Na^p}\sum_{i = 1}^{N}\E^{\P^0}[\sup_{t\in [0,T]}|X^{i,N,\taub^N}_t|^p]
        \\ \le &   ~
            \sup_{N \in \N_+}
                \frac{1}{Na^p}\sum_{i = 1}^{N}C\E^{\P^0}[1 + |X^{i}_0|^p]
            =
            \frac{C}{a^p}\bigg(1 + \sup_{N \in \N_+}
                \frac{1}{N}\sum_{i = 1}^{N}\int_{\R^n}|x|^p\nu_i(dx)\bigg)
    \end{align*}
    and
    \begin{align*}
        ~
            \sup_{\tau \in \Tc^N}
                \P_N[|X_{(\tau + \delta)\wedge T} - X_\tau| \ge a]
        ~\le &  ~
            \sup_{\tau \in \Tc^N}
                \frac{1}{a^p}E^{\P_{N}}[|X_{(\tau + \delta)\wedge T} - X_\tau|^p]
        \\ = & ~
            \sup_{\tau \in \Tc^N}
                \frac{1}{Na^p}\sum_{i = 1}^{N}\E^{\P^0}[|X^{i,N,\taub^N}_{(\tau + \delta)\wedge T}- X^{i,N,\taub^N}_\tau|^p]
            ~ \le ~
            C\delta.
    \end{align*}
    Then by Aldous' criterion (see Theorem 16.10 in Billingsley \cite{Billingsley}),  $\{\frac{1}{N}\sum_{i = 1}^{N}\Lc^{\P^0}(X^{i,N,\taub^N})\}_{N \in \N_+}$ is tight in $\Pc_p(\Cc^n)$.

    \vspace{0.5em}

    For the tightness of $\{\Lc^{\P^0}(\frac{1}{N}\sum_{i = 1}^{N}\delta_{(X^{i,N,\taub^N},W^i, \tau^{i,N})})\}_{N \in \N_+}$
    in $\Pc_p(\Pc_p(\Cc^n \x \Cc^d \x [0,T]))$,
    we consider their mean measures $\{m\Lc^{\P^0}(\frac{1}{N}\sum_{i = 1}^{N}\delta_{(X^{i,N,\taub^N},W^i, \tau^{i,N})})\}_{N \in \N_+} \subset \Pc_p(\Cc^n \x \Cc^d \x [0,T])$,
    where for each $\P \in \Pc_p(\Pc_p(\Cc^n \x \Cc^d \x [0,T]))$,
    its mean measure $m\P \in \Pc_p(\Cc^n \x \Cc^d \x [0,T])$ is defined by
    \begin{equation*}
        m\P(A)
        ~ := ~
        \int_{\Pc_p(\Cc^n \x \Cc^d \x [0,T])}
            \mu(A)
        \P(d\mu),
        ~\mbox{for all}~
        A \in \Bc(\Cc^n \x \Cc^d \x [0,T]).
    \end{equation*}
    Then the tightness of all the mean measures' marginal measures follows by exactly the same argument for the tightness of $\{\frac{1}{N}\sum_{i = 1}^{N}\Lc^{\P^0}(X^{i,N,\taub^N})\}_{N \in \N_+}$, $\{\frac{1}{N}\sum_{i = 1}^{N}\Lc^{\P^0}(W^i, B)\}_{N \in \N_+}$
    and $\{\frac{1}{N}\sum_{i = 1}^{N}\Lc^{\P^0}(\tau^{i,N})\}_{N \in \N_+}$.
%
    Then by (2.5) in Sznitman\cite{Sznitman}, $\{\Lc^{\P^0}(\frac{1}{N}\sum_{i = 1}^{N}\delta_{(X^{i,N,\taub^N},W^i, \tau^{i,N})}\}_{N \in \N_+}$ is tight.

    \vspace{0.5em}

    Now we have the tightness of $\{\P_N\}_{N \in \N_+}$, the relatively compactness of $\{\P_N\}_{N \in \N_+}$  in $\Pc_p(\Om)$ will immediately hold true if we have the uniformly integrability of $\{\P_N\}_{N \in \N_+}$ in the sense that
    \begin{equation}\label{eq:ui_P_N}
        \lim_{a \to \infty}
            \sup_{N \in \N_+}
            \E^{\P_N}[|(X,W,B,\mu,\tau)|^p
            \mathds{1}_{\{|(X,W,B,\mu,\tau)| \ge a\}}]
        ~ = ~ 0,
    \end{equation}
    where for the product space $\Om$, we use $1$-product metric.
    In fact, one has the estimation
    \begin{align*}
        & ~
            \sup_{N \in \N_+}
            \E^{\P_N}\big[|(X,W,B,\mu,\tau)|^p
            \mathds{1}_{\{|(X,W,B,\mu,\tau)| \ge a\}}\big]
        \\ \le & ~
            \sup_{N \in \N_+}
            \frac{1}{a^\eps}\E^{\P_N}\big[|(X,W,B,\mu,\tau)|^{p +\eps}\big]
        \\ \le & ~
            \sup_{N \in \N_+}
            \frac{C}{Na^\eps}\sum_{i = 1}^{N}
            \E^{\P^0}\Big[\|X^{i,N,\taub^N}\|^{p +\eps} + \|W^i\|^{p +\eps} + \|B\|^{p +\eps}
            + \Wc^p_p(\mu^{N,\taub^N}_T,\delta_{(0,0,0)})
            + |\tau^{i,N}|^{p +\eps}\Big]
        \\ \le & ~
            \sup_{N \in \N_+}
            \frac{C}{Na^\eps}\sum_{i = 1}^{N}
            \E^{\P^0}\Big[\|X^{i,N,\taub^N}\|^{p +\eps} + \|W^i\|^{p +\eps} + \|B\|^{p +\eps}
            + |\tau^{i,N}|^{p +\eps}\Big]
        \\ \le & ~
            \frac{C}{a^\eps}\bigg(1 + \sup_{N \in \N_+}
                \frac{1}{N}\sum_{i = 1}^{N}\int_{\R^n}|x|^{p +\eps}\nu_i(dx)\bigg).
    \end{align*}
    Then \eqref{eq:ui_P_N} holds true by taking $a$ to $+\infty$.

    \vspace{0.5em}

    In the second part of the proof, we focus on the limit of any convergent subsequence of $\{\P_N\}_{N \in \N_+}$ and WLOG assume that $\{\P_N\}_{N \in \N_+}$ itself is convergent, i.e. there exists some $\P_{\infty} \in \Pc_p(\Cc^n \x \Cc^d \x \Cc^\ell \x \Pc_p(\Cc^n \x \Cc^d \x [0,T]) \x [0,T])$ such that
    \begin{equation*}
        \lim_{N \to \infty}\Wc_p(\P_N,\P_{\infty}) ~ = ~ 0,
    \end{equation*}
    and hence
    \begin{equation*}
        \lim_{N \to \infty}
        \Wc_p\bigg(
            \frac{1}{N}\sum_{i = 1}^{N}\nu_i,\Lc^{\P_\infty}(X_0)
        \bigg)
        ~ = ~
        0.
    \end{equation*}
    Then we verify that $\P_\infty \in \Pc_W(\nu)$, where hereinafter we denote $\Lc^{\P_\infty}(X_0)$ by $\nu$.

    \vspace{0.5em}

    \noindent \textbf{Proof of $(i)$:}
    First we observe that
    $\Lc^{\P_{\infty}}(B, W) = \lim_{N \to \infty}\Lc^{\P_{N}}(B, W) = \Wc^{d + \ell}$,
    where $\Wc^{d + \ell}$ denotes the $(d+\ell)$-dimensional Wiener measure,
    and next show that $(B, W)$ is a $(\F,\P_{\infty})$-Brownian motion.

    For any $\phi \in C_b(\Om)$ and $\varphi \in C_b(\Cc^d \x \Cc^\ell)$,
    it follows that for any $s, t \in [0,T]$ with $s < t$,
    \begin{align*}
        & ~
            \E^{\P_{\infty}}
                [\phi(X_{s\wedge\cdot}, W_{s\wedge\cdot}, B_{s\wedge\cdot}, \mu_s, \tau \wedge s)
                \varphi(W_{t\wedge \cdot} - W_{s\wedge\cdot}, B_{t\wedge \cdot} - B_{s\wedge\cdot})]
        \\ = & ~
            \lim_{N \to \infty}
            \frac{1}{N}\sum_{i = 1}^{N}\E^{\P^0}
                [\phi(X^{i,N,\taub^N}_{s\wedge\cdot}, W^i_{s\wedge\cdot}, B_{s\wedge\cdot}, \mu^N_s, \tau^{i,N} \wedge s)
                \varphi(W^i_{t\wedge \cdot} - W^i_{s\wedge\cdot}, B_{t\wedge \cdot} - B_{s\wedge\cdot})]
        \\ = & ~
            \lim_{N \to \infty}
            \frac{1}{N}\sum_{i = 1}^{N}\E^{\P^0}
                [\phi(X^{i,N,\taub^N}_{s\wedge\cdot}, W^i_{s\wedge\cdot}, B_{s\wedge\cdot}, \mu^N_s, \tau^{i,N} \wedge s)]
                \E^{\P^0}[\varphi(W^i_{t\wedge \cdot} - W^i_{s\wedge\cdot}, B_{t\wedge \cdot} - B_{s\wedge\cdot})]
        \\ = & ~
            \E^{\P_{\infty}}
                [\phi(X_{s\wedge\cdot}, W_{s\wedge\cdot}, B_{s\wedge\cdot}, \mu_s, \tau \wedge s)]
            \E^{\P_{\infty}}
                [\varphi(W_{t\wedge \cdot} - W_{s\wedge\cdot}, B_{t\wedge \cdot} - B_{s\wedge\cdot})].
    \end{align*}
    Then since the bounded continuous functions $\phi$, $\varphi$ are arbitrary, one has that $(W_{t\wedge \cdot} - W_{s\wedge\cdot}, B_{t\wedge \cdot} - B_{s\wedge\cdot})$ is independent of $\Fc_s$.

    \vspace{0.5em}

    \noindent \textbf{Proof of (ii):}
    It is straightforward to prove that $(B,\mu)$ is $\P_{\infty}$-independent of $(X_0,W)$.
    For any $\phi \in C_b(\Cc^\ell \x \Pc_p(\Cc^n \x \Cc^d \x [0,T]))$ and $\varphi \in C_b(\R^n \x \Cc^d)$, the law of large numbers, implies that
    \begin{align*}
        & ~
            \E^{\P_{\infty}}
                [\phi(B,\mu)\varphi(X_0,W)]
            -
            \E^{\P_{\infty}}[\phi(B,\mu)]
            \E^{\P_{\infty}}[\varphi(X_0,W)]
        \\ = & ~
            \lim_{N \to \infty}
                \frac{1}{N}\sum_{i = 1}^{N}
                \E^{\P^0}
                    [\phi(B,\mu^N_T)\varphi(X^i_0,W^i)]
                -
                \E^{\P^0}[\phi(B,\mu^N_T)]
                \frac{1}{N}\sum_{i = 1}^{N}\E^{\P^0}[\varphi(X^i_0,W^i)]
        \\ = & ~
            \lim_{N \to \infty}
                \E^{\P^0}
                    \bigg[\phi(B,\mu^N_T)
                    \bigg(\frac{1}{N}\sum_{i = 1}^{N}\varphi(X^i_0,W^i) - \frac{1}{N}\sum_{i = 1}^{N}\E^{\P^0}[\varphi(X^i_0,W^i)]\bigg)\bigg]
        \\ = & ~
            0.
    \end{align*}
    Then since the bounded continuous functions $\phi$, $\varphi$ are arbitrary, one has that $(B,\mu)$ is $\P_{\infty}$-independent of $(X_0,W)$.

    \noindent \textbf{Proof of (iii):}
    For the proof of the McKean-Vlasov SDE, the main tool we use is the martingale problem approach.
    By Theorem 4.5.2 in Stroock and Varadhan\cite{StroockVaradhan}, it is sufficient to prove that under $\P_{\infty}$,
    for any $\varphi \in C_b(\R^n \x \R^d \x \R^\ell)$, the process $(M^\varphi_t)_{t \in [0,T]}$ is a $(\F,\P_{\infty})$-martingale,
    where for each $t \in [0,T]$,
    \begin{equation*}
        M^\varphi_t
        ~ := ~
        \varphi(X_t,W_t,B_t)
        -
        \int_{0}^{t}\Lc_s\varphi(X,W,B,\mu,\tau)ds,
    \end{equation*}
    for all $(x,w,b,\theta,m) \in \Om$, $I^\theta_t := \mathds{1}_{[0,\theta]}(t)$,
    \begin{equation*}
        \Lc_t\varphi(x,w,b,m,\theta)
        ~ := ~
        \sum_{i = 1}^{n}
            b_i(t,x,m)I^\theta_t\partial_i\varphi(x_t,w_t,b_t)
        +
        \frac{1}{2}\sum_{i,j = 1}^{n + d + \ell}
            a_{i,j}(t, x, m)I^\theta_t\partial^2_{i,j}\varphi(x_t,w_t,b_t),
    \end{equation*}
    and
    \begin{equation*}
        a_{i,j}(t,x,m)
        ~ := ~
        \begin{bmatrix}
        \sigma & \sigma_0 \\
        I_{d \x d} & 0_{d \x \ell} \\
        0_{\ell \x d} & I_{\ell \x \ell}
        \end{bmatrix}
        \begin{bmatrix}
        \sigma & \sigma_0 \\
        I_{d \x d} & 0_{d \x \ell} \\
        0_{\ell \x d} & I_{\ell \x \ell}
        \end{bmatrix}^T
        (t,x,m).
    \end{equation*}
    Now for any $\phi_s \in C_b(\Om)$, $s, t \in [0,T]$ with $s < t$, we denote $(X^{i,N,\taub^N}_{s\wedge\cdot}, W^i_{s\wedge\cdot}, B_{s\wedge\cdot}, \mu^N_s, \tau^{i,N} \wedge s)$ by $\om^{i,N}_{s \wedge \cdot}$,
    and it follows that
    \begin{align*}
        & ~
            \E^{\P_{\infty}}
            [(M^\varphi_t - M^\varphi_s)\phi(\om_{s \wedge \cdot})]
        \\ = & ~
            \lim_{N \to \infty}
            \frac{1}{N}\sum_{i = 1}^{N}
                \E^{\P^0}
                \Bigg[\int_{s}^{t}
                \bigg(
                \sum_{j = 1}^{d}
                \partial_{j+n}\varphi(X^{i,N,\taub^N}_t,W^i_t,B_t)
                \Big(\sigma(r,X^{i,N,\taub^N}_{r\wedge \cdot},\mu^N_r) dW^i_r\Big)_j
            \\ & ~~~~~~~~~~~~~~~~~~~~~~~~~
                + \sum_{j = 1}^{\ell}
                \partial_{j+n+d}\varphi(X^{i,N,\taub^N}_t,W^i_t,B_t)
                \Big(\sigma_0(r,X^{i,N,\taub^N}_{r\wedge \cdot},\mu^N_r) dB_r\Big)_j
                \bigg)
                \phi(\om^{i,N}_{s \wedge \cdot})\Bigg]
        \\ = & ~
            0.
    \end{align*}

    \noindent \textbf{Proof of (iv):}
    The proof of the consistency condition \eqref{eq:consistency_weak} for $\mu_T$ is also straightforward.
    For any $\phi \in C_b(\Cc^\ell \x \Pc_p(\Cc^n \x \Cc^d \x [0,T]))$,
    $\varphi \in C_b(\Cc^n \x [0,T])$,
    it follows that, for each $t \in [0,T]$,
    \begin{align*}
        & ~
            \E^{\P_\infty}
            [\phi(B_{t \wedge \cdot},\mu_t)\varphi(X_{t\wedge\cdot}, W, \tau \wedge t)]
        \\ = & ~
            \lim_{N \to \infty}\frac{1}{N}\sum_{i = 1}^{N}
            \E^{\P^0}
            [\phi(B_{t \wedge \cdot},\mu^N_t)\varphi(X^{i,N,\taub^N}_{t \wedge \cdot}, W^i, \tau^{i,N} \wedge t)]
        \\ = & ~
            \lim_{N \to \infty}
            \E^{\P^0}
            \bigg[\phi(B_{t \wedge \cdot},\mu^N_t)\int_{\Cc^n  \x [0,T]}\varphi(x)\mu^N_t(dx)\bigg]
        \\ = & ~
            \E^{\P_\infty}
            \bigg[\phi(B_{t \wedge \cdot},\mu_t)\int_{\Cc^n  \x [0,T]}\varphi(x)\mu_t(dx)\bigg].
    \end{align*}
    Then since the functions $\phi$ and $\varphi$ are arbitrary, \eqref{eq:consistency_weak} holds true.

    \noindent \textbf{Proof of (v):}
    The following argument, wchich shows the $(H)$-Hypothesis type condition \eqref{eq:H_hypothesis} holds true for $\P^\infty$, works for $\Lc^{\P}(X, W, \tau)$, but fails for $\Lc^\P(X, \tau)$. 
    This is the reason that we use $\Lc^{\P}(X, W, \tau)$ instead of $\Lc^\P(X, \tau)$ throughout the paper.
    For and $\phi \in C_b(\Cc^n  \x C_b(\Cc^d) \x [0,T])$ and $\varphi_1, \varphi_2 \in C_b(\Cc^\ell \x \Pc_p(\Cc^n \x \Cc^d \x [0,T]))$,
    \begin{align*}
        & ~
            \E^{\P_\infty}[
            \E^{\P_\infty}[\phi(X_{t \wedge \cdot}, W, \tau \wedge t)
            \varphi_1(B_{t \wedge \cdot}, \mu_t)|\Gc_T]
            \varphi_2(B, \mu_T)
            ]
        \\ = & ~
            \lim_{N \to \infty}
            \E^{\P^0}\bigg[
            \frac{1}{N}\sum_{i = 1}^{N}
            \phi(X^{i,N,\taub^N}_{t \wedge \cdot}, W^i, \tau^{i,N} \wedge t)
            \varphi_1(B_{t \wedge \cdot}, \mu^N_t)
            \varphi_2(B, \mu^N_T)
            \bigg]
        \\ = & ~
            \lim_{N \to \infty}
            \E^{\P^0}\bigg[
            \int_{\Cc^{n} \x \Cc^d \x [0,T]}
                \phi(x, w, \theta)
                \mu^N_t(dx, dw, d\theta)
            \varphi_1(B_{t \wedge \cdot}, \mu^N_t)
            \varphi_2(B, \mu^N_T)
            \bigg]
        \\ = & ~
            \E^{\P_\infty}\bigg[
            \int_{\Cc^{n} \x \Cc^d \x [0,T]}
                \phi(x, w, \theta)
                \mu_t(dx, dw, d\theta)
            \varphi_1(B_{t \wedge \cdot}, \mu_t)
            \varphi_2(B, \mu_T)
            \bigg]
        \\ = & ~
            \E^{\P_\infty}\bigg[
            \int_{\Cc^{n} \x \Cc^d \x [0,T]}
                \phi(x, w, \theta)
                \mu_t(dx, dw, d\theta)
            \varphi_1(B_{t \wedge \cdot}, \mu_t)
            \E^{\P_\infty}[\varphi_2(B, \mu_T)|\Gc_t]
            \bigg]
        \\ = & ~
            \E^{\P_{\infty}}[
            \phi(X_{t\wedge \cdot}, W, \tau \wedge t)
            \varphi_1(B_{t \wedge \cdot}, \mu_t)
            \E^{\P_\infty}[\varphi_2(B, \mu_T)|\Gc_t]
            ]
        \\ = & ~
            \E^{\P_{\infty}}[
            \E^{\P_{\infty}}[
            \phi(X_{t\wedge \cdot}, W, \tau \wedge t)
            \varphi_1(B_{t \wedge \cdot}, \mu_t)
            |\Gc_t]
            \varphi_2(B, \mu_T)
            ].
    \end{align*}
    Then since $\phi$, $\varphi_1$, $\varphi_2$ are arbitrary,
    we can conclude the proof.
\end{proof}

\subsection{Proof of main theorems}

\proof \textbf{of Theorem \ref{thm:equivalence_value_function}}
    From the definition of $V_S(\nu)$ and $V_W(\nu)$, it is obvious to deduce that
    \begin{equation*}
        V_S(\nu) \le V_W(\nu).
    \end{equation*}
    On the other hand, for any $\P \in \Pc_W(\nu)$, and $\ell > 0$, by Lemma \ref{lemma:equivalence_approximation} and Lemma \ref{lemma:equivalence_strong_weak_discrete}, there exists a sequence of $\F^{X_0, W, B}$-stopping time $\{\taut^m\}_{m \in \N_+}$ together with a sequence of partitions $\{\pi_m: (t_i^m)_{i = 0}^m\}_{m \in \N_+}$ of $[0,T]$ with $0 = t_0^m < t_1^m < \cdots < t_m^m = T$ such that
    \begin{equation*}
        \lim_{m \to \infty}
        \Wc_p(\Lc^{\P}(\Xt^m, W^m, B^m, \mut^m_T, \taut^m),
        \Lc^{\P}(X, W, B, \mu_T, \tau))
        ~ = ~
        0,
    \end{equation*}
    where $\Xt^m$ is the unique solution of the following McKean-Vlasov SDE
    \begin{equation*}
        \begin{split}
        \Xt^m_t
        ~ = ~
        X_0 + \int_{t^m_1}^{\taut^m \wedge t \vee t^m_1}b(s, \Xt^m_{s\wedge \cdot}, \mut^m_s) dt
        & ~ + \int_{t^m_1}^{\taut^m \wedge t \vee t^m_1}\sigma(s, \Xt^m_{s\wedge \cdot}, \mut^m_s) dW^m_s
        \\ & ~ + \int_{t^m_1}^{\taut^m \wedge t \vee t^m_1}\sigma_0(s, \Xt_{s\wedge \cdot}, \mut^m_s) dB^m_s,
        ~
        t \in [0,T],
        \end{split}
    \end{equation*}
    with for each $t \in [0,T]$, $\mut^m_t$ is defined by
    \begin{equation*}
        \mut^m_t
        ~ := ~
        \Lc^\P((\Xt^m_{t \wedge \cdot}, W^m, \taut^m \wedge t)|B),
        ~
        t\in [0,T].
    \end{equation*}
    Then we define another sequence of processes $\{\Xh^m\}_{m \in \N_+}$ as the unique strong solution of the following McKean-Vlasov SDE
    \begin{equation*}
        \begin{split}
        \Xh^m_t
        ~ = ~
        X_0 + \int_{0}^{t \wedge \taut^m}b(s, \Xh^m_{s\wedge \cdot}, \muh^m_s) dt
        & ~ + \int_{0}^{t \wedge \taut^m}\sigma(s, \Xh^m_{s\wedge \cdot}, \muh^m_s) dW_s
        \\ & ~ + \int_{0}^{t \wedge \taut^m}\sigma_0(s, \Xh^m_{s\wedge \cdot}, \muh^m_s) dB_s,
        ~
        t \in [0,T],
        \end{split}
    \end{equation*}
    with for each $t \in [0,T]$, $\muh^m_t$ is defined by
    \begin{equation*}
        \muh^m_t
        ~ := ~
        \Lc^\P((\Xh^m_{t \wedge \cdot}, W, \taut^m \wedge t)|B),
        ~
        t\in [0,T].
    \end{equation*}
    It is standard to check that $\Lc^{\P}(\Xh^m, W, B, \muh^m_T, \taut^m)$ belongs to $\Pc_S(\nu)$.

    Then by almost the same argument as the proof of \eqref{eq:convergence_law_m} in Lemma \ref{lemma:equivalence_approximation},
    it follows that
    \begin{equation*}
        \lim_{m \to \infty}
        \Wc_p(\Lc^{\P}(\Xt^m, W^m, B^m, \mut^m_T, \taut^m),
        \Lc^{\P}(\Xh^m, W, B, \muh^m_T, \taut^m))
        ~ = ~
        0.
    \end{equation*}
    Therefore, by Assumption \ref{ass:coefficient_reward} that lower semi-continuity conditions of $f$, $g$, and Portmanteau Theorem (see Theorem 3.1 in \cite{FittePR}),
    it holds that
    \begin{equation*}
        J(\P)
        ~ \le ~
        \varliminf_{m \to \infty}
            J(\Lc^{\P}(\Xh^m, W, B, \muh^m_T, \taut^m))
        ~ \le ~
        V_S(\nu).
    \end{equation*}
    Since $\P \in \Pc_W(\nu)$ is arbitrary, one can deduce that $$V_W(\nu) \le V_S(\nu),$$
    and conclude the proof of $V_S(\nu) = V_W(\nu)$.

    When $\ell = 0$, $B$ vanishes, the argument is almost the same.
    The main difference lies in that the definitions of $\mut^m_t$ and $\muh^m_t$ for each $t \in [0,T]$, becomes
    \begin{equation*}
        \mut^m_t
        ~ := ~
        \Lc^\P((\Xt^m_{t \wedge \cdot}, W^m, \taut^m \wedge t)|U^m),
        ~
        \muh^m_t
        ~ := ~
        \Lc^\P((\Xh^m_{t \wedge \cdot}, W, \taut^m \wedge t)|U^m),
        ~
        t\in [0,T],
    \end{equation*}
    where $U^m$ is a $[0,1]^m$-valued uniformly distributed random variable indepedent of $(X_0, W, B)$.
    Moreover, the convergence result becomes
    \begin{equation*}
        \lim_{m \to \infty}
        \Wc_p\bigg(\int_{[0,1]^m}\Lc^{\P^m_u}(\Xt^m, W^m, B^m, \mut^m_T, \taut^m)du,
        \Lc^{\P}(\Xh^m, W, B, \muh^m_T, \taut^m)\bigg)
        ~ = ~
        0,
    \end{equation*}
    where $\{\P^m_u\}_{u \in [0,1]^m}$ is the conditional probability distributions of $\P$ given $U^m$.

    The convexity of $\Pc_W(\nu)$ is easy and standard to check.
    The closedness and relatively compactness of $\Pc_W(\nu)$ can be obtained by a very similar argument of Proposition \ref{prop:weak_main_theorem}.
\endproof

\vspace{0.5em}

\proof \textbf{of Theorem \ref{thm:main}:}
    \noindent \textbf{Proof of $(i)$:}
    For the sequence of empirical measures $\{\P_N\}_{N \in \N_+}$ given in the theorem,
    by Proposition \ref{prop:weak_main_theorem},
    it follows that
    $\{\P_N\}_{N \in \N_+}$ is relatively compact under $\Wc_p$
    and the limit $\P_\infty$ of any convergent subsequence $\{\P_{N_m}\}_{m \in \N_+}$ of $\{\P_N\}_{N \in \N_+}$ belongs to $\Pc_W(\nu)$ for some $\nu \in \Pc_p(\R^n)$ with
    \begin{equation*}
        \lim_{m \to \infty}\frac{1}{N_m}\sum_{i = 1}^{N_m}\nu_i ~ = ~\nu.
    \end{equation*}
    It remains to verify that $\P_{\infty} \in \Pc^*_W(\nu)$.
    By Proposition \ref{prop:limit_value_function}, it holds that
    \begin{align*}
        V_W(\nu)
         \ge 
        J(\P_{\infty})
         = 
        \lim_{m \to \infty}
        J(\P_{N_m})
         &  \ge 
        \varlimsup_{m \to \infty}
        V^{N_m}_S(\nu_1, \cdots, \nu_{N_m})
        \\ & \ge 
        \varliminf_{m \to \infty}
        V^{N_m}_S(\nu_1, \cdots, \nu_{N_m})
         \ge 
        V_S(\nu),
    \end{align*}
    therefore by Theorem \ref{thm:equivalence_value_function}, it follows that $J(\P_{\infty}) = V_W(\nu)$, i.e.
    $\P_{\infty} \in \Pc^*_W(\nu).$

    \vspace{0.5em}

    \noindent \textbf{Proof of $(ii)$:}
    For the proof of the first part, it holds directly from the Proof of $(i)$ that
    \begin{equation*}
        V_S(\nu)
        ~ = ~
        \lim_{N \to \infty}
        V^N_S(\nu_1, \cdots, \nu_N).
    \end{equation*}

    For the proof of the second part,
    let us denote by $\Pc_{S,c}(\nu)$ the collection of probability measures $\P$ in $\Pc_S(\nu)$ such that there exists some continuous function $\varphi: \R^n \x \Cc^d \x \Cc^\ell \longrightarrow [0,T]$, such that $\tau = \varphi(X_0, W, B)$ $\P$-a.s.

    Then by Proposition \ref{prop:propagation_of_chaos}, it holds that for any $\P \in \Pc_{S,c}(\nu)$, there exists a sequence of stopping times $\{\taub^{N} \in (\Tc^N)^N\}_{N \in \N_+}$ such that
    the corresponding sequence of probability measures $\{\P_N\}_{N \in \N_+}$, given by
    \begin{equation*}
        \P_N
        ~ := ~
        \frac{1}{N}\sum_{i = 1}^{N}
        \Lc^{\P^0}
        \Big(X^{i,N,\taub^N}, W^i, B, \frac{1}{N}\sum_{i = 1}^{N}\delta_{(X^{i,N,\taub^{N}}, W^i, \tau^i)}, \tau^i\Big),
    \end{equation*}
    constructed as \eqref{eq:P_N_construct} satisfies that
    \begin{equation*}
        \lim_{N \to \infty}\Wc_p(\P_N, \P) ~ = ~ 0.
    \end{equation*}

    In fact, the above approximation result also holds for any $\P \in \Pc_W(\nu)$, in particular $\P^* \in \Pc^*_W(\nu)$,
    if we notice that, by Theorem \ref{thm:equivalence_value_function}, $\Pc_S(\nu)$ is dense in $\Pc_W(\nu)$.
    and, by the argument in Proposition \ref{prop:limit_value_function},
    $\Pc_{S,c}(\nu)$ is dense in $\Pc_S(\nu)$.

    Then by Assumption \ref{ass:coefficient_reward}, Portmanteau Theorem (see Theorem 3.1 in \cite{FittePR}),
    it holds that
    \begin{equation*}
        \lim_{N \to \infty} J_N(\taub^N)
        ~ = ~
        \lim_{N \to \infty} J(\P_N)
        ~ = ~
        J(\P^*).
    \end{equation*}
    Finally, for each $N \in \N_+$, let us define
    $
        \eps_N
        ~ := ~
        V^N_S(\nu_1, \cdots, \nu_N) - J_N(\taub^N).
    $
    Then one can immediately deduce that
    \begin{equation*}
        \lim_{N \to \infty}
        \eps_N
        ~ = ~
        0.
    \end{equation*}

    \vspace{0.5em}

    \noindent \textbf{Proof of $(iii)$}
    WLOG, we assume that
    \begin{equation*}
        \varlimsup_{N \to \infty}
                \bigg|V^N_S(\nu_1, \cdots, \nu_N) - V_S\Big(\frac{1}{N}\sum_{i = 1}^{N}\nu_i\Big)\bigg|
        ~ = ~
        \lim_{N \to \infty}
                \bigg|V^N_S(\nu_1, \cdots, \nu_N) - V_S\Big(\frac{1}{N}\sum_{i = 1}^{N}\nu_i\Big)\bigg|,
    \end{equation*}
    and that
    \begin{equation*}
        \lim_{N \to \infty}\sum_{i = 1}^{N}\nu_i
        ~ = ~
        \nu,
        ~\mbox{for some}~
        \nu \in \Pc_p(\R^n).
    \end{equation*}
    Then by $(ii)$, it holds that
    \begin{align*}
        & ~
            \lim_{N \to \infty}
                \bigg|V^N_S(\nu_1, \cdots, \nu_N) - V_S\Big(\frac{1}{N}\sum_{i = 1}^{N}\nu_i\Big)\bigg|
        \\ \le & ~
            \lim_{N \to \infty}
                \big|V^N_S(\nu_1, \cdots, \nu_N) - V_S(\nu)\big|
            + \lim_{N \to \infty}
                \bigg|V_S(\nu) - V_S\Big(\frac{1}{N}\sum_{i = 1}^{N}\nu_i\Big)\bigg|
        \\ = & ~
            0.
    \end{align*}
    It remains to verify that
    \begin{equation*}
        \lim_{N \to \infty}
                \bigg|V_S(\nu) - V_S\Big(\frac{1}{N}\sum_{i = 1}^{N}\nu_i\Big)\bigg|
        ~ = ~ 0.
    \end{equation*}
    Let us consider a sequence of probability measures $\{\P^N_W\}$ with $\P^N_W \in \Pc_W\big(\frac{1}{N}\sum_{i = 1}^{N}\nu_i\big)$ 
    such that
    \begin{equation*}
        V_W\bigg(\frac{1}{N}\sum_{i = 1}^{N}\nu_i\bigg) \le J(\P^N_W) + \eps_N,
        ~\mbox{for all}~
        N \in \N_+.
    \end{equation*}
    where $\{\eps_N\}_{N \in \N_+}$ being a sequence of strictly positive real numbers with
    $
        \lim_{N \to \infty}\eps_N = 0.
    $
    Then the relatively compactness of $\{\P^N_W\}$ under $\Wc_p$ follows by a very similar argument of the relatively compactness of $\{\P_N\}_{N \in \N_+}$ in Proposition \ref{prop:weak_main_theorem}.
    Therefore, WLOG we assume that
    \begin{equation*}
        \lim_{N \to \infty}\Wc_p(\P^N_W, \P^\infty_W)
        ~ = ~ 0,
        ~\mbox{for some}~
        \P^\infty_W \in \Pc_W(\nu).
    \end{equation*}
    Then it holds that
    \begin{equation*}
        V_W(\nu)
        \ge
        J(\P^\infty_W)
        =
        \lim_{N \to \infty}
        J(\P^N_W)
        \ge
        \varliminf_{N \to \infty}
        V_W\bigg(\frac{1}{N}\sum_{i = 1}^{N}\nu_i\bigg)
        =
        \varliminf_{N \to \infty}
        V_S\bigg(\frac{1}{N}\sum_{i = 1}^{N}\nu_i\bigg)
        \ge
        V_S(\nu),
    \end{equation*}
    where the last inequality holds by Proposition \ref{prop:limit_value_function}.
\endproof





\end{document}